\newcommand{\F}{{\mathbb{F}}}
\newcommand{\Z}{{\mathbb{Z}}}
\newcommand{\Q}{{\mathbb{Q}}}
\newcommand{\N}{{\mathbb{N}}}
\newcommand{\R}{{\mathbb{R}}}
\newcommand{\C}{{\mathbb{C}}}
\newcommand{\ba}{{\mathbf{a}}}
\newcommand{\bc}{{\mathbf{c}}}
\newcommand{\cE}{{\mathcal{E}}}
\newcommand{\cH}{{\mathcal{H}}}
\newcommand{\cO}{{\mathcal{O}}}
\newcommand{\fX}{{\mathfrak{X}}}
\newcommand{\Irr}{{\operatorname{Irr}}}
\newcommand{\Hom}{{\operatorname{Hom}}}
\renewcommand{\leq}{\leqslant}
\renewcommand{\geq}{\geqslant}
\newcommand{\GAP}{{\sf GAP}}
\newcommand{\MA}{{\sf MeatAxe}}
\newcommand{\IMA}{{\sf IntegralMeatAxe}}
\newcommand{\GL}{{\operatorname{GL}}}
\spnewtheorem{thm}{Theorem}[section]{\bfseries}{\itshape}
\spnewtheorem{prop}[thm]{Proposition}{\bfseries}{\itshape}
\spnewtheorem{lem}[thm]{Lemma}{\bfseries}{\itshape}
\spnewtheorem{cor}[thm]{Corollary}{\bfseries}{\itshape}
\spnewtheorem{conj}[thm]{Conjecture}{\bfseries}{\itshape}
\spnewtheorem{abs}[thm]{\nocaption}{\bfseries}{\rmfamily}
\spnewtheorem{defn}[thm]{Definition}{\bfseries}{\rmfamily}
\spnewtheorem{exmp}[thm]{Example}{\itshape}{\rmfamily}
\spnewtheorem{rem}[thm]{Remark}{\itshape}{\rmfamily}
\spnewtheorem*{sub}{\nocaption}{\bfseries}{\rmfamily}
\newcommand{\subT}[1]{\begin{sub}\hspace*{-0.5em}{\bf #1.}}
\newcommand{\absT}[1]{\begin{abs}{\bf #1.}}
\spnewtheorem{obs}{Observation}{\bfseries}{\itshape}
\begin{document}

\title*{
Invariant bilinear forms on \\
$W$-graph representations and \\
linear algebra over integral domains}
\titlerunning{Invariant bilinear forms} 
\author{Meinolf Geck and J\"urgen M\"uller}
\authorrunning{Meinolf Geck and J\"urgen M\"uller} 
\institute{Meinolf Geck \at IAZ-Lehrstuhl f\"ur Algebra, Universit\"at
Stuttgart, Pfaffenwaldring 57, 70569 Stuttgart, Germany
\email{meinolf.geck@mathematik.uni-stuttgart.de}
\and J\"urgen M\"uller \at Arbeitsgruppe Algebra und Zahlentheorie,
Bergische Universit\"at Wuppertal, Gau{\ss}-Stra{\ss}e 20,
42119 Wuppertal, Germany
\email{juergen.mueller@math.uni-wuppertal.de}}

\maketitle

\abstract{Lie-theoretic structures of type $E_8$ (e.g., Lie groups and
algebras, Iwahori--Hecke algebras and Kazhdan--Lusztig cells, $\ldots$)
are considered to serve as a ``gold standard'' when it comes to judging
the effectiveness of a general algorithm for solving a computational
problem in this area. 
Here, we address a problem that occurred in our previous work on 
decomposition numbers of Iwahori--Hecke algebras, namely, the computation 
of invariant bilinear forms on so-called $W$-graph representations. We 
present a new algorithmic solution which makes it possible to produce 
and effectively use the main results in further applications.}

\section{Introduction} \label{sec0}

This paper is concerned with the representation theory of Iwahori--Hecke
algebras. Such an algebra $\cH$ is a certain deformation of the group 
algebra of a finite Coxeter group $W$. In \cite{myedin}, the notion
of ``balanced representations'' of $\cH$ was introduced, which has turned 
out to be useful in several applications. We mention here the construction 
of cellular structures on $\cH$ (see, e.g., \cite[Chap.~2]{geja}), the 
determination of decomposition numbers of $\cH$ (see \cite{gemu}), and the 
computation of Lusztig's function $\ba\colon W \rightarrow \Z$ (see 
\cite[\S 4]{geha}). To check whether a given 
representation of $\cH$ is balanced or not is a computationally hard problem; 
it involves the construction of a certain invariant bilinear form on the 
underlying $\cH$-module. It has been conjectured in \cite{myedin} that 
so-called ``$W$-graph representations'' of $\cH$ are always balanced. But 
even if such a theoretical result were known to be true, certain applications 
(e.g., the determination of decomposition numbers) would still require the 
explicit knowledge of the Gram matrices of the invariant bilinear forms. 
In this paper, we discuss algorithms for the construction of these Gram 
matrices for $W$ of exceptional type. The biggest challenge---by far---is 
the case where $W$ is of type $E_8$. (The distinguished role of $E_8$ when
it comes to performing explicit computations is highlighted in various 
recent survey articles; see, e.g., Garibaldi \cite{gari}, Lusztig 
\cite{shaw}, Vogan \cite{VoE8}). 

In the situations of interest to us, the algebra $\cH$ is defined over 
the field of rational functions $K=\Q(v)$ (where $v$ is an indeterminate); 
it has a natural basis $\{T_w\mid w\in W\}$. Explicit models for the
irreducible representations of $\cH$ are known by the work of Naruse
\cite{Naruse0}, Howlett and Yin  \cite{How}, \cite{HowYin}. Now let us fix
an irreducible matrix representation $\fX\colon \cH\rightarrow K^{d\times d}$.
In order to show that $\fX$ is balanced, one needs to determine a non-zero 
symmetric matrix $P \in K^{d\times d}$ such that 
\[P\,\fX(T_w)=\fX(T_{w^{-1}})^{\operatorname{tr}} \,P\qquad\mbox{for 
all $w \in W$};\]
this matrix $P$ then has to satisfy certain additional properties. Thus,
the computation of $P$ essentially amounts to solving a system of
linear equations; for theoretical reasons, we know that this system has
a unique solution up to multiplication by a scalar. Rescaling a given solution
by a suitable non-zero polynomial in $\Q[v]$, we can assume that all entries 
of $P$ are in $\Z[v]$ and that their greatest common divisor is~$\pm 1$; 
then $P$ is unique up to sign
and is called a ``primitive Gram matrix''. 
The general theory also shows that a particular solution is given by 
\[P_0=\sum_{w \in W} \fX(T_w)^{\text{tr}}\, \fX(T_w) \in K^{d\times d}.\]
Thus, if the matrices $\fX(T_w)$ ($w\in W$) are known and if $|W|$ is not
too large, then we can simply perform the above summation and obtain $P_0$;
rescaling $P_0$ yields a primitive Gram matrix $P$. 
This procedure works for types $F_4$, $E_6$, for example. 

\medskip
Already for type $E_7$, one needs to use a more sophisticated approach as 
described in \cite[\S 4.3]{gemu},
based on Parker's ``standard basis algorithm'' \cite{parker1}, 
in combination with interpolation and modular techniques. 
This also works for type $E_8$, but it is efficient only for 
irreducible representations of dimension up to about $2500$. In our 
previous work on decomposition numbers, this was sufficient to obtain the 
desired results for type $E_8$; see \cite[Remark~4.10]{gemu}. In principle, 
one could have run the above procedure on all irreducible representations of 
type $E_8$, but experiments showed that this would have needed a total of 
nearly one year of CPU time. On the other hand, from a strictly logical 
point of view, one does not need to know exactly how the Gram matrices 
have been obtained, because as an independent verification one can simply 
check that they form a solution to the above system of linear equations. 
However, to store the various primitive Gram matrices requires about 
$28$~GB of disk space, and even the verification alone is a major task as 
it involves the computation of products of (large) matrices with polynomial
entries.~---~In any case, this raises a serious issue of making sure
that our results are reliable and reproducible. 

In our view, the solution to deal with this issue is to develop better 
mathematical tools which make it possible to reproduce the results 
efficiently as needed, and this is what we will do in this paper. Indeed, 
for example, in order to deal with the irreducible representation of 
largest dimension for type $E_8$ (which is $7168$), the old approach would
have needed roughly seven weeks of CPU time, while the one described here 
requires only about $20$ hours, which amounts to a factor of almost $60$. 
(See Section \ref{timings} for more details.)  
In view of the complexity of the task, and the experiences made elsewhere 
with explicit computations in type $E_8$ (see the references cited above), 
it was clear that developing efficient methods would not be a standard, let 
alone press-button application of existing tools from computer algebra.
Maier et al.\ \cite{mllt} proposed an approach based on parallel 
techniques, but type $E_8$ still seems to be a major challenge there. 
Hence one of the purposes of this paper is to give a systematic 
description of the (serial) methods we have used for the computation of 
Gram matrices of invariant bilinear forms for Iwahori--Hecke algebras.

The basic strategy in our approach is to reduce computational linear 
algebra over the Laurent polynomial ring ${\Q}[v,v^{-1}]$ to linear algebra 
over the integers. Thus, generally speaking, we are faced with the
problem of devising efficient tools to do computational linear algebra 
over integral domains, not just over fields. In order to do so, we build
on general ideas from computational representation theory, more precisely
on the celebrated so-called \MA{} philosophy \cite{parker1}, which 
comprises of specially tailored, highly efficient techniques for
computational linear algebra over (small) finite fields.
Attempts to generalize these ideas to linear algebra
over the (infinite) field of rational numbers, and further to linear algebra 
over the integers have been coined the \IMA{} \cite{parker2}. The last word
on this has not been said yet, and in this paper we are trying to 
contribute here as well. (As future work, we are planning to develop
a full \IMA{} package along the present lines.)
But we are additionally going one step further by setting out to
extend these ideas to linear algebra over the univariate polynomial rings
over the rationals or the integers.

To do so, the basic idea is to reduce to linear algebra over the integers
by evaluating polynomials with rational coefficients at integral places, 
where we are using as few ``small''
places as possible, and to recover the polynomials in question by a 
Chinese remainder technique. Hence this strategy, fitting nicely into the
\IMA{} philosophy, differs from those known to the literature,
inasmuch we are neither using modular methods
(which would mean to go over to polynomial rings over finite fields),
nor are we in a position to use interpolation (which would mean
to use lots of places to evaluate at).
Thus another purpose of this paper is to give a detailed description of
the new computational tasks arising in pursuing this
strategy, and how we have accomplished them. Although the choice of the
material presented is governed by our application to Iwahori--Hecke algebras,
it is exhibited with a view towards general applicability.

\medskip
Here is an outline of the paper:
In {\bf Section \ref{sec:1}}
we recall some basic facts about representations 
of finite Coxeter groups and Iwahori--Hecke algebras, in particular
the notions of $W$-graphs, balancedness, and invariant bilinear forms. 
We conclude with Theorem \ref{balthm} saying that for 
the representations afforded by the $W$-graphs given by 
Naruse \cite{Naruse0}, Howlett and Yin \cite{How}, \cite{HowYin} 
are actually balanced, and in Tables \ref{Mmaxd0} and \ref{Mmaxd} 
we list some numerical data associated with their primitive Gram matrices. 

In the subsequent sections we describe our general approach towards 
linear algebra over integral domains, which consists of a cascade of steps:
In {\bf Section \ref{intcomp}} 
we first deal with linear algebra over $\Z$.
We discuss the key tasks of rational number recovery 
and of finding integral linear dependencies. 
Both tasks are known to the literature, but for the former 
we provide a variant containing a new feature, while for the 
latter we proceed along another strategy, within the \IMA{} philosophy.
Subsequently, we apply this to computing nullspaces, inverses, and the 
so-called ``exponents'' of matrices over $\Z$.
%
In {\bf Section \ref{polcomp}}
we then describe our general approach to deal with polynomials, in view
of our aim to do linear algebra over polynomial rings. The key task
is to recover a polynomial with rational coefficients from some of its 
evaluations at integral places. Here, we are aiming at using as few
``small'' places as possible, whence we are not in a position to apply
interpolation, but we are using a Chinese remainder technique instead.
Moreover, we devise a method to recover a polynomial from some of its 
evaluations where the latter are ``rescaled'' by unknown scalars; the 
necessity of being able to solve this task is closely related to
our use of the \IMA{}, hence to our knowledge this method is new as well. 
%
In {\bf Section \ref{matcomp}} 
we proceed to show how linear algebra over $\Z$ and polynomial recovery,
as discussed in earlier sections, can now be combined to do 
linear algebra over $\Z[X]$ and $\Q[X]$,
by devising methods to computing nullspaces, inverses, exponents and 
products of matrices using this new approach.
%
In {\bf Section \ref{repcomp}}
we finally recall the ``standard basis algorithm'' originally developed
in \cite{parker1} for computations over finite fields. We present a 
general variant for absolutely irreducible matrix representations over 
an arbitrary field, show how this can be used to compute homomorphisms
between such representations, and discuss how the necessary computations 
are facilitated over the fields $\Q$ and $\Q(X)$, using the tools we 
have developed. 

Having the general tools in place,
in {\bf Section \ref{bilcomp}}
we return to our particular application of computing Gram matrices 
of invariant bilinear forms for $W$-graph representations $\fX$ of 
Iwahori--Hecke algebras. We proceed along the strategy which has already 
been indicated in \cite[Section 4.3]{gemu}, where here we take the 
opportunity to provide full details. We begin by computing standard
bases for the representations $\fX$ and $\fX'$, where the latter is 
given by $\fX'(T_w):=\fX(T_{w^{-1}})^{\operatorname{tr}}$, for $w \in W$. 
In order to find suitable seed vectors to start with, we use an 
observation on restrictions of representations of Iwahori--Hecke algebras
to parabolic subalgebras, which naturally leads to certain distinguished
elements of $\cH$ having actions of co-rank one on $\fX$ and $\fX'$.
To actually run the standard basis algorithm subsequently, we again revert
to a specialization technique.
%
In {\bf Section \ref{bilcompII}}
we proceed by collecting a few observations on the standard bases
$B$ and $B'$ of the representations $\fX$ and $\fX'$ thus obtained.
Indeed, the matrix entries occurring seem to be much less arbitrary 
than expected from general principles, but this has only been verified 
experimentally for the representations under consideration here, while 
a priori proofs are largely missing (so far).
The final computational step then essentially is to determine the product
$B^{-1}\cdot B'$, which up to rescaling is a Gram matrix as desired.
To do this efficiently, apart from the general tools
developed above, we make heavy use of the special form 
of the matrix entries of $B^{-1}\cdot B'$ just mentioned.
%
In the concluding {\bf Section \ref{sec:timings}}
we provide running times and workspace requirements for our
computations in types $E_7$ and $E_8$, and present an explicit (tiny)
example for type $E_6$.

\medskip
It should be clear from the above description that 
to pursue our novel approach we had to solve quite a few tasks 
for which there was no pre-existing implementation, 
let alone in one and the same computer algebra system. 
To develop the necessary new code, as our computational platform 
we have chosen the computer algebra system \GAP{} \cite{GAP}.  
This system provides efficient arithmetics for the various basic objects 
we need: (i) rational integers and rational numbers, which in turn are 
handled by the {\sf GMP} library \cite{GMP}; (ii) row vectors and matrices 
over the integers, the rationals or (small) finite fields, where in this
context the entries of row vectors are actually treated as immediate objects;
(iii) floating point numbers, where the limited built-in facilities
are sufficient for our purposes.
Moreover, the necessary input data on Iwahori--Hecke algebras
and their representations is provided by the computer algebra system 
{\sf CHEVIE} \cite{jmich}, which conveniently is a branch of \GAP.

\medskip
\begin{center}
\begin{tabular}{lp{250pt}r} \multicolumn{3}{c}{\bf Contents} \\ \\
1 & Introduction \dotfill & \pageref{sec0}\\
2 & Iwahori--Hecke algebras and balanced representations \dotfill 
  & \pageref{sec:1}\\
3 & Linear algebra over the integers \dotfill & \pageref{intcomp}\\
4 & Computing with polynomials \dotfill & \pageref{polcomp}\\
5 & Linear algebra over polynomial rings \dotfill & \pageref{matcomp}\\
6 & Computing with representations \dotfill & \pageref{repcomp}\\
7 & Finding standard bases for $W$-graph representations \dotfill 
  & \pageref{bilcomp}\\
8 & Finding Gram matrices for $W$-graph representations \dotfill 
  & \pageref{bilcompII}\\
9 & Timings \dotfill & \pageref{sec:timings}\\
  & References \dotfill & \pageref{biblio}\\
\end{tabular}
\end{center}

\section{Iwahori--Hecke algebras and balanced representations} \label{sec:1}
We begin by recalling some basic facts about representations of finite
Coxeter groups and Iwahori--Hecke algebras; see \cite{gepf}, \cite{geja},
\cite{Lusztig03} for further details. 

\begin{abs} \label{Mabs11} We fix a finite Coxeter group $W$
with set of simple reflections~$S$; for $w\in W$, we denote by $l(w)$ the 
length of $w$ with respect to~$S$. Let $L\colon W \rightarrow \Z$ be a 
weight function as in \cite{Lusztig03}, that is, we have $L(ww')=L(w)+
L(w')$ whenever $w,w'\in W$ satisfy $l(ww')=l(w)+l(w')$. Such a 
weight function is uniquely determined by its values $L(s)$ for $s\in S$. 
We will assume throughout that 
\[ L(s)>0 \qquad\mbox{for all $s\in S$}.\]
Let $R\subseteq \C$ be a subring and $A=R[v,v^{-1}]$ be the ring of 
Laurent polynomials over $R$ in the indeterminate~$v$. Let $\cH=\cH_A(W,L)$ 
be the corresponding generic Iwahori--Hecke algebra. Thus, $\cH$ is an 
associative $A$-algebra which is free over $A$ with a basis $\{T_w\mid 
w\in W\}$; the multiplication is given by the following rule, where 
$s\in S$ and $w\in W$:
\[ T_sT_w=\left\{\begin{array}{cl} T_{sw} & \qquad\mbox{if $l(sw)=l(w)+1$},
\\ T_{sw}+(v^{L(s)}-v^{-L(s)})T_s & \qquad\mbox{if $l(sw)=l(w)-1$}.
\end{array}\right.\]
\end{abs}

\begin{abs} \label{Mabs12} 
Let $F\subseteq \C$ be the field of fractions of $R$ and assume that $F$ 
is a splitting field for $W$. (For example, we could take $R=F=\R$ since
$\R$ is known to be a splitting field for $W$.) Let $\Irr(W)$ be the set 
of simple $F[W]$-modules (up to isomorphism); we shall use the following 
notation:
\[ \Irr(W)=\{E^\lambda \mid \lambda\in\Lambda\}\qquad \mbox{and}\qquad 
d_\lambda=\dim E^\lambda\;\;(\lambda\in\Lambda),\]
where $\Lambda$ is a finite index set. Let $K=F(v)$ be the field of 
fractions of $A$ and $\cH_K=K\otimes_A \cH$ be the $K$-algebra obtained 
by extension of scalars from $A$ to $K$. Then $\cH_K$ is a split 
semisimple algebra and there is a bijection between $\Irr(W)$ and
$\Irr(\cH_K)$, the set of simple $\cH_K$-modules (up to isomorphism).
Given $\lambda \in\Lambda$, we denote by $E_v^\lambda$ a simple 
$\cH_K$-module corresponding to $E^\lambda$. Then $E_v^\lambda$ is
uniquely determined (up to isomorphism) by the following property. For 
$w\in W$, we have
\[ \mbox{trace}(T_w,E_v^\lambda)\in F[v,v^{-1}] \qquad\mbox{and}\qquad
\mbox{trace}(w,E^\lambda)=\mbox{trace}(T_w,E_v^\lambda)|_{v\mapsto 1}.\]
\end{abs}

\begin{abs} \label{Mabs13} The algebra $\cH_K$ is symmetric, with trace 
form $\tau\colon \cH_K \rightarrow K$ given by $\tau(T_1)=1$ and $\tau(T_w)
=0$ for $1\neq w\in W$. The basis dual to $\{T_w\mid w\in W\}$ is given by 
$\{T_{w^{-1}}\mid w\in W\}$. By the general theory of symmetric algebras, 
there are well-defined elements $0\neq \bc_\lambda\in A$ ($\lambda\in
\Lambda)$ such that the following orthogonality relations hold for $\lambda,
\mu\in\Lambda$:
\[\sum_{w\in W} \mbox{trace}(T_w,E_v^\lambda)\mbox{trace}(T_{w^{-1}},
E_v^\mu)=\left\{\begin{array}{cl} d_\lambda \bc_\lambda &\qquad 
\mbox{if $\lambda=\mu$},\\ 0 & \qquad\mbox{if $\lambda\neq \mu$}.
\end{array}\right.\]
As observed by Lusztig, we can write each $\bc_\lambda$ uniquely in the form 
\[ \bc_\lambda=f_\lambda v^{-2\ba_\lambda} +\mbox{linear combination of 
larger powers of $v$},\]
where $f_\lambda$ is a strictly positive real number and $\ba_\lambda$
is a non-negative integer. The ``$a$-invariants'' $\ba_\lambda$ will
play a major role in the sequel; these numbers are explicitly known
for all types of $W$ and all choices of $L$ (see \cite[\S 1.3]{geja},
\cite[Chap.~22]{Lusztig03}). Alternatively, $\ba_\lambda$ can be 
characterized as follows:
\[ \ba_\lambda=\min\{i\geq 0 \mid v^i\mbox{trace}(T_w,E_v^\lambda)
\in F[v]\mbox{ for all $w\in W$}\}.\]
\end{abs}

\begin{abs} \label{Mabs14}
Let $\cO\subseteq K$ be the localization of $F[v]$ in the prime ideal 
$(v)$, that is, $\cO$ consists of all fractions of the form $f/g\in K$ 
where $f,g\in F[v]$ and $g(0)\neq 0$. Let $\fX^\lambda \colon \cH_K
\rightarrow K^{d_\lambda\times d_\lambda}$ be a matrix representation
afforded by $E_v^\lambda$. Following \cite{myedin}, we say that
$\fX^\lambda$ is balanced if
\[ v^{\ba_\lambda} \fX^\lambda(T_w)\in 
\cO^{d_\lambda\times d_\lambda} \qquad \mbox{for all $w\in W$}.\] 
This concept plays a crucial role in the study of ``cellular structures''
on $\cH$ (see \cite{myedin}) and the determination of 
Kazhdan--Lusztig cells (see \cite[\S 4]{geha}). It is known that every 
$E_v^\lambda$ affords a balanced representation. Note that, given some 
matrix representation afforded by $E_v^\lambda$, the above condition is 
hard to verify since it involves representing matrices for {\it all} 
$w\in W$. Much better for practical purposes is the following condition.
\end{abs}

\begin{prop}[See \protect{\cite[Prop.~4.3, Remark~4.4]{myedin}}] 
\label{ediprop} Assume that $F\subseteq \R$. Let $\lambda\in\Lambda$ 
and $\fX^\lambda \colon \cH_K \rightarrow K^{d_\lambda\times d_\lambda}$ 
be a matrix 
representation afforded by $E_v^\lambda$. Then $\fX^\lambda$ is balanced 
if and only if there exists a symmetric matrix 
$\Omega^\lambda\in\GL_{d_\lambda}(\cO)$ such that 
\begin{equation*}
\Omega^\lambda\,\fX^\lambda(T_s)=\fX^\lambda(T_s)^{\operatorname{tr}}
\,\Omega^\lambda\qquad\mbox{for all $s \in S$}.\tag{$*$}
\end{equation*}
\end{prop}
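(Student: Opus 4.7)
My plan is to base both directions on the one-dimensionality of the space of matrices intertwining $\fX^\lambda$ with the ``twisted'' representation $\fX'^\lambda(T_w) := \fX^\lambda(T_{w^{-1}})^{\operatorname{tr}}$ (which has the same character as $\fX^\lambda$, hence is isomorphic to it by semisimplicity of $\cH_K$), together with the trace characterisation of $\ba_\lambda$ given in 1.3. I begin by showing that condition $(*)$, which only constrains the generators $s \in S$, is automatically equivalent to the full relation $\Omega^\lambda\,\fX^\lambda(T_w) = \fX^\lambda(T_{w^{-1}})^{\operatorname{tr}}\,\Omega^\lambda$ for every $w \in W$. This is a routine induction on $\ell(w)$: writing $w = sw'$ with $\ell(w) = 1 + \ell(w')$, we have $T_w = T_sT_{w'}$; since $\ell(w'^{-1}s) = \ell(w'^{-1}) + 1$, we also have $T_{w^{-1}} = T_{w'^{-1}}T_s$, so combining $(*)$ at $s$ with the inductive hypothesis at $w'$ gives the relation at $w$.

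For the ``if'' direction, suppose $\Omega^\lambda \in \GL_{d_\lambda}(\cO)$ is symmetric and satisfies $(*)$. Rewriting the extended relation as $\fX^\lambda(T_w) = (\Omega^\lambda)^{-1}\fX^\lambda(T_{w^{-1}})^{\operatorname{tr}}\,\Omega^\lambda$, one sees that the negative $v$-powers appearing in the entries of $\fX^\lambda(T_w)$ and $\fX^\lambda(T_{w^{-1}})$ must be controlled by the same constant $N$, since both $\Omega^\lambda$ and $(\Omega^\lambda)^{-1}$ have entries in $\cO$. I would then identify $N$ with $\ba_\lambda$ by feeding this into the trace characterisation $\ba_\lambda = \min\{i \geq 0 \mid v^i\,\mbox{trace}(T_w,E_v^\lambda) \in F[v] \text{ for all } w\}$: the diagonal entries of $\fX^\lambda(T_wT_{w^{-1}}) = \fX^\lambda(T_w)(\Omega^\lambda)^{-1}\fX^\lambda(T_w)^{\operatorname{tr}}\Omega^\lambda$ are forced to lie in $v^{-2N}\cO$, and summing the diagonals against the orthogonality relations of 1.3 together with the positivity $f_\lambda > 0$ (which uses $F \subseteq \R$) pins down $N = \ba_\lambda$.

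For the ``only if'' direction, I would pick the essentially unique symmetric intertwiner $P \in K^{d_\lambda\times d_\lambda}$---existing by Schur applied to the absolutely irreducible $\fX^\lambda$, and symmetric by the Frobenius--Schur argument available over $F \subseteq \R$---and rescale so that $P \in \cO^{d_\lambda\times d_\lambda}$ with non-zero reduction $\bar P \in F^{d_\lambda \times d_\lambda}$ at $v = 0$; this $P$ is the candidate $\Omega^\lambda$. The critical and non-trivial step is that $\bar P$ is non-singular, so that $P \in \GL_{d_\lambda}(\cO)$. Here balancedness enters: the reductions at $v=0$ of the matrices $v^{\ba_\lambda}\fX^\lambda(T_w)$ provide a representation of Lusztig's asymptotic algebra on $F^{d_\lambda}$, on which $\bar P$ is an invariant symmetric form whose non-degeneracy is guaranteed by the positivity of the leading coefficient $f_\lambda$ of $\bc_\lambda$.

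The main obstacle throughout is linking the $a$-invariant (defined purely via traces) to pole orders of individual matrix entries and to the non-degeneracy of $\bar P$. This is bridged by exploiting $(*)$---which symmetrises the pole orders of $\fX^\lambda(T_w)$ and $\fX^\lambda(T_{w^{-1}})$---together with the positivity of $f_\lambda$ in the leading term of $\bc_\lambda$, which is available precisely because $F \subseteq \R$. Everything else is formal bookkeeping built on top of Schur's lemma and the inductive extension of the first step.
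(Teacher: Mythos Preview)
The paper does not give its own proof of this proposition; it is quoted from \cite{myedin}. So there is nothing in the present paper to compare against, and I comment on your plan directly.

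Your outline has the right architecture and correctly locates where $F\subseteq\R$ and $f_\lambda>0$ enter, and the inductive passage from $(*)$ on generators to all $w\in W$ is fine. Two steps, however, are not justified as written.

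In the ``if'' direction, the sentence about ``diagonal entries of $\fX^\lambda(T_wT_{w^{-1}})$'' and ``summing against the orthogonality relations of~\ref{Mabs13}'' is not a proof: $T_wT_{w^{-1}}$ is not a basis element, and the character orthogonality relations control traces, not matrix entries. The clean mechanism is the matrix $P_0=\sum_w \fX^\lambda(T_w)^{\mathrm{tr}}\fX^\lambda(T_w)$ of Remark~\ref{edirem2}. If $N$ is minimal with $v^N\fX^\lambda(T_w)\in\cO^{d_\lambda\times d_\lambda}$ for all $w$, then $v^{2N}P_0$ lies in $\cO^{d_\lambda\times d_\lambda}$ and its reduction at $v=0$ equals $\sum_w c_w^{\mathrm{tr}}c_w\neq 0$ (here $c_w:=(v^N\fX^\lambda(T_w))|_{v=0}$, and $F\subseteq\R$ is used). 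On the other hand the Schur relations give $P_0=\bigl(\mathrm{trace}((\Omega^\lambda)^{-1})\,\bc_\lambda\bigr)\,\Omega^\lambda$, whose $v$-adic valuation is at least $-2\ba_\lambda$; comparing forces $N\le\ba_\lambda$, and the trace characterisation gives $\ba_\lambda\le N$.

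In the ``only if'' direction, invoking Lusztig's asymptotic algebra is heavier than needed, and the assertion that non-degeneracy of $\bar P$ ``is guaranteed by the positivity of $f_\lambda$'' is left unexplained. A direct argument: take $\Omega^\lambda:=v^{2\ba_\lambda}P_0\in\cO^{d_\lambda\times d_\lambda}$, so $\bar\Omega=\sum_w c_w^{\mathrm{tr}}c_w$ is positive semidefinite over $F\subseteq\R$. If $\bar\Omega x=0$ then $c_wx=0$ for every $w$; but specialising the Schur relation $\sum_w \fX^\lambda(T_{w^{-1}})\,\phi\,\fX^\lambda(T_w)=\mathrm{trace}(\phi)\,\bc_\lambda\,I$ at $v=0$ with $\phi=xy^{\mathrm{tr}}$ gives $0=(y^{\mathrm{tr}}x)f_\lambda I$ for all $y$, whence $x=0$. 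Thus $\bar\Omega$ is invertible and $\Omega^\lambda\in\GL_{d_\lambda}(\cO)$. This is precisely where $f_\lambda\neq 0$ is used, and no cell theory is required.
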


\begin{rem} \label{edirem1} Note that, if a matrix $\Omega^\lambda$ 
satisfies ($*$), then it immediately follows that 
\[\Omega^\lambda\,\fX^\lambda(T_{w^{-1}})=\fX^\lambda
(T_w)^{\operatorname{tr}}\,\Omega^\lambda\qquad\mbox{for all $w\in W$}.\]
Thus, $\Omega^\lambda$ is the Gram matrix of a symmetric 
bilinear form $\langle\;,\;\rangle_\lambda\colon E_v^\lambda\times 
E_v^\lambda \rightarrow K$ which is $\cH_K$-invariant in the sense that 
\[ \langle T_w.e,e'\rangle_\lambda=\langle e,T_{w^{-1}}.e'\rangle_\lambda
\qquad\mbox{for all $e,e'\in E_v^\lambda$ and $w\in W$}.\]
\end{rem}

\begin{rem} \label{edirem2} Assume that $F\subseteq\R$. Let $\lambda\in
\Lambda$ and $\fX^\lambda \colon \cH_K \rightarrow 
K^{d_\lambda\times d_\lambda}$ be a 
matrix representation afforded by $E_v^\lambda$. Let $\cE(\fX^\lambda)$ be 
the set of all $P\in K^{d_\lambda\times d_\lambda}$ such that 
$P\,\fX^\lambda(T_s)=
\fX^\lambda(T_s)^{\operatorname{tr}} \,P$ for $s \in S$. Since $\fX^\lambda$
is irreducible, Schur's Lemma implies that all matrices in $\cE(\fX^\lambda)$
are scalar multiples of each other. By \cite[Remark~1.4.9]{geja}, there
is a specific element $P_0\in \cE(\fX^\lambda)$ given by 
\[ P_0:=\sum_{w \in W} \fX^\lambda(T_w)^{\text{tr}}\, \fX^\lambda(T_w)
\in K^{d_\lambda\times d_\lambda};\]
furthermore, we have $\det(P_0)\neq 0$. By the Schur Relations (see 
\cite[7.2.1]{gepf}), we have
\[\sum_{w \in W} \fX^\lambda(T_{w^{-1}})P_0^{-1} \fX^\lambda(T_w)=
\mbox{trace}(P_0^{-1})\bc_\lambda I_{d_\lambda}.\]
Using the relation $P_0\,\fX^\lambda(T_{w^{-1}})=\fX^\lambda
(T_w)^{\operatorname{tr}} \,P_0$ for all $w\in W$, we deduce that
\[\mbox{trace}(P_0^{-1})\bc_\lambda=1.\]
This provides a direct criterion for checking if a given matrix
$P\in \cE(\fX^\lambda)$ equals~$P_0$. Furthermore, if $P\neq 0$ is an
element of $\cE(\fX^\lambda)$, then $P=cP_0$ for some $0\neq c\in K$ and 
so $\bc_\lambda\mbox{trace}(P^{-1})P=\bc_\lambda\mbox{trace}(P_0^{-1})P_0=
P_0$.
\end{rem}

The following concept was introduced by Kazhdan--Lusztig \cite{KaLu} in
the equal parameter case (where $L(s)=1$ for all $s\in S$); for the general 
case see \cite[\S 1.4]{geja}.

\begin{defn} \label{wgraph} Let $V$ be an $\cH_K$-module with $d:=\dim V
<\infty$. We say that $V$ is {\it afforded by a $W$-graph} if there
exist 
\begin{itemize}
\item a basis $\{e_1,\ldots,e_d\}$ of $V$, 
\item subsets $I_i\subseteq S$ for $1\leq i\leq d$,
\item and elements $m_{ij}^s\in A$, where $1\leq i,j
\leq d$ and $s \in I_i\setminus I_j$, 
\end{itemize}
such that the following hold. First, we require that
\[v^{L(s)}m_{ij}^s\in vR[v]\quad\mbox{and}\quad m_{ij}^s=
m_{ij}^s|_{v\mapsto v^{-1}} \quad \mbox{for all $1\leq i,j\leq d$, 
$s \in I_i\setminus I_j$}.\]
Furthermore, for $s\in S$, the action of $T_s$ on $V$ is given by
\begin{equation*}
\renewcommand{\arraystretch}{1.2}
T_s.e_j= \left\{\begin{array}{ll} \displaystyle{v^{L(s)}\,e_j+ 
\sum_{1\leq i \leq d:\,s \in I_i} m_{ij}^s \,e_i}
&\qquad\mbox{if $s\not\in I_j$},\\ -v^{-L(s)}\,e_j &\qquad 
\mbox{if $s \in I_j$}.\end{array}\right.
\end{equation*}
Thus, if $V$ is afforded by a $W$-graph representation, then the action of 
$T_s$ on $V$ is given by matrices of a particularly simple form. 
\end{defn}

It has been conjectured in \cite{myedin} (see also \cite[1.4.14]{geja})
that, if the simple $\cH_K$-module $E_v^\lambda$ is afforded by a 
$W$-graph, then the corresponding matrix representation is balanced. We
now turn to the problem of explicitly verifying if a given irreducible 
matrix representation of $\cH_K$ is balanced or not. 

\begin{abs} \label{wgr1a} We shall assume from now that $W$ is a finite 
Weyl group and that we are in the equal parameter case where $L(s)=1$ 
for all $s\in S$; we may take $R=\Z$, $F=\Q$ in the above discussion. (The 
remaining cases have been dealt with in \cite[Examples~4.5, 4.6]{myedin}.)
It is known that every simple $\cH_K$-module $E_v^\lambda$ is afforded by 
a $W$-graph; see \cite[Theorem~2.7.2]{geja} and the references there. As 
far as $W$ of exceptional type is concerned, such $W$-graphs have been 
determined explicitly, by Naruse \cite{Naruse0}, Howlett and Yin \cite{How}, 
\cite{HowYin}. They are available in electronic form through Michel's 
development version of the {\sf CHEVIE} system; see \cite{jmich}. Now let us 
fix $\lambda\in\Lambda$ and assume that $\fX^\lambda\colon \cH_K \rightarrow 
K^{d_\lambda\times d_\lambda}$ is
a corresponding representation afforded by a $W$-graph. Concretely, this 
will mean that we are given the collection of matrices $\{X_s:=
\fX^\lambda(T_s)\mid s\in S\}$. Our aim is to find a matrix $P=
(p_{ij})_{1\leq i,j\leq d_\lambda}$ such that 
\begin{equation*}
PX_s=X_s^{\operatorname{tr}} \,P\qquad\mbox{for all $s \in S$}.\tag{$*$}
\end{equation*}
This is a system of $|S|d_\lambda^2$ homogeneous linear equations for the 
$d_\lambda(d_\lambda+1)/2$ unknown entries of $P$. (Recall that $P$ is
symmetric.) We know that $P$ is uniquely determined up to scalar multiples. 
Rescaling a given solution by a suitable non-zero polynomial in $\Q[v]$, 
we can assume that all entries of $P$ are in $\Z[v]$ and that their 
greatest common divisor is $\pm 1$; then $P$ is unique up to a sign.
Such a solution $P$ will be called a {\em primitive Gram matrix} for 
$\fX^\lambda$. As in \ref{edirem2}, a specific solution $P_0$ can be 
singled out by the condition that $\mbox{trace}(P_0^{-1})\bc_\lambda=1$. 
We claim that
\begin{itemize}
\item the matrix $P_0':=v^{2l(w_0)}P_0$ has entries in $\Z[v]$, and
\item the non-zero entries of $P_0'$ have degree at most $2l(w_0)$.
\end{itemize}
Here, $w_0$ denotes the longest element of $W$. Indeed, since all the 
entries of the matrices $X_s$ ($s\in S$) are in $\Z[v,v^{-1}]$, the same 
will be true for $P_0$ as well. The formulae in \ref{wgraph} show that each
matrix $vX_s$ ($s \in S$) has entries in $\Z[v]$. Hence, all matrices 
$v^{l(w_0)} \fX^\lambda(T_w)$ have entries in $\Z[v]$ and so $P_0'$ has 
entries in $\Z[v]$. Furthermore, the non-zero entries of each matrix $vX_s$
have degree $0$, $1$ or $2$. This yields the degree bound for the entries
of~$P_0'$.

Since the entries of $P_0'$ are integer polynomials of bounded degree,
we can determine $P_0'$ by interpolation and modular techniques (Chinese
remainder). Combining this with the techniques described in 
\cite[\S 4.3]{gemu}, one obtains an algorithm which can be implemented 
in {\sf GAP} in a straightforward way. 
Rescaling these matrices by suitable non-zero polynomials in $\Q[v]$,
we obtain primitive Gram matrices as solutions of~($*$). 
This approach readily produces primitive Gram matrices for $W$ 
of type $F_4$, $E_6$ and $E_7$ in a few hours of computing time.
As was already advertised in Section~\ref{sec0}, we also succeeded
in obtaining primitive Gram matrices for type $E_8$, where it is 
one of the purposes of this paper to describe the methods involved.
\end{abs}

Tables~\ref{Mmaxd0} and~\ref{Mmaxd} contain some information 
about these primitive Gram matrices $P$: 
\begin{center}\begin{tabular}{ll}
1st column: &usual names of the irreducible representations.\\
2nd column: &maximum degree of a non-zero entry of $P$.\\
3rd column: &maximum absolute value of a coefficient of an
entry of $P$.\\
4th column: &is the specialized matrix $P|_{v\rightarrow 0}$ diagonal?\\
5th column: &prime divisors of the determinant of $P|_{v\rightarrow 0}$.\\
& (No entry means that this determinant is $\pm 1$.) \end{tabular}
\end{center}
%
%
We note that the primes in the 5th column are so-called ``bad primes'' for 
$W$ (as in \cite[1.5.11]{geja}).
In particular, the fact that $P|_{v\rightarrow 0}$ always has a non-zero 
determinant means that $\det(P)\in\cO^\times$ (see Proposition~\ref{ediprop}).
Thus, we can conclude:

\begin{thm} \label{balthm} Let $W$ be of type $F_4$, $E_6$, $E_7$
or $E_8$ and $L(s)=1$ for all $s\in S$. Then the $W$-graph
representations of Naruse \cite{Naruse0}, Howlett and Yin  \cite{How}, 
\cite{HowYin} are balanced.
\end{thm}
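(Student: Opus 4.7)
The plan is to verify Theorem \ref{balthm} via the practical criterion of Proposition \ref{ediprop}: for each $\lambda\in\Lambda$ it suffices to exhibit a symmetric matrix $\Omega^\lambda\in\GL_{d_\lambda}(\cO)$ satisfying the intertwining relation $(*)$ on the simple reflections. So for every irreducible representation $\fX^\lambda$ afforded by the $W$-graphs of Naruse, Howlett and Yin, the task reduces to producing one such matrix and then checking a single unit condition on its determinant.

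The first step is to construct, for each $\lambda\in\Lambda$, a primitive Gram matrix $P^\lambda\in\Z[v]^{d_\lambda\times d_\lambda}$ in the sense of \ref{wgr1a}: a symmetric solution of $(*)$ whose entries have greatest common divisor $\pm 1$. Existence, and uniqueness up to sign, follow from Schur's Lemma together with Remark \ref{edirem2}. Moreover, the $W$-graph form of $\fX^\lambda$ implies, as noted in \ref{wgr1a}, that the distinguished element $v^{2l(w_0)}P_0$ has entries in $\Z[v]$ of degree at most $2l(w_0)$; in principle this turns $(*)$ into a finite linear system over $\Z$, and a suitable rescaling of any non-zero solution gives $P^\lambda$.

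The second step is to specialize $v\mapsto 0$ and inspect $P^\lambda|_{v\to 0}$. Whenever $\det(P^\lambda|_{v\to 0})\neq 0$, the polynomial $\det(P^\lambda)\in F[v]$ has a non-zero constant term, hence is a unit in the local ring $\cO$; this places $P^\lambda\in\GL_{d_\lambda}(\cO)$, so Proposition \ref{ediprop} applied with $\Omega^\lambda:=P^\lambda$ delivers balancedness immediately. The fifth columns of Tables \ref{Mmaxd0} and \ref{Mmaxd} encode precisely this: they list the prime divisors of $\det(P^\lambda|_{v\to 0})$, which happen to be a subset of the bad primes of $W$, and the decisive point is that this determinant is non-zero for every $\lambda$, which is what finishes the argument.

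The real obstacle---and indeed the whole subject of this paper---is the first step: explicitly producing $P^\lambda$ for every $\lambda\in\Lambda$, and in particular for $W$ of type $E_8$, where $d_\lambda$ reaches $7168$. For $F_4$, $E_6$ and even $E_7$ the direct approach (Parker-style standard basis algorithms combined with interpolation and modular reduction, as sketched in \cite[\S 4.3]{gemu}) suffices, but for $E_8$ a naive implementation is infeasible on both CPU-time and memory grounds. The later sections develop the algorithmic machinery---rational reconstruction and linear algebra over $\Z$, polynomial recovery by Chinese remaindering at a few small integral places, linear algebra over $\Z[v]$ and $\Q[v]$, and a tailored standard basis algorithm for absolutely irreducible representations over $\Q(v)$---required to carry this computation through in all cases.
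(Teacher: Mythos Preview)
Your proposal is correct and follows essentially the same approach as the paper: compute a primitive Gram matrix $P^\lambda$ for each $\lambda$, verify from the data in Tables~\ref{Mmaxd0} and~\ref{Mmaxd} that $\det(P^\lambda|_{v\to 0})\neq 0$ so that $\det(P^\lambda)\in\cO^\times$, and then invoke Proposition~\ref{ediprop}. The paper states exactly this just before the theorem, and, as you note, the substance of the argument lies in the algorithmic machinery of the later sections that makes the computation of the $P^\lambda$ feasible for type $E_8$.
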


\afterpage{\begin{landscape}
\begin{table}[ph!]
\caption{Information on primitive Gram matrices for type $F_4$, $E_6$, 
$E_7$; cf.\ \ref{wgr1a}} \label{Mmaxd0}
\begin{center} 
$\begin{array}{c@{\hspace{4pt}}c@{\hspace{4pt}}c@{\hspace{4pt}}
c@{\hspace{4pt}}c} \hline \text{$F_4$} & \text{deg.} & \text{abs.\
val.} & \text{diag.} & \text{det}\\ \hline
1_1 & 0 & 1 & y & \\
1_2 & 0 & 1 & y & \\
1_3 & 0 & 1 & y & \\
1_4 & 0 & 1 & y & \\
2_1 & 2 & 1 & y & \\
2_2 & 2 & 1 & y & \\
2_3 & 2 & 1 & y & \\
2_4 & 2 & 1 & y & \\
4 & 4 & 2 & y & \\
9_1 & 4 & 2 & y & \\
9_2 & 6 & 8 & y & 2\\
9_3 & 6 & 12 & y & 2\\
9_4 & 10 & 6 & y & \\
6_1 & 4 & 2 & y & \\
6_2 & 4 & 4 & y & 2\\
12 & 8 & 54 & y & 2,3\\
4_1 & 2 & 2 & y & 2\\
4_2 & 2 & 1 & y & \\
4_3 & 2 & 1 & y & \\
4_4 & 6 & 4 & y & 2\\
8_1 & 4 & 2 & y & \\
8_2 & 6 & 3 & y & \\
8_3 & 4 & 2 & y & \\
8_4 & 6 & 3 & y & \\
16 & 8 & 16 & y & 2\\
\hline \\\\\\\\\end{array}\qquad\qquad
\begin{array}{c@{\hspace{4pt}}c@{\hspace{4pt}}c@{\hspace{4pt}}
c@{\hspace{4pt}}c} \hline \text{$E_6$} & \text{deg.} & \text{abs.\
val.} & \text{diag.} & \text{det}\\ \hline
1_p & 0 & 1 & y & \\
1_p' & 0 & 1 & y & \\
10_s & 6 & 3 & y & \\
6_p & 2 & 1 & y & \\
6_p' & 10 & 5 & y & \\
20_s & 6 & 3 & y & \\
15_p & 4 & 2 & y & \\
15_p' & 8 & 4 & y & \\
15_q & 6 & 3 & y & \\
15_q' & 8 & 8 & y & \\
20_p & 4 & 2 & y & \\
20_p' & 22 & 61 & y & \\
24_p & 6 & 5 & y & \\
24_p' & 12 & 20 & y & \\
30_p & 6 & 6 & y & 2\\
30_p' & 18 & 304 & y & 2\\
60_s & 10 & 26 & y & \\
80_s & 14 & 711 & y & 2,3\\
90_s & 12 & 58 & n & 3\\
60_p & 10 & 21 & y & \\
60_p' & 12 & 44 & y & \\
64_p & 8 & 12 & y & \\
64_p' & 20 & 192 & y & \\
81_p & 10 & 24 & y & \\
81_p' & 12 & 32 & y & \\
\hline\\\\\\\\ \end{array}\qquad\qquad
\begin{array}{c@{\hspace{4pt}}c@{\hspace{4pt}}c@{\hspace{4pt}}
c@{\hspace{4pt}}c} \hline \text{$E_7$} & \text{deg.} & \text{abs.\
val.} & \text{diag.} & \text{det}\\ \hline
1_a & 0 & 1 & y & \\
1_a' & 0 & 1 & y & \\
7_a & 12 & 6 & y & \\
7_a' & 2 & 1 & y & \\
15_a & 8 & 8 & y & \\
15_a' & 6 & 3 & y & \\
21_a & 4 & 2 & y & \\
21_a' & 10 & 7 & y & \\
21_b & 16 & 19 & y & \\
21_b' & 4 & 2 & y & \\
27_a & 4 & 2 & y & \\
27_a' & 32 & 164 & y & \\
35_a & 8 & 6 & y & \\
35_a' & 6 & 3 & y & \\
35_b & 6 & 3 & y & \\
35_b' & 22 & 144 & y & \\
56_a & 26 & 1082 & y & 2\\
56_a' & 6 & 6 & y & 2\\
70_a & 12 & 56 & y & \\
70_a' & 10 & 26 & y & \\
84_a & 10 & 26 & y & \\
84_a' & 16 & 148 & y & \\
105_a & 22 & 377 & y & \\
105_a' & 8 & 12 & y & \\
105_b & 10 & 21 & y & \\
105_b' & 20 & 504 & y & \\
105_c & 12 & 38 & y & \\
105_c' & 12 & 44 & y & \\
120_a & 8 & 24 & y & 2\\
120_a' & 30 & 7516 & y & 2\\
\hline \end{array}\qquad\quad
\begin{array}{c@{\hspace{3pt}}c@{\hspace{3pt}}c@{\hspace{3pt}}
c@{\hspace{3pt}}c} \hline \text{$E_7$} & \text{deg.} & \text{abs.\
val.} & \text{diag.} & \text{det}\\ \hline
168_a & 10 & 35 & y & \\
168_a' & 26 & 2193 & y & \\
189_a & 12 & 56 & y & \\
189_a' & 16 & 112 & y & \\
189_b & 28 & 7498 & y & \\
189_b' & 10 & 42 & y & \\
189_c & 22 & 454 & y & \\
189_c' & 10 & 38 & y & \\
210_a & 10 & 35 & y & \\
210_a' & 22 & 973 & y & \\
210_b & 14 & 253 & y & \\
210_b' & 16 & 468 & y & \\
216_a & 22 & 1596 & y & \\
216_a' & 14 & 227 & y & \\
280_a & 22 & 1836 & n & 3\\
280_a' & 12 & 58 & n & 3\\
280_b & 14 & 241 & y & \\
280_b' & 20 & 2368 & y & \\
315_a & 26 & 47277 & y & 2,3\\
315_a' & 14 & 4122 & y & 2,3\\
336_a & 20 & 892 & y & \\
336_a' & 14 & 175 & y & \\
378_a & 24 & 7310 & y & \\
378_a' & 14 & 453 & y & \\
405_a & 14 & 637 & y & 2\\
405_a' & 26 & 46878 & y & 2\\
420_a & 16 & 1332 & y & 2\\
420_a' & 20 & 4148 & y & 2\\
512_a & 20 & 6036 & y & \\
512_a' & 20 & 6036 & y & \\
\hline\end{array}$
\end{center}
\end{table}
\end{landscape}}

\afterpage{\begin{landscape}
\begin{table}[ph!]
\caption{Information on primitive Gram matrices for type $E_8$;
cf.\ \ref{wgr1a}} \label{Mmaxd}
\begin{center} 
$\begin{array}{c@{\hspace{1pt}}c@{\hspace{3pt}}c@{\hspace{3pt}}
c@{\hspace{3pt}}c} \hline \text{repr.} & \text{deg.} & \text{abs.\
val.} & \text{diag.} & \text{det}\\ \hline
1_x & 0 & 1 & y & \\
1_x' & 0 & 1 & y & \\
28_x & 4 & 2 & y & \\
28_x' & 12 & 10 & y & \\
35_x & 4 & 2 & y & \\
35_x' & 38 & 377 & y & \\
70_y & 8 & 6 & y & \\
50_x & 8 & 6 & y & \\
50_x' & 22 & 257 & y & \\
84_x & 6 & 3 & y & \\
84_x' & 38 & 675 & y & \\
168_y & 16 & 340 & y & \\
175_x & 12 & 52 & y & \\
175_x' & 20 & 992 & y & \\
210_x & 8 & 24 & y & 2\\
210_x' & 42 & 95780 & y & 2\\
420_y & 16 & 1432 & y & \\
300_x & 10 & 41 & y & \\
300_x' & 40 & 12710 & y & \\
350_x & 12 & 56 & y & \\
350_x' & 20 & 290 & y & \\
525_x & 12 & 76 & y & \\
525_x' & 24 & 1946 & y & \\
567_x & 10 & 54 & y & \\
567_x' & 42 & 57812 & y & \\
1134_y & 22 & 8739 & y & \\
700_xx & 18 & 1399 & y & \\
700_xx' & 20 & 5982 & y & \\
\hline \end{array}\qquad\quad
\begin{array}{c@{\hspace{1pt}}c@{\hspace{1pt}}c@{\hspace{1pt}}
c@{\hspace{1pt}}c} \hline \text{repr.} & \text{deg.} & \text{abs.\
val.} & \text{diag.} & \text{det}\\ \hline
700_x & 12 & 538 & y & 2\\
700_x' & 54 & 16489188 & y & 2\\
1400_y & 22 & 22286 & n & 2,3\\
840_x & 16 & 6044 & y & \\
840_x' & 26 & 37603 & y & \\
1680_y & 22 & 3447 & n & 2,5\\
972_x & 16 & 2098 & y & \\
972_x' & 36 & 185342 & y & \\
1050_x & 16 & 3792 & y & \\
1050_x' & 34 & 390765 & y & \\
2100_y & 22 & 5561 & y & \\
1344_x & 14 & 1140 & y & \\
1344_x' & 40 & 381082 & y & \\
2688_y & 24 & 169180 & y & \\
1400_x & 16 & 41820 & y & 2,3\\
1400_x' & 48 & 763453596 & y & 2,3\\
1575_x & 14 & 783 & n & 3\\
1575_x' & 44 & 850956 & n & 3\\
3150_y & 26 & 6166994 & y & 2\\
2100_x & 20 & 3514 & y & \\
2100_x' & 26 & 12511 & y & \\
4200_y & 28 & 58249760 & n & 2\\
2240_x & 20 & 1878156 & y & 2\\
2240_x' & 42 & 60390945 & y & 2\\
4480_y & 32 & 85556320920 & y & 2,3,5\\
2268_x & 16 & 5948 & y & 2\\
2268_x' & 40 & 6442224 & y & 2\\
4536_y & 28 & 3887856 & n & 2\\
\hline \end{array}\qquad\quad
\begin{array}{c@{\hspace{1pt}}c@{\hspace{1pt}}c@{\hspace{1pt}}
c@{\hspace{1pt}}c} \hline \text{repr.} & \text{deg.} & \text{abs.\
val.} & \text{diag.} & \text{det}\\ \hline
2835_x & 24 & 1344484 & y & \\
2835_x' & 32 & 5391418 & y & \\
5670_y & 30 & 10762741 & n & 2,3,5\\
3200_x & 24 & 266284 & y & \\
3200_x' & 30 & 587345 & y & \\
4096_x & 22 & 531634 & y & \\
4096_x' & 44 & 234956568 & y & \\
4200_x & 24 & 5413484 & y & 2\\
4200_x' & 36 & 129331224 & y & 2\\
6075_x & 26 & 894864 & y & \\
6075_x' & 34 & 10488013 & y & \\
8_z & 2 & 1 & y & \\
8_z' & 14 & 6 & y & \\
56_z & 6 & 3 & y & \\
56_z' & 10 & 7 & y & \\
112_z & 6 & 6 & y & 2\\
112_z' & 54 & 20790 & y & 2\\
160_z & 8 & 12 & y & \\
160_z' & 32 & 400 & y & \\
448_w & 16 & 128 & y & \\
400_z & 12 & 132 & y & \\
400_z' & 38 & 58368 & y & \\
448_z & 12 & 290 & y & \\
448_z' & 32 & 17290 & y & \\
560_z & 10 & 73 & y & \\
560_z' & 46 & 408409 & y & \\
1344_w & 24 & 177956 & y & \\
840_z & 14 & 643 & y & \\
\hline \end{array}\qquad\quad
\begin{array}{c@{\hspace{1pt}}c@{\hspace{1pt}}c@{\hspace{1pt}}
c@{\hspace{1pt}}c} \hline \text{repr.} & \text{deg.} & \text{abs.\
val.} & \text{diag.} & \text{det}\\ \hline
840_z' & 26 & 8048 & y & \\
1008_z & 12 & 156 & n & 3\\
1008_z' & 40 & 66780 & n & 3\\
2016_w & 28 & 797422 & y & \\
1296_z & 14 & 345 & y & \\
1296_z' & 34 & 23195 & y & \\
1400_{zz} & 16 & 10042 & y & \\
1400_{zz}' & 34 & 358379 & y & \\
1400_z & 14 & 8148 & y & 2,3\\
1400_z' & 50 & 60122676 & y & 2,3\\
2400_z & 22 & 6380 & y & \\
2400_z' & 28 & 55922 & y & \\
2800_z & 20 & 38038 & y & 2\\
2800_z' & 30 & 882222 & y & 2\\
5600_w & 26 & 372230 & n & 3\\
3240_z & 16 & 25586 & y & \\
3240_z' & 48 & 33653538 & y & \\
3360_z & 20 & 29722 & y & \\
3360_z' & 32 & 775084 & y & \\
7168_w & 32 & 1190470476 & y & 2,3\\
4096_z & 22 & 531634 & y & \\
4096_z' & 44 & 234956568 & y & \\
4200_z & 26 & 728053 & y & \\
4200_z' & 28 & 1298612 & y & \\
4536_z & 24 & 2728756 & y & \\
4536_z' & 38 & 50779421 & y & \\
5600_z & 26 & 3115126 & y & 2\\
5600_z' & 30 & 3848044 & y & 2\\
\hline\end{array}$
\end{center}
\end{table}
\end{landscape}}

\section{Linear algebra over the integers}\label{intcomp}

As was already mentioned in Section \ref{sec0}, the basic strategy
of our approach to determine Gram matrices of invariant bilinear forms 
for representations of Iwahori--Hecke algebras is to reduce computational 
linear algebra over the polynomial rings $\Z[X]$ or $\Q[X]$, where from 
now on $X$ denotes our favorite indeterminate, to computational linear 
algebra over the integers $\Z$. Thus in this section we begin by 
describing how we deal with matrices over $\Z$, where we restrict 
ourselves to the aspects needed for our present application. 

Let us fix the following convention: 
For $x,y\in\Z$, not both zero, let $\gcd(x,y)\in\Z$ denote the positive 
greatest common divisor of $x$ and $y$. A vector $0\neq v\in\Q^m$,
where $m\in\N$, is called \emph{primitive}, if actually $v\in\Z^m$, 
and for the greatest common divisor $\gcd(v)$ of its entries we have
$\gcd(v)=1$. Clearly greatest common divisor computations in $\Z$
yield a $\Q$-multiple of $v$ which is primitive.
\nopagebreak
Similarly, a matrix $0\neq A\in\Z^{m\times n}$, where $m,n\in\N$,
is called \emph{primitive}, if actually $A\in\Z^{m\times n}$, and for the
greatest common divisor $\gcd(A)$ of its entries we have $\gcd(A)=1$. 

\clearpage
\absT{Continued fractions and the Euclidean algorithm}
The first computational task we are going to discuss,
in Section \ref{ratrecog} below, is rational number recovery.
This has been discussed in the literature at various places,
see for example \cite{dixon,mon,parker2} or \cite[Section 5.10]{vzG}.
(We also gratefully acknowledge additional private discussions 
with R.~Parker on this topic.) 
Although the ideas pursued in these references are closely
related to ours, none of them completely coincides with our approach,
and proofs (if given at all) are not too elucidating.
Hence we present our approach in detail, for which we need
a few preparations first:
\end{abs}

\subT{Continued fraction expansions}
We recall a few notions from the theory of continued fraction expansions;
as a general reference see for example \cite[Chapter 10]{hw}:
Given $\rho\in\R$ such that $\rho\geq 0$, let
$$ \textrm{cf}[q_1,q_2,\ldots] 
  =q_1+\frac{1}{q_2+\frac{1}{\ddots}} $$
be its \emph{(regular) continued fraction} expansion,
where $q_1\in\N_0$ and $q_i\in\N$ for $i\geq 2$. This
is obtained by letting $q_1:=\lfloor{\rho}\rfloor$, and,
as long as $\rho\neq q_1$, proceeding recursively with 
$\frac{1}{\rho-q_1}$ instead of $\rho$. This process terminates,
after $l\geq 1$ steps say, if and only if $\rho\in\Q$; otherwise we 
let $l:=\infty$. 
Truncating at $i\leq l$ yields the $i$-th \emph{convergent} 
$\rho_i:=\textrm{cf}[q_1,\ldots,q_i]\in\Q$ of $\rho$, 
hence we may write 
$\rho_i:=\frac{\sigma_i}{\tau_i}$, where $\sigma_i,\tau_i\in\N_0$ 
such that $\tau_i\geq 1$ and $\gcd(\sigma_i,\tau_i)=1$.
Letting additionally $\sigma_{-1}:=0$ and $\tau_{-1}:=1$, as well as
$\sigma_0:=1$ and $\tau_0:=0$, for $i\geq 1$ we get by induction 
$$ \sigma_i=q_i\sigma_{i-1}+\sigma_{i-2}
\quad\textrm{and}\quad 
\tau_i=q_i\tau_{i-1}+\tau_{i-2} .$$
Hence the sequences
$[\sigma_1,\sigma_2,\ldots,\sigma_l]$ and $[\tau_2,\tau_3,\ldots,\tau_l]$ 
are strongly increasing.

Now let $\rho=\frac{a}{b}\in\Q$, where $a,b\in\N$.
Then the continued fraction expansion of $\rho$
can be computed by the extended Euclidean algorithm, see
\cite[Algorithm 1.3.6]{cohen}, as follows:
Setting $r_0:=a$ and $r_1:=b$, 
for $1\leq i\leq l$ let recursively $q_i\in\N_0$ and 
$$ r_{i+1}:=r_{i-1}-q_ir_i\in\N_0 
\quad\textrm{such that}\quad 
r_{i+1}<r_i, $$
where $l\geq 1$ is defined by $r_l>0$ but $r_{l+1}=0$;
actually we have $q_i\geq 1$ for $i\geq 2$, and of course $r_l=\gcd(a,b)$. 
Hence the sequence $[r_1,\ldots,r_{l+1}]$ has
non-negative entries and is strongly decreasing.
Moreover, setting $s_0:=1$ and $t_0:=0$, as well as $s_1:=0$ and $t_1:=1$, 
and for $1\leq i\leq l$ letting recursively 
$$ s_{i+1}:=s_{i-1}-q_is_i
\quad\textrm{and}\quad 
t_{i+1}:=t_{i-1}-q_it_i ,$$
we get $r_i=s_ia+t_ib$.
Then it is immediate by induction that
$\sigma_i=(-1)^i\cdot t_{i+1}$ and $\tau_i=(-1)^{i+1}\cdot s_{i+1}$,
for $i\geq 1$, and hence 
$$ \rho_i = - \frac{t_{i+1}}{s_{i+1}}, 
\quad\textrm{where}\quad 
\gcd(s_{i+1},t_{i+1})=1, 
\quad\textrm{for}\quad 
1\leq i\leq l. $$
Hence the sequences $[-s_3,s_4,-s_5\ldots,\pm s_{l+1}]$ and
$[-t_2,t_3,-t_4\ldots,\pm t_{l+1}]$ 
have positive entries and are strongly increasing. 
Finally, a direct computation yields
$$ \rho-\rho_i 
=\frac{a}{b}-\frac{\sigma_i}{\tau_i}
=\frac{\tau_i a-\sigma_i b}{\tau_i b}
=\frac{s_{i+1} a+t_{i+1}b}{s_{i+1} b}
=\frac{r_{i+1}}{bs_{i+1}},
\quad\textrm{for}\quad
1\leq i\leq l .$$
\end{sub}

\subT{Another view on the Euclidean algorithm}
For $a,b\in\N$ we consider the $\Z$-lattice
$$ L_{a,b}:=\langle[1,a],[0,b]\rangle_\Z\subseteq\Z^2 .$$ 
Then we have $|\det(L_{a,b})|=b$, and it is immediate that 
$[x,y]\in\Z^2$ is an element of $L_{a,b}$ if and only if $y\equiv ax\pmod{b}$.
Note that if $0\neq [x,y]\in L_{a,b}$ is primitive, 
then we necessarily have $\gcd(x,b)=1$.
Moreover, the extended Euclidean algorithm shows that
$L_{a,b}=\langle[s_i,r_i],[s_{i+1},r_{i+1}]\rangle_\Z$, for all 
$0\leq i\leq l$. We collect a few properties of $L_{a,b}$:
\end{sub}

\begin{lem}\label{lattice}
(a)
For all $0\leq i\leq l+1$ we have 
$\langle[s_i,r_i]\rangle_\Q\cap L_{a,b}=\langle[s_i,r_i]\rangle_\Z$.

(b)
We have $\langle[s_i,r_i]\rangle_\Q=\langle[s_j,r_j]\rangle_\Q$,
where $1\leq i,j\leq l+1$, if and only if $i=j$.
\end{lem}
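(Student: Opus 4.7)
The plan is to reduce both parts to two facts already established in the setup: (i) the equality $L_{a,b} = \langle [s_i, r_i], [s_{i+1}, r_{i+1}] \rangle_\Z$ valid for all $0 \leq i \leq l$, and (ii) the strong monotonicity of the sequence $[r_1, \ldots, r_{l+1}]$ together with $r_i > 0$ for $i \leq l$ and $r_{l+1} = 0$.

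For part (a), I would argue that each $[s_i, r_i]$ with $0 \leq i \leq l+1$ is part of a $\Z$-basis of $L_{a,b}$: it appears in the basis $\{[s_i, r_i], [s_{i+1}, r_{i+1}]\}$ when $i \leq l$, and in the basis $\{[s_l, r_l], [s_{l+1}, r_{l+1}]\}$ when $i = l+1$. Any vector belonging to some $\Z$-basis of a lattice is automatically primitive in it: if $\lambda \cdot [s_i, r_i] \in L_{a,b}$ for some $\lambda \in \Q$, then expanding this element in the chosen $\Z$-basis forces the coefficient $\lambda$ to be an integer. This settles (a) at once.

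For part (b), the ``only if'' direction is trivial, and I would treat the ``if'' direction by contradiction. Suppose $1 \leq i < j \leq l+1$ satisfy $\langle [s_i, r_i] \rangle_\Q = \langle [s_j, r_j] \rangle_\Q$. By part~(a) both vectors are primitive in $L_{a,b}$, so two such lattice vectors spanning the same $\Q$-line must differ by a unit of $\Z$; hence $[s_i, r_i] = \pm [s_j, r_j]$. Comparing second coordinates gives $r_i = \pm r_j$, and since $r_i, r_j \geq 0$ this forces $r_i = r_j$. If both are zero, then $i = j = l+1$, contradicting $i < j$; otherwise $i, j \leq l$ and the strong decrease of $r_1, \ldots, r_l$ again forces $i = j$. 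Either way we reach the desired contradiction.

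I do not anticipate a serious obstacle. The argument is essentially bookkeeping built on top of the structure of the extended Euclidean algorithm recalled in the setup; the only point requiring a little care is the case split in part (b) between the terminal index $j = l+1$ (where $r_j = 0$) and interior indices $j \leq l$ (where strict monotonicity of the remainders is used).
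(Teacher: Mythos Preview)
Your argument is correct. Part (b) is handled the same way as in the paper: once (a) is known, the two vectors on the same $\Q$-line must differ by a unit of~$\Z$, and then the strict monotonicity of the non-negative remainders $r_1,\ldots,r_{l+1}$ forces $i=j$.

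For part (a), however, your route is genuinely different and more direct than the paper's. You use the fact, already stated in the setup, that $L_{a,b}=\langle[s_i,r_i],[s_{i+1},r_{i+1}]\rangle_\Z$ for $0\leq i\leq l$; since $L_{a,b}$ has rank~$2$, these two generators form a $\Z$-basis, and any member of a $\Z$-basis of a lattice is automatically primitive in that lattice. The paper instead first proves an auxiliary inequality (if $[x,y]\in L_{a,b}$ with $0<|y|<r_i$ then $|x|\geq|s_{i+1}|$) and derives (a) from it by contradiction, treating the case $i=l+1$ separately by a direct divisibility argument. That auxiliary inequality is essentially the ``best approximation'' property of continued-fraction convergents and has independent interest, but it is not reused later in the paper, so your shortcut loses nothing for the purposes at hand and is cleaner.
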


\begin{proof} 
We first show that whenever $[x,y]\in L_{a,b}$ such that $0<|y|<r_i$, 
for some $0\leq i\leq l$, then $|x|\geq|s_{i+1}|$:
We may assume that $i\geq 2$. Let $c,d\in\Z$ such that 
$$ [x,y]=[c,d]\cdot
\begin{bmatrix} s_i & r_i \\ s_{i+1} & r_{i+1} \\ \end{bmatrix} ,$$
where we may assume that $c\neq 0$, which entails $d\neq 0$ as well. 
Since $r_i>r_{i+1}\geq 0$, this implies $c\cdot d<0$.
Since the sequence $[s_2,-s_3,s_4,-s_5\ldots,\pm s_{l+1}]$ has
positive entries, we get 
$|x|=|cs_i+ds_{i+1}|=|c|\cdot|s_i|+|d|\cdot|s_{i+1}|\geq |s_{i+1}|$,
as asserted.

(a)
We may assume that $i\geq 2$. Moreover, for $i=l+1$ letting
$[x,0]\in L_{a,b}$, it is immediate from $ax\equiv 0\pmod{b}$ that 
$|s_{l+1}|=\frac{b}{r_l}=\frac{b}{\gcd(a,b)}$ divides $x$.
Hence we may assume $i\leq l$, too. Then let 
$d\neq 1$ be a divisor of $\gcd(s_i,r_i)$ such that 
$\frac{1}{d}\cdot[s_i,r_i]\in L_{a,b}$. Then we have $0<|\frac{r_i}{d}|<r_i$
and $|\frac{s_i}{d}|<|s_i|\leq|s_{i+1}|$, contradicting the statement above.

(b)
It follows from (a) that there are $c,d\in\Z$ such that
$[s_j,r_j]=c\cdot [s_i,r_i]$ and $[s_i,r_i]=d\cdot [s_j,r_j]$.
Hence we get $cd=1$, and since the sequence $[r_1,\ldots,r_{l+1}]$ has
non-negative entries and is strongly decreasing, we infer $r_i=r_j$ and $i=j$.
\qed\end{proof}

Note that the statement in (b) is trivial if $[s_i,r_i]$ is primitive,
that is $\gcd(s_i,r_i)=1$. But this is not always fulfilled, as the
example in \cite[Example 5.27]{vzG} shows.

\begin{prop}\label{minimum} 
(a)
Let $[x,y]\in L_{a,b}$ such that $x\neq 0$ and $|x|\cdot |y|\leq\frac{b}{2}$. 
Then we have $[x,y]\in\langle[s_i,r_i]\rangle_\Z$, for a unique
$2\leq i\leq l+1$. In particular, if $[x,y]$ is primitive
then we have $[x,y]=[s_i,r_i]$ or $[x,y]=-[s_i,r_i]$.

(b)
Assume there is $0\neq [x,y]\in L_{a,b}$ such that 
$\|[x,y]\|:=\sqrt{x^2+y^2}<\sqrt{b}$.
Then there is a unique $2\leq i\leq l+1$ such that
$\|[s_i,r_i]\|<\sqrt{b}$, and the shortest non-zero elements of $L_{a,b}$ 
are precisely $[s_i,r_i]$ and $-[s_i,r_i]$.
\end{prop}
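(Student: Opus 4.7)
The plan is to reduce Part (a) to the classical theory of best rational approximations via Legendre's theorem, and then to deduce Part (b) by combining Part (a) with a determinant argument for two-dimensional lattices.

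For Part (a), I would first dispose of the case $y=0$ directly: since $[x,0]\in L_{a,b}$ forces $ax\equiv 0\pmod{b}$, the integer $x$ must be a multiple of $b/\gcd(a,b)=|s_{l+1}|$, so $[x,0]\in\langle[s_{l+1},0]\rangle_\Z=\langle[s_{l+1},r_{l+1}]\rangle_\Z$, and we may take $i=l+1$. For the case $y\neq 0$, I would assume without loss of generality that $x>0$ and write $y=ax-kb$ for the unique $k\in\Z$; the hypothesis $|x|\cdot|y|\leq\frac{b}{2}$ then translates into $\left|\frac{a}{b}-\frac{k}{x}\right|\leq\frac{1}{2x^2}$. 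Setting $h:=\gcd(k,x)$ and writing $k=hk'$, $x=hx'$ with $\gcd(k',x')=1$, we still have $\left|\frac{a}{b}-\frac{k'}{x'}\right|\leq\frac{1}{2(x')^2}$, so Legendre's theorem gives that $k'/x'$ is a convergent $\sigma_j/\tau_j$ of $a/b$ for some $j$. The identifications $\tau_j=(-1)^{j+1}s_{j+1}$ and $\sigma_j=(-1)^j t_{j+1}$ together with $r_{j+1}=s_{j+1}a+t_{j+1}b$ yield $a\tau_j-\sigma_j b=(-1)^{j+1}r_{j+1}$, and hence $[x,y]=h\cdot(-1)^{j+1}[s_{j+1},r_{j+1}]$, giving the required index $i=j+1$. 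Uniqueness of $i$ is immediate from Lemma \ref{lattice}(b), and the ``In particular'' statement follows because $[x,y]$ being primitive forces $h=1$ (and $[s_i,r_i]$ to be primitive as well).

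For Part (b), the inequality $2|x|\cdot|y|\leq x^2+y^2=\|[x,y]\|^2<b$ gives $|x|\cdot|y|<\frac{b}{2}$, and $x\neq 0$ because $[0,y]\in L_{a,b}$ with $y\neq 0$ would force $|y|\geq b$, contradicting $\|[0,y]\|<\sqrt{b}$. Part (a) then yields a unique $i$ with $[x,y]=c\cdot[s_i,r_i]$ for some $c\in\Z\setminus\{0\}$, whence $\|[s_i,r_i]\|\leq\|[x,y]\|<\sqrt{b}$. For uniqueness of $i$ among the convergents, I would argue that for any $j\neq i$, Lemma \ref{lattice}(b) ensures $[s_i,r_i]$ and $[s_j,r_j]$ are $\Q$-linearly independent, so they span a sublattice of $L_{a,b}$ of positive integer index; hence $\|[s_i,r_i]\|\cdot\|[s_j,r_j]\|\geq|\det([s_i,r_i],[s_j,r_j])|\geq b$, forcing $\|[s_j,r_j]\|>\sqrt{b}$. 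For the claim on shortest non-zero elements, any such $v$ satisfies $\|v\|\leq\|[x,y]\|<\sqrt{b}$, so the same argument gives $v=c'\cdot[s_i,r_i]$ for some non-zero $c'\in\Z$, and minimality of $\|v\|$ together with $\|v\|=|c'|\cdot\|[s_i,r_i]\|$ forces $|c'|=1$.

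The main obstacle I expect is the invocation of Legendre's theorem at the boundary case $|x|\cdot|y|=\frac{b}{2}$ (which requires $b$ to be even), since the classical statement uses the strict inequality $\left|\frac{a}{b}-\frac{k}{x}\right|<\frac{1}{2x^2}$. The cleanest remedy is to adapt the standard proof directly: supposing $k'/x'$ is not a convergent, pick $j$ with $\tau_j\leq x'<\tau_{j+1}$ and combine the B\'ezout estimate $\left|\frac{k'}{x'}-\frac{\sigma_j}{\tau_j}\right|\geq\frac{1}{x'\tau_j}$ (valid since the two fractions in lowest terms are unequal) with the sharpness $\left|\frac{a}{b}-\frac{\sigma_j}{\tau_j}\right|=\frac{r_{j+1}}{b|s_{j+1}|}$ derived earlier in the text to reach a contradiction, treating the equality case by a tighter analysis of when both bounds can be attained simultaneously. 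A minor secondary task is tracking the sign alternation $(-1)^{j+1}$ through the identifications to verify that $[x,y]$ is indeed an integer multiple of $[s_{j+1},r_{j+1}]$.
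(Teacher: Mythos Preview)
Your proof is correct and follows essentially the same route as the paper: Legendre's theorem for Part~(a), and for Part~(b) the AM--GM reduction $2|x|\cdot|y|\leq x^2+y^2$ to Part~(a) followed by Hadamard's inequality for uniqueness. The one noteworthy difference in execution is that you reduce $k/x$ to lowest terms and compute the integer scalar $h(-1)^{j+1}$ explicitly, thereby obtaining $[x,y]\in\langle[s_i,r_i]\rangle_\Z$ directly; the paper instead only derives $y/x=r_i/s_i$, hence $[x,y]\in\langle[s_i,r_i]\rangle_\Q$, and then invokes Lemma~\ref{lattice}(a) to pass from the $\Q$-span to the $\Z$-span. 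Your explicit computation thus bypasses Lemma~\ref{lattice}(a) for this proposition. Your attention to the boundary case $|x|\cdot|y|=b/2$ in Legendre's theorem is also well placed; the paper simply cites Hardy--Wright and does not discuss it.
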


\begin{proof} 
(a)
Since $[x,y]\in L_{a,b}$ there is $z\in\Z$ such that $y=xa-zb$. Then we have 
$$ \Big|\frac{a}{b}-\frac{z}{x}\Big| = \frac{|y|}{b\cdot|x|}
= \frac{|x|\cdot|y|}{b\cdot|x|^2} \leq \frac{1}{2\cdot|x|^2} .$$
Thus by Legendre's Theorem, see \cite[Section 10.15, Theorem 184]{hw}, 
we infer that $\frac{z}{x}$ occurs as a convergent in the continued
fraction expansion of $\rho=\frac{a}{b}$, that is, there is
$2\leq i\leq l+1$ such that $\frac{z}{x}=\rho_{i-1}$. This yields
$$ \frac{y}{x} = \frac{xa-zb}{x} = a-\frac{zb}{x} 
=a-b\rho_{i-1}=b(\rho-\rho_{i-1})=\frac{r_i}{s_i} .$$
Hence we have $[x,y]\in\langle[s_i,r_i]\rangle_\Q$, and thus
from Lemma \ref{lattice} we get $[x,y]\in\langle[s_i,r_i]\rangle_\Z$,
together with the uniqueness statement.

(b)
Assume first that $x=0$, then by Lemma \ref{lattice} we infer
that $b$ divides $y$, and hence $\|[x,y]\|\geq b\geq\sqrt{b}$, a 
contradiction. Hence we have $x\neq 0$.
Moreover, from $(x-y)^2=x^2+y^2-2xy\geq 0$ we get 
$2\cdot|x|\cdot|y|\leq x^2+y^2=\|[x,y]\|^2<b$,
hence from (a) we see that there is $2\leq i\leq l+1$ such that 
$[x,y]=\langle[s_i,r_i]\rangle_\Z$. Thus in particular we have 
$\|[s_i,r_i]\|<\sqrt{b}$. 

In order to show uniqueness, and the statement on shortest elements, 
let $0\neq [x',y']\in L_{a,b}$ such that 
$\|[x',y']\|<\sqrt{b}$. Then, as above, there is $2\leq i\leq l+1$ 
such that $[x',y']=\langle[s_j,r_j]\rangle_\Z$, hence in particular we have
$\|[s_j,r_j]\|<\sqrt{b}$. 
Then Hadamard's inequality, see \cite[Theorem 16.6]{vzG}, implies that
$$ \det\Big(\begin{bmatrix} s_i & r_i \\ s_j & r_j \\ \end{bmatrix}\Big)
\leq \|[s_i,r_i]\| \cdot \|[s_j,r_j]\| < b .$$
Since $|\det(L_{a,b})|=b$ divides 
$\det\Big(\begin{bmatrix} s_i & r_i \\ s_j & r_j \\ \end{bmatrix}\Big)$
this entails $\langle[s_i,r_i]\rangle_\Q=\langle[s_j,r_j]\rangle_\Q$,
and hence $i=j$ by Lemma \ref{lattice}.
\qed\end{proof}

A comparison of the above treatment with the references already mentioned
seems to be in order: The statement of Proposition \ref{minimum}(a) is 
roughly equivalent to \cite[Theorem]{dixon} and \cite[Theorem 1]{mon}, 
respectively. Alone, the proof given in \cite{dixon} appears to be too
concise, and provides a slightly worse bound for $b$ to be large enough.
And \cite[Theorem 1]{mon} is attributed in turn to \cite{davguywan}, while
for a proof the reader is referred to \cite{vzG}. Unfortunately,
\cite[Theorem 5.26]{vzG} is not immediately conclusive for the 
statements under consideration here.

The main difference between the above-mentioned approaches and ours is 
the break condition used to actually determine the index $i$ 
referred to in Proposition \ref{minimum}(a): In \cite{davguywan,dixon,vzG} 
a bound on the residues $r_i$ is used, while in \cite[Section 3]{mon} the
quotients $q_i$ are considered instead (yielding a randomized algorithm).
In contrast, in our decisive Proposition \ref{minimum}(b) we are using the
minimum of the lattice $L_{a,b}$, which hence treats both the $r_i$ and $s_i$
(in other words the the unknown numbers $y$ and $x$) on a ``symmetric'' 
footing. To our knowledge, this point of view is new, its algorithmic 
relevance being explained below.

\absT{Recovering rational numbers}\label{ratrecog}
We are now prepared to describe our first computational task,
which will appear both in computations over $\Z$ in Section \ref{padicdec}, 
and over the polynomial ring $\Q[X]$ in Section \ref{polrecog}:

Let $x\in\N$ and $0\neq y\in\Z$ such that $\gcd(x,y)=1$.
Assume we are given $a,b\in\N$ such that $\gcd(x,b)=1$ and 
$y\equiv ax\pmod{b}$; 
note that since $x$ is invertible modulo $b$
we may write $\frac{y}{x}\equiv a\pmod{b}$ instead, 
which we will feel free to do if convenient.
Now, if $b$ is large enough compared to $x$ and $|y|$, the task
is to recover $\frac{y}{x}\in\Q$ from its congruence class $a\pmod{b}$.

In view of Proposition \ref{minimum}(b), this is straightforward:
Assuming that $x^2+y^2<b$, the $\Z$-lattice 
$L_{a,b}=\langle[1,a],[0,b]\rangle_\Z\subseteq\Z^2$ 
has precisely two shortest non-zero elements, namely the primitive
elements $\pm [x,y]$. In other words, the rational number
$\frac{y}{x}\in\Q$ can be found by computing a 
shortest non-zero element of $L_{a,b}$. 
This in turn can be done algorithmically by the Gau{\ss} reduction
algorithm for $\Z$-lattices of rank $2$, see \cite[Algorithm 1.3.14]{cohen}. 
Moreover, compared to the general case, 
for the particular lattice $L_{a,b}$ we have a better break condition:
We may stop early as soon as we have found an element $[x,y]\in L_{a,b}$
such that $x^2+y^2<b$. If then $[x,y]$ is primitive, the rational number 
$\frac{y}{x}$ fulfills all assumptions made, where of course 
its correctness has to be verified independently.
Otherwise, if $[x,y]$ is not primitive, or the shortest element 
$[x',y']\in L_{a,b}$ found fulfills $x'^2+y'^2\geq b$, then we report failure.
Thus, in practice, we choose $b$ small, and rerun the above algorithm
with $b$ increasing, until we find a valid candidate passing independent
verification.

At this stage, we should point out the algorithmic advantage
of our approach, compared to the other ones mentioned: 
The latter refer to the convergents of continued fraction expansions, 
and thus to the full sequence of non-negative residues
of the extended Euclidean algorithm. In contrast,
the Gau{\ss} reduction algorithm to find a lattice minimum proceeds 
by iterated pair reduction, starting with the pair $[0,b]$ and $[1,a]$.
Although this is essentially equivalent to running the extended
Euclidean algorithm on $a$ and $b$, here we are allowed to use best 
approximation. This amounts to using numerically smallest residues,
instead of non-negative ones as was necessary in the context of
continued fraction expansions. 
Although we have not carried out a detailed comparison,
it is well-known that this saves a non-negligible
amount of quotient and remainder steps.
\end{abs}

\absT{Finding linear combinations}\label{padicdec}
We are now going to describe \emph{the} basic task we are faced with
in order to be able to do computational linear algebra over $\Z$.
To do so, we of course avoid the Gau{\ss} algorithm over $\Q$,
but we also do not refer to pure ``lattice algorithms'', as they are
called in \cite[Section 2.1]{cohen}, for example those to compute 
Hermite normal forms or reduced lattice bases described in 
\cite[Section 2.4--2.7]{cohen}.
Instead, we use a modular technique, which is a keystone to make 
use of the ideas of the \MA{} in the framework of the \IMA. 
To our knowledge, this has only been discussed very briefly 
in the literature, for example in \cite{dixon,parker2}.
Moreover, our approach differs from those cited, at least in detail;
in particular, \cite{dixon} only allows for regular square matrices.

To describe the computational task, we again need some preparations first:
Given a (rectangular) matrix $A\in\Z^{m\times n}$, with $\Q$-linearly 
independent rows $w_1,\ldots,w_m\in\Z^n$, where $m,n\in\N$, let 
$$ L:=\langle w_1,\ldots,w_m\rangle_\Z\leq\Z^n $$ 
be the $\Z$-lattice spanned by the rows of $A$, 
and let $L\leq\widehat{L}\leq\Z^n$ be its 
\emph{pure closure} in $\Z^n$, that is the smallest pure $\Z$-sublattice
of $\Z^n$ containing $L$. Then the index $\det(L):=[\widehat{L}\colon L]$
is finite; of course, if $m=n$ then we have $\det(L)=|\det(A)|$.
Thus for any vector $v\in\Z^n$, we have $v\in\widehat{L}$ 
if and only of there is $a\in\N$ such that $av\in L$;
in this case, if $a$ is chosen minimal then it divides $\det(L)$.

Now, given $v\in\Z^n$, the task is to decide whether or not
$v\in\widehat{L}$, and if this is the case to compute
$a_1,\ldots,a_m\in\Z$ and $a\in\N$ such that $\gcd(a,a_1,\ldots,a_m)=1$ and 
$$ v=\frac{1}{a}\cdot\sum_{j=1}^m a_j w_j
=\frac{1}{a}\cdot [a_1,\ldots,a_m] \cdot A ;$$
in this case $a$ and the $a_i$ are uniquely determined.
\end{abs}

\subT{The $p$-adic decomposition algorithm}
To do so, we choose a (large) prime $p$. Then reduction modulo $p$ yields 
the matrix $\overline{A}\in\F_p^{m\times n}$ over the prime field $\F_p$.
We assume that the rows $\overline{w}_1,\ldots,\overline{w}_m\in\F_p^n$ 
of $\overline{A}$ are $\F_p$-linearly independent as well;
otherwise we choose another prime $p$. 
By the structure theory of finitely generated modules over 
principal ideal domains, this condition is equivalent to saying
$\overline{\widehat{L}}=\overline{L}$, which in turn is equivalent to 
$p$ not dividing $\det(L)$. In particular, the independence condition 
on $\overline{w}_1,\ldots,\overline{w}_m\in\F_p^n$ is fulfilled for all but 
finitely many primes $p$.

Thus we have $v\in\widehat{L}$ if and only if 
$\overline{v}\in\overline{L}
=\langle\overline{w}_1,\ldots,\overline{w}_m\rangle_{\F_p}$,
solving the decision problem. Furthermore, if $v\in\widehat{L}$ 
then set $v_0:=v$, and for $d\in\N_0$ 
proceed successively as follows: Since $v_d\in\widehat{L}$, there
are $[a_{d,1},\ldots,a_{d,m}]\in\Z^m$ such that 
$-\frac{p}{2}<a_{d,j}\leq\frac{p}{2}$ for all $1\leq j\leq m$, and 
$$ \overline{v}_d
=\sum_{j=1}^m \overline{a}_{d,1}\overline{w}_j
=[\overline{a}_{d,1},\ldots,\overline{a}_{d,m}]
   \cdot\overline{A}\in\F_p^n .$$
Then we let
$$ v_{d+1}:=\frac{1}{p}\cdot
\Big(v_d-[a_{d,1},\ldots,a_{d,m}]\cdot A\Big)\in\Z^n .$$
Hence we have $v_{d+1}\in\widehat{L}$ as well, and we may recurse. 
This yields
$$ v \equiv
\Big(\sum_{i=0}^d p^i\cdot [a_{i,1},\ldots,a_{i,m}]\Big)\cdot A \equiv 
\Big[\sum_{i=0}^d p^i a_{i,1},\ldots,\sum_{i=0}^d p^i a_{i,m}\Big]\cdot A 
    \pmod{p^{d+1}\Z^n} ,$$
or equivalently
$$ \frac{a_j}{a}\equiv\sum_{i=0}^d p^i a_{i,j}\pmod{p^{d+1}}, 
\quad\textrm{for all}\quad 1\leq j\leq m .$$

Thus, if $v\in L$, or equivalently $a=1$, then 
since $-\frac{p^{d+1}}{2}<\sum_{i=0}^d p^i a_{i,j}\leq\frac{p^{d+1}}{2}$
there is some $d\in\N_0$ such that $v_{d+1}=0$, implying that
$a_j=\sum_{i=0}^d p^i a_{i,j}$, for all $1\leq j\leq m$,
without further independent verification necessary.
Otherwise, if $v\in\widehat{L}\setminus L$, then applying 
rational number recovery for some $d\in\N_0$ large enough,
see Section \ref{ratrecog},
reveals the vector $\frac{1}{a}\cdot [a_1,\ldots,a_m]\in\Q^m$;
note that under the assumptions made $p$ does not divide $a$.
In the latter case correctness is independently verified 
by computing $[a_1,\ldots,a_m]\cdot A\in\Z^n$ and checking whether
it equals $av\in\Z^n$.
\end{sub}

\subT{Modular computations}
In practice, to check $\overline{w}_1,\ldots,\overline{w}_m\in\F_p^n$
for $\F_p$-linear independence, and to compute the vectors 
$[\overline{a}_{d,1},\ldots,\overline{a}_{d,m}]\in\F_p^m$ 
we use ideas taken from the \MA{}. In particular, in order to 
keep the depth $d$ needed smallish, but still to be able to make
efficient use of fast arithmetic over small finite prime fields, 
we choose the prime $p$ amongst the largest primes smaller than $2^8=256$.
(In our application we for example use $p=251$ as the default prime.)
\end{sub}

\absT{Nullspace}\label{intnullspace}
In the framework of the \IMA{} there is a general method to 
compute a $\Z$-basis of the row kernel of a matrix with entries 
in $\Z$, see \cite{parker2}. But in view of the application
to row kernels of matrices over $\Q[X]$ in Section \ref{polnullspace},
here we only deal with the following restricted nullspace problem:  

Given a matrix $A\in\Q^{m\times n}$, where $m,n\in\N$,
such that $\dim_{\Q}(\ker(A))=1$,
where $\ker(A)$ denotes the row kernel of $A$, 
compute a primitive vector $v\in\Z^m$ such that 
$\ker(A)=\langle v\rangle_{\Q}$;
then $v$ is unique up to sign.

To do so, by going over to a suitable $\Q$-multiple
we may assume that $A\in\Z^{m\times n}$. Let $w_1,\ldots,w_m\in\Z^n$
be the rows of $A$. We may assume that $w_1\neq 0$, since otherwise
we trivially set $v:=[1,0,\ldots,0]\in\Z^m$.
Then for $2\leq i\leq m$ we successively check,
using the $p$-adic decomposition algorithm in Section \ref{padicdec},
whether or not $w_i\in\langle w_1,\ldots,w_{i-1}\rangle_{\Q}$.
If this is not the case, that is $\{w_1\ldots,w_i\}$ is $\Q$-linearly
independent, then if $\overline{w}_1,\ldots,\overline{w}_i\in\F_p^n$
turns out to be $\F_p$-linearly independent we increment $i$, while 
otherwise we return failure in order to choose another prime $p$.
If $\{w_1\ldots,w_i\}$ is $\Q$-linearly dependent, then the
$p$-adic decomposition algorithm returns 
$a_1,\ldots,a_{i-1}\in\Z$ and $a\in\N$ such that
$\gcd(a,a_1,\ldots,a_{i-1})=1$ and 
$w_i=\frac{1}{a}\cdot\sum_{j=1}^{i-1} a_j w_j$. Thus 
$v:=[a_1,\ldots,a_{i-1},-a,0,\ldots,0]\in\ker(A)\leq\Z^m$ is primitive.
\end{abs}

\absT{Inverse}\label{intinverse}
Matrix inversion over $\Q$, from the point of view of reducing to 
computations over $\Z$ as much as possible, can be formulated as
the following task:

Given a matrix $A\in\Q^{n\times n}$, where $n\in\N$, 
such that $\det(A)\neq 0$, compute $B\in\Z^{n\times n}$ and $c\in\N$,
such that $A^{-1}=\frac{1}{c}\cdot B\in\Q^{n\times n}$ 
and the overall greatest common divisor $\gcd(B,c)$ of 
the entries of $B$ and $c$ equals $\gcd(B,c)=1$; then $(B,c)$ is unique.

To do so, by going over to a suitable $\Q$-multiple 
we may assume that $A\in\Z^{n\times n}$. 
Then the equation $BA=c\cdot E_n$, where $E_n$ 
denotes the identity matrix, implies that $\gcd(B)$ divides $c$,
and hence $B$ is necessarily primitive. 
Solving the equations $\mathcal X A=E_n$, for the unknown matrix 
$\mathcal X\in\Q^{n\times n}$, amounts to writing the 
rows of the identity matrix as $\Q$-linear combinations
of the rows of $A$, which is done using the $p$-adic decomposition
algorithm in Section \ref{padicdec}; recall that the rows of $A$ indeed are 
assumed to be $\Q$-linearly independent. 
\end{abs}

\absT{The exponent of a matrix}\label{intexp}
Given a square matrix $A\in\Z^{n\times n}$ such that $\det(A)\neq 0$
as above, the number $c\in\N$ found in the expression 
$A^{-1}=\frac{1}{c}\cdot B$, where $B\in\Z^{n\times n}$ is 
chosen to be primitive, turns out to have another interpretation:
 
Let $\textrm{im}(A)\leq\Z^n$ be the $\Z$-span of the rows of $A$.
By the structure theory of finitely generated modules over principal
ideal domains, the annihilator of the $\Z$-module $\Z^n/\textrm{im}(A)$ 
is a non-zero ideal of $\Z$, the positive generator $\exp(A)$ of which
is called the \emph{exponent} of $A$. 
Moreover, $\exp(A)$ divides $\det(A)$, which in turn divides some 
power of $\exp(A)$. Thus the prime divisors of $\exp(A)$ are
precisely the primes $p\in\Z$ such that $\overline{A}\in\F_p^{n\times n}$ 
is not invertible.

Now, actually $\exp(A)$ and $c$ coincide:
From $BA=c\cdot E_n$ we conclude that 
$(c\Z)^n\leq\textrm{im}(A)$, hence $\exp(A)$ divides $c$;
conversely, since $(\exp(A)\cdot\Z)^n\leq\textrm{im}(A)$
there is $B'\in\Z^{n\times n}$ such that $B'A=\exp(A)\cdot E_n$,
implying that $\exp(A)\cdot B=c\cdot B'$, which by the primitivity of 
$B$ shows that $c$ divides $\exp(A)$.
In other words, computing the inverse of $A$ as described in Section
\ref{intinverse} also yields a method to compute $\exp(A)$.
\end{abs}
  
\section{Computing with polynomials}\label{polcomp}

Having the necessary pieces of linear algebra over the integers in place,
in this section we describe computational aspects of single polynomials,
before we turn to linear algebra over polynomials rings in 
Section \ref{matcomp}.

\absT{Polynomial arithmetic}
As our general strategy is to use linear algebra over $\Z$ or $\Q$
to do linear algebra over $\Z[X]$ or $\Q[X]$, for all arithmetically
heavy computations we recurse to $\Z$ or $\Q$. Consequently, for the
remaining pieces of explicit computation in $\Z[X]$ or $\Q[X]$ we may
use a simple straightforward approach:

We use our own standard arithmetic 
for polynomials over $\Z$ or $\Q$, where a polynomial 
$0\neq f=\sum_{i=0}^d z_iX^i\in\Q[X]$ is just represented by its coefficient
list $[z_0,\ldots,z_d]\in\Q^{d+1}$ of length $d+1$, where $d=\deg(f)$. 
Thus we avoid structural overhead as much as possible, and may
use directly the facilities to handle row vectors provided by \GAP{}.
But we would like to stress that this is just tailored for our 
aim of doing linear algebra over polynomial rings,
and not intended to become a new general-purpose polynomial arithmetic.
For example, we are not providing asymptotically fast multiplication,
as is for example described in \cite[Section 8.3]{vzG}.

In particular, we only rarely need to compute polynomial greatest common 
divisors. Hence we avoid sophisticated (modular) techniques, as are 
for example described and compared in \cite[Chapter 6]{vzG}, 
but we are content with a simple variant of the Euclidean algorithm:
Assuming that the operands have integral coefficients, by going over to 
$\Q$-multiples if necessary, in order to avoid coefficient explosion
we just use denominator-free pseudo-division as described in
\cite[Algorithm 3.1.2]{cohen}, and Collins's sub-resultant algorithm
given in \cite[Algorithm 3.3.1]{cohen}, albeit the latter without 
intermediate primitivisation. 

On the other hand, we very often have to evaluate polynomials 
at various places, where
our strategy is to use as few of these specializations as possible,
so that evaluation at distinct places is done step by step.
Thus we are not in a position to use multi-point evaluation techniques, 
as are for example described in \cite[Section 10.1]{vzG}. 
Hence we are just using the Horner scheme, which under these circumstances
is well-known to need the optimal number of multiplications.

We now describe the special tasks needed to be solved in our approach:
\end{abs}

\absT{Recovering polynomials}\label{polrecog}
The aim is to recover a polynomial with rational coefficients,
which we are able to evaluate at arbitrary integral places, from as
few such evaluations (at ``small'' places) as possible. More precisely:

Let $0\neq f:=\sum_{i=0}^d z_iX^i\in\Q[X]$ be a polynomial of
degree $d=\deg(f)\in\N_0$, having coefficients $z_i=\frac{y_i}{x_i}\in\Q$,
where $x_i\in\N$ and $y_i\in\Z$ such that $\gcd(x_i,y_i)=1$. 
Then the task is to find pairwise coprime places 
$b_1,\ldots,b_k\in\Z\setminus\{0,\pm 1\}$, for some (small) $k\in\N$,
such that the degree $d$ and the coefficients $z_0,\ldots,z_d$ of $f$ can
be computed from the values $f(b_1),\ldots,f(b_k)\in\Q$ alone.
Note that, in particular, we do not assume that $k>d$, so that
polynomial interpolation is not applicable. 
(Actually, in our application we often enough have $k\ll d$, where for
example $k\sim 5$, but $d\lesssim 200$.)

To this end, let $b:=\prod_{j=1}^k |b_j|\in\N$, and assume that we have 
$\gcd(x_i,b)=1$ and $x_i^2+y_i^2<b$ for all $0\leq i\leq d$.
Hence the congruence classes $z_i\equiv \frac{y_i}{x_i}\pmod{b_j}$ 
and $f(b_j)\pmod{b_j}$ are well-defined, and for the constant
coefficient of $f$ we get
$$ z_0\equiv\sum_{i=0}^d z_ib_j^i\equiv f(b_j)\pmod{b_j},
\quad\textrm{for}\quad
1\leq j\leq k .$$  
Thus by the Chinese Remainder Theorem, 
see for example \cite[Theorem 1.3.9]{cohen}, there is a unique congruence
class $a\pmod{b}$, where $a\in\Z$, such that $a\equiv z_0\pmod{b}$.
To compute $a\in\Z$, we let $a_j\in\Z$ such that
$$ f(b_j)\equiv a_j\pmod{b_j},
\quad\textrm{for}\quad 1\leq j\leq k .$$  
An application of Chinese remainder lifting in $\Z$ to the congruence classes 
$a_1\pmod{b_1},\ldots,a_k\pmod{b_k}$ yields the congruence class $a\pmod{b}$, 
and by our choice of $b$ applying rational number recovery as described 
in Section \ref{ratrecog} reveals $z_0\in\Q$.
Now we recurse to $\widetilde{f}:=\frac{f-z_0}{X}\in\Q[X]$, 
whose value at the place $b_j$ can of course be determined directly
from $f(b_j)$ as $\widetilde{f}(b_j)=\frac{f(b_j)-z_0}{b_j}\in\Q$.
\end{abs}

\subT{Chinese remainder lifting} 
Hence, apart from rational number recovery, the key computational
task to be solved is to perform Chinese remainder lifting in $\Z$:

We are using the straightforward approach based on the extended 
Euclidean algorithm, as is described in \cite[Section 1.3.3]{cohen}. 
Since we are computing many lifts with respect to the same places
$b_1,\ldots,b_k$, we make use of a precomputation step, 
as in \cite[Algorithm 1.3.11]{cohen}. But, since again for reasons of
time and memory efficiency we are choosing small places $b_j$, 
the specially tailored approach in \cite[Algorithm 1.3.11]{cohen} 
to keep the intermediate numbers occurring small, at the expense of
needing more multiplications, does not pay off as experiments show.
Moreover, as we are computing the values $f(b_j)$ 
for $1\leq j\leq k$ step by step, where even the number $k$ of places
is not determined in advance, we cannot take advantage of 
fast Chinese remainder lifting techniques, as are described for example
in \cite[Section 10.3]{vzG}, either.

Our strategy is to rerun the above algorithm with $k$ increasing,
choosing small integral $2\leq b_1<b_2<\cdots<b_k$, 
and to discard quickly erroneous guesses by an independent verification,
until the correct answer passing the verification is found.
By the above discussion, this happens after finitely many iterations.
Before that, if $b=|\prod_{j=1}^k b_j|$ is too small, or not coprime 
to all the denominators $x_i$, the Chinese remainder lifting process
does not terminate, or it terminates with a wrong guess. 
To catch the first case, we impose a degree bound, and stop the 
lifting process with a failure message if it is exceeded, in order 
to increment $k$. (In our application, $200$ turned out to be a
suitable degree bound in all cases.)

To catch the second case, we only allow for denominators $x_i$ dividing an
imposed bound. This is justified, since rational number recovery
as described in Section \ref{ratrecog} is a trade-off between finding the 
numerator $y$ and the denominator $x$ of the rational number 
$\frac{y}{x}$ to be reconstructed: In practice, we typically
encounter small denominators $x$ and large numerators $y$,
which escape the Gau{\ss} reduction algorithm if $b$ is chosen too small, 
since then the latter tends to return a larger denominator $x'>x$ 
and a smaller numerator $|y'|<|y|$. 
(In our application, denominator bounds such as small $2$-powers,
or $12$, or $20$ turned out to be sufficient in all cases.)
\end{sub}

\absT{Degree detection}\label{degdetect}
We keep the setting of Section \ref{polrecog}. The technique to be described 
now has arisen out of an attempt to determine the degree of $f$
without determining its coefficients. Actually, it deals with the
following more general situation (whose relevance for our computations
will be explained in Section \ref{catchproj} below):

Assume that instead of the values $f(b_1),\ldots,f(b_k)$ we are only 
able to compute ``rescaled values'' $a_1 f(b_1),\ldots,a_k f(b_k)\in\Q$,
with scalar factors $a_j\in\Q$ such that $a_j>0$, which are only known to
come from a finite pool $\mathcal R$ of positive rational numbers associated
with $f$. 
Thus the task now becomes to find $k\in\N$ and coprime places 
$b_1,\ldots,b_k\in\Z\setminus\{0,\pm 1\}$ as above, allowing to 
determine $f$ up to some positive rational scalar multiple, that is to find
$af\in\Q[X]$, for some $a\in\Q$ such that $a>0$; note that 
this also determines all the quotients $\frac{a_j}{a}$.

To this end, we let $\alpha_1,\ldots,\alpha_d\in\C$ be the complex roots
of $f$, and set $\mu:=\max\{0,|\alpha_1|,\ldots,|\alpha_d|\}$.
Moreover, since $\mathcal R$ is a finite set, we have 
$$\delta:=\min\{|\ln(a')-\ln(a)|\in\R;a,a'\in\mathcal R,\, a\neq a'\}>0 .$$
Now, let $k\geq 2$, and for the places $b_1,\ldots,b_k$ 
we additionally assume that 
$$ (1+2d)\cdot\mu<b_1<\cdots<b_k
\quad\textrm{and}\quad 
\ln(b_k)-\ln(b_1)<\delta ;$$
hence, in particular, the $f(b_j)$ are non-zero and have the same sign.
The necessity of these choices will become clear below. But this forces
us to show that for all $k\geq 2$ and all
$x>0$ and $\delta>0$ there actually exist pairwise coprime integers 
$b_1<\cdots<b_k$ such that $x<b_1$ and $\ln\big(\frac{b_k}{b_1}\big)<\delta$.
Indeed, we are going to show that the latter can always be chosen
to be primes (where the mere existence proof to follow is impractical,
but in practice considering small primes
works well, see Example \ref{degdetectex}):

Let $p_0<p_1<\cdots$ be the sequence of all primes 
exceeding $x$, and assume to the contrary that for all $k$-subsets thereof,
$q_1<\cdots<q_k$ say, we have $\ln\big(\frac{q_k}{q_1}\big)\geq\delta$. 
Then we have $p_{k-1}\geq e^\delta\cdot p_0$, and thus 
$p_{j(k-1)}\geq e^{j\delta}\cdot p_0$, for all $j\in\N$.
Using the prime number function
$\pi(x):=|\{p\in\N;p\textrm{ prime},p\leq x\}|$ this implies
$$ \pi(e^{j\delta}\cdot p_0)\leq\pi(p_0)+j(k-1). $$
From this we get 
$$ \lim_{j\rightarrow\infty}
   \frac{\pi(e^{j\delta}\cdot p_0)\cdot\ln(e^{j\delta}\cdot p_0)}
        {e^{j\delta}\cdot p_0}
\leq\lim_{j\rightarrow\infty}
   \frac{\big(\pi(p_0)+j(k-1)\big)\cdot\big(j\delta+\ln(p_0)\big)}
        {e^{j\delta}\cdot p_0}
=0 ,$$
contradicting the Prime Number Theorem, 
see \cite[Section 1.8, Theorem 6]{hw}, saying that 
$\lim_{x\rightarrow\infty}\frac{\pi(x)\cdot\ln(x)}{x}=1$.

\end{abs}

\subT{Growth behavior of polynomials}
We now consider the growth behavior of the polynomial $f$.
For $x>\mu$ we have
$$ \frac{\partial}{\partial x}\big(f(x)\big) 
= z_d\cdot\frac{\partial}{\partial x}\big(\prod_{r=1}^d(x-\alpha_r)\big)
= f(x)\cdot\sum_{r=1}^d\frac{1}{x-\alpha_r} ,$$
implying
$$ \frac{\partial}{\partial x}\big(\ln(f(x))\big)
= \frac{\partial}{\partial x}\big(f(x)\big)\cdot\frac{1}{f(x)}
= \sum_{r=1}^d\frac{1}{x-\alpha_r} .$$ 
Thus, for $1\leq i<j\leq k$, by the mean value theorem for derivatives
there is $b_i<\beta<b_j$ such that
$$ \frac{\ln(f(b_j))-\ln(f(b_i))}{\ln(b_j)-\ln(b_i)} 
= \sum_{r=1}^d\frac{\beta}{\beta-\alpha_r} .$$
Since by assumption 
$b_i>(1+2d)\cdot\mu\geq(1+2d)\cdot|\alpha_r|$, we have
$$ \Big|\frac{\beta}{\beta-\alpha_r}-1\Big| 
=\Big|\frac{\alpha_r}{\beta-\alpha_r}\Big| 
\leq \frac{|\alpha_r|}{\beta-|\alpha_r|} 
<\frac{|\alpha_r|}{(1+2d)\cdot|\alpha_r|-|\alpha_r|} 
\leq\frac{1}{2d} $$ 
for all $1\leq r\leq d$. 
All differences $\beta-\alpha_r\in\C$ having positive real parts, we get
$$ d<\frac{\ln(f(b_j))-\ln(f(b_i))}{\ln(b_j)-\ln(b_i)}<d+\frac{1}{2} .$$
Moreover, by assumption we have
$0<\ln(b_j)-\ln(b_i)<\delta\leq|\ln(a_j)-\ln(a_i)|$, hence
$$ \Big|\frac{\ln(a_j)-\ln(a_i)}{\ln(b_j)-\ln(b_i)}\Big|>1 .$$ 
Now, letting $\lfloor x\rceil:=\lfloor x+\frac{1}{2}\rfloor\in\Z$
denote the integer nearest to $x\in\R$, we set
$$ d_{ij} := \Big\lfloor
\frac{\ln(a_jf(b_j))-\ln(a_if(b_i))}{\ln(b_j)-\ln(b_i)}
\Big\rceil=\Big\lfloor
\frac{\ln(f(b_j))-\ln(f(b_i))}{\ln(b_j)-\ln(b_i)}
+\frac{\ln(a_j)-\ln(a_i)}{\ln(b_j)-\ln(b_i)}
\Big\rceil $$
for all $1\leq i,j\leq k$ such that $i\neq j$; note that $d_{ij}=d_{ji}$.
Hence from the above estimates we infer that $d_{ij}=d$ 
if and only if $a_i=a_j$. In particular, all these numbers $d_{ij}$ 
coincide if and only if $a_1=\cdots=a_k$, hence in this case immediately
determining $d$.
\end{sub}

\subT{Combinatorial translation}
Thus our task can now be rephrased in combinatorial terms as follows: 
For $c\in\Z$ let $\Gamma_{d+c}$ be the undirected graph
on the vertex set $\{1,\ldots,k\}$, whose edges are the
$2$-subsets $\{i,j\}\subseteq\{1,\ldots,k\}$ such that $d_{ij}=d+c$.

Then by the above discussion the connected components of $\Gamma_d$
are complete graphs, whose vertex sets coincide with the sets of 
$j\in\{1,\ldots,k\}$ such that the associated scalars $a_j$ assume one
and the same value. 
On the other hand, if $\Gamma_{d+c}$, for some $c\neq 0$,
has a complete connected component with $r\geq 2$ vertices 
$b_{j_1}<\cdots<b_{j_r}$, 
then for all $i,j\in\{j_1,\ldots,j_r\}$ such that $i<j$ we have
$$ c-1<\Big|\frac{\ln(a_j)-\ln(a_i)}{\ln(b_j)-\ln(b_i)}\Big|<c+\frac{1}{2} .$$
Thus we infer that the sequence $a_{j_1},\ldots,a_{j_r}$
is strictly increasing if $c>0$, and strictly decreasing if $c<0$.
In particular this implies that $r\leq|\mathcal R|$.
In other words, as soon as we find a complete connected component
of a graph $\Gamma_{d+c}$ having more than $|\mathcal R|$ elements, 
then we may conclude that $c=0$, and we have determined $d$. Moreover,
if $k>|\mathcal R|^2$ than this case actually happens.

Our algorithm to determine the degree $d$ of $f$, and $af$ for some 
$a>0$, is now straightforward:
Again our strategy is to increase $k$ step by step, and to 
choose places $2\leq b_1<b_2<\cdots<b_k$ such that $b_1$ is growing 
and $\ln(b_k)-\ln(b_1)$ tends to zero. Having made a choice, we compute
the numbers $d_{ij}\in\Z$ for all $1\leq i<j\leq k$; 
note that here we do not see a way to avoid using non-exact floating point 
arithmetic (to evaluate logarithms), while everywhere else we are 
computing exactly. For all numbers $d'\in\Z$ thus occurring
we then determine the graph $\Gamma_{d'}$. Amongst all the graphs 
found we choose one, again $\Gamma_{d'}$ say, having a complete connected 
component of maximal cardinality, with vertex set
$\mathcal J\subseteq\{1,\ldots,k\}$ say. Then we run polynomial recovery,
see Section \ref{polrecog}, using the places $\{b_j;j\in\mathcal J\}$
and the values $\{a_jf(b_j);j\in\mathcal J\}$, with degree bound $d'$. 
\end{sub}

\absT{An example}\label{degdetectex}
Here is an example to illustrate the above process.
(It is a modified version of an example which actually 
occurred in our application.)
Assume as places $b_j$, for $1\leq j\leq k=13$, we have chosen the rational
primes between $29$ and $79$, and evaluating the unknown polynomial 
$f$ has resulted in the list of values $a_jf(b_j)$ given in 
Table \ref{degdetecttbl}; the scalars $a_j$ are of course not known either.

Then it turns out that the numbers $d'\in\Z$, where $1\leq i<j\leq 13$, 
come from an $34$-element subset of $\{-27,\ldots,71\}$.
For seven of them the associated graph $\Gamma_{d'}$ 
has a connected component with at least three vertices, but 
only for two of them we find a complete connected component amongst them:
The graph $\Gamma_{7}$ has a complete connected component
consisting of the vertices $\mathcal B_0:=\{47, 61, 79\}$,
while the graph $\Gamma_{13}$ consists of three connected components,
which all are complete, having the vertices 
$$ \mathcal B_1:=\{37, 43, 47, 53, 67, 73\}, \quad 
\mathcal B_2:=\{31, 41, 61, 71\}, \quad
\mathcal B_3:=\{29, 59, 79\} .$$
Running polynomial recovery, see Section \ref{polrecog},
using the places $\mathcal B_0$
fails by exceeding the degree bound. But running it using 
$\mathcal B_1$ yields $af=\sum_{i=0}^{13} z_iX^i\in\Z[X]$, where
$$ [z_0,\ldots,z_{13}]=[1,4,8,11,12,12,12,12,12,12,11,8,4,1] ,$$
while running it using $\mathcal B_2$ and $\mathcal B_3$ yields 
$\frac{1}{5}\cdot af\in\Q[X]$ and $\frac{1}{25}\cdot af\in\Q[X]$, respectively.
Thus we indeed have $d=\deg(f)=13$, and assuming that $a=1$ 
we have determined the scalars
$a_j$, for $1\leq j\leq 13$, as well. Note that the bounds assumed
in Section \ref{polrecog} are fulfilled; and the roots
of $f$ turning out to be complex roots of unity, implying $\mu=1$,
the bounds assumed in Section \ref{degdetect} are fulfilled 
as well.

It should be noted that for the preceding discussion we have chosen $k$
large enough to exhibit the occurrence of the erroneous set $\mathcal B_0$,
for which we indeed observe that the associated scalars $a_j$ are pairwise
distinct. But this also reveals another practical observation, at least for 
polynomials occurring in the applications in Section \ref{matcomp}: 
The scalars $a_j$, here coming from the 
three-element set $\mathcal R=\{1,\frac{1}{5},\frac{1}{25}\}$, 
typically are not uniformly distributed throughout
$\mathcal R$, but the scalar $a_j=1$ occurs much more frequently than
the other ones.

As was already mentioned, in practice we instead increase $k$ step by step.
Then for the smallest $k\geq 3$ such that the graph $\Gamma_{13}$ 
has a complete connected component with at least three vertices, 
that is for $k=6$, we find the set $\mathcal B:=\{37, 43, 47\}$ 
of places, indeed being associated to the case $a_j=1$. Now
polynomial recovery using $\mathcal B$ readily returns $f$; 
note that the bounds assumed in Section \ref{polrecog} are still fulfilled.
\end{abs}

\begin{table}\caption{An example for degree detection}\label{degdetecttbl}
$$ \begin{array}{|r|r|r||r|}
\hline
j & b_j & a_jf(b_j) & a_j \\
\hline
\hline
 1 & 29 & 471132000262895400 & \frac{1}{25} \rule{0em}{1.2em} \\
 2 & 31 & 5556161802048405504 & \frac{1}{5} \rule{0em}{1.2em} \\
 3 & 37 & 271378870503231142344 & 1 \rule{0em}{1.2em}  \\
 4 & 41 & 203982274364082601464 & \frac{1}{5} \rule{0em}{1.2em} \\
 5 & 43 & 1885780898401789278912 & 1 \rule{0em}{1.2em} \\
 6 & 47 & 5946135224244400779264 & 1 \rule{0em}{1.2em} \\
 7 & 53 & 28077873950889396256392 & 1 \rule{0em}{1.2em} \\
 8 & 59 & 4493456499569142283200 & \frac{1}{25} \rule{0em}{1.2em} \\
 9 & 61 & 34577756822169042208584 & \frac{1}{5} \rule{0em}{1.2em} \\
10 & 67 & 581970465933078043504704 & 1 \rule{0em}{1.2em} \\
11 & 71 & 246522309921169431519744 & \frac{1}{5} \rule{0em}{1.2em} \\
12 & 73 & 1766015503219395154436952 & 1 \rule{0em}{1.2em} \\
13 & 79 & 196427398952317706342400 & \frac{1}{25} \rule{0em}{1.2em} \\
\hline
\end{array} $$
\end{table}



\absT{Catching projectivities}\label{catchproj}
We now have to explain where the conditions imposed in 
Section \ref{degdetect} come from: Typically, for example for 
the tasks described in Sections \ref{polnullspace} and \ref{polinverse},  
our aim is to determine a matrix over $\Z[X]$ or $\Q[X]$
by computing various specializations first, that is evaluating at
certain places $b_1,\ldots,b_k$, performing some linear algebra over
$\Z$ or $\Q$, as described in Section \ref{intcomp}, for each of the
specializations, and then lifting back to polynomials as explained in 
Section \ref{polrecog}. But the linear algebra step in between might only be 
unique up to a scalar in $\Q$, which additionally depends on the
particular specialization considered. On the other hand, the matrix  
we are looking for might also only be unique up to a scalar in $\Q(X)$.

Let us now, again, agree on the following convention:
Given $f,g\in\Z[X]$, not both zero, let $\gcd(f,g)\in\Z[X]$ denote 
the polynomial greatest common divisor of $f$ and $g$ with 
positive leading coefficient.  
A vector $0\neq v\in\Q[X]^m$, where $m\in\N$,
is called \emph{primitive}, if actually $v\in\Z[X]^m$, and for the 
greatest common divisor $\gcd(v)$ of its entries we have $\gcd(v)=1$.
Clearly greatest common divisor computations in $\Z$ and in $\Z[X]$
yield a $\Q(X)$-multiple of $v$ which is primitive. 
Similarly, a matrix $A\in\Q[X]^{m\times n}$, where $m,n\in\N$, 
is called \emph{primitive}, if actually $A\in\Z[X]^{m\times n}$, and 
for the greatest common divisor $\gcd(A)$ of its entries we have $\gcd(A)=1$.
\end{abs}

\subT{\bf Specializing primitive vectors}
Hence, in the above context the task is to recover a primitive vector 
$[f_1,\ldots,f_m]\in\Z[X]^m$ not from specializations
$[f_1(b_j),\ldots,f_m(b_j)]\in\Z^m$, for $1\leq j\leq k$,
but from ``rescaled'' versions
$[a_jf_1(b_j),\ldots,a_jf_m(b_j)]\in\Q^m$ instead.
This places us in the setting of Section \ref{degdetect}, 
but it remains to justify the assumption that the scalars $a_j\in\Q$ 
involved indeed come from a finite pool:
\end{sub}

\begin{prop}\label{polgcd}
Let $f_1,\ldots,f_m\in\Z[X]$, where $m\in\N$, such that
$\gcd(f_1,\ldots,f_m)=1\in\Z[X]$.
Then there is a finite set $\mathcal P\subseteq\N$ such that 
for all $b\in\Z$ we have 
$$ \gcd(f_1(b),\ldots,f_m(b))\in\mathcal P .$$ 
\end{prop}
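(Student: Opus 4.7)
The plan is to reduce the statement to a Bézout-type identity in $\Z[X]$. Since $\gcd(f_1,\ldots,f_m)=1$ in $\Z[X]$, the same gcd equals $1$ in the Euclidean domain $\Q[X]$, so by iterating the extended Euclidean algorithm there exist $g_1,\ldots,g_m\in\Q[X]$ with $\sum_{i=1}^m g_if_i=1$. Clearing the (finitely many) denominators appearing in the $g_i$ by a common positive integer $N$, one obtains $h_1,\ldots,h_m\in\Z[X]$ such that
\[ \sum_{i=1}^m h_if_i = N \in \N. \]

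Now specialize at an arbitrary $b\in\Z$: the identity becomes $\sum_{i=1}^m h_i(b)\,f_i(b)=N$ in $\Z$. In particular, not all of $f_1(b),\ldots,f_m(b)$ can vanish (since $N\neq 0$), so their greatest common divisor $\gcd(f_1(b),\ldots,f_m(b))\in\N$ is well-defined; and by the displayed identity it divides $N$. Hence one may take $\mathcal P$ to be the (finite) set of positive divisors of $N$.

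The main (and only) obstacle worth mentioning is checking that the reduction from a $\Q[X]$-Bézout identity to a $\Z[X]$-Bézout identity, up to the positive integer factor $N$, is legitimate in this generality: one has to be sure that both the $g_i$ and the equation $\sum g_if_i=1$ are genuinely available in $\Q[X]$ from the hypothesis $\gcd_{\Z[X]}(f_1,\ldots,f_m)=1$. This follows because the ideal generated by the $f_i$ in $\Q[X]$ is principal, generated by their $\Q[X]$-gcd, which is a unit (a non-zero rational constant) as soon as they share no non-constant polynomial factor over $\Q$ — and this in turn is immediate from $\gcd_{\Z[X]}(f_1,\ldots,f_m)=1$. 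Everything else is routine.
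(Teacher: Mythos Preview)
Your proof is correct and takes a genuinely different, more direct route than the paper's. The paper argues by induction on $m$: writing $g=\gcd(f_1,\ldots,f_{m-1})$ and $f_i=g\cdot g_i$, it applies the inductive hypothesis to $g_1,\ldots,g_{m-1}$ and handles the remaining factor via the resultant $\rho=\mathrm{res}(g,f_m)$, which bounds $\gcd(g(b),f_m(b))$ for all $b$. Your argument bypasses both the induction and the resultant by passing once to the Euclidean domain $\Q[X]$, invoking B\'ezout there, and clearing denominators to get a single integer $N$ that does the job uniformly. Your approach is shorter and conceptually cleaner; the paper's approach, on the other hand, makes the bound somewhat more explicit in terms of resultants (which are directly computable from the coefficients without running the extended Euclidean algorithm), and fits the paper's general theme of reducing questions over $\Z[X]$ to concrete integer data. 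Both yield a finite $\mathcal P$ consisting of the positive divisors of a single integer.
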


\begin{proof} 
Note first that by assumption $f_1,\ldots,f_m$ do not have 
any common zeroes, so that $\gcd(f_1(b),\ldots,f_m(b))\in\N$
is well-defined for any $b\in\Z$.  
We proceed by induction on $m\in\N$. For $m=1$ we have
$f_1=\pm 1$, and we may let $\mathcal P:=\{\pm 1\}$.
Hence let $m\geq 2$, where we may assume that all the $f_i$,
for $1\leq i\leq m$, are non-constant. Letting 
$g:=\gcd(f_1,\ldots,f_{m-1})\in\Z[X]$ we have $\gcd(g,f_m)=1$
Letting $g_i:=f_i/g\in\Z[X]$ for $1\leq i\leq m-1$,
we have $\gcd(g_1,\ldots,g_{m-1})=1$, thus by induction let 
$\mathcal Q\subseteq\N$ be a set as asserted associated with
$g_1,\ldots,g_{m-1}$. 
Now, given $b\in\Z$, we may write
$$ x:=\gcd(f_1(b),\ldots,f_m(b))
=\gcd(g(b)g_1(b),\ldots,g(b)g_{m-1}(b),f_m(b))  $$
as $x=yz$, where $y=\gcd(g(b),f_m(b))$,
and $z$ divides $\gcd(g_1(b),\ldots,g_{m-1}(b),f_m(b))$.
Hence $z$
divides $\gcd(g_1(b),\ldots,g_{m-1}(b))$, and thus divides an element of 
$\mathcal Q$. 
Moreover, from $\gcd(g,f_m)=1$ we 
infer that the resultant $\rho:=\textrm{res}(g,f_m)\in\Z$ is different
from zero, see \cite[Corollary 6.20]{vzG}, which by
\cite[Corollary 6.21]{vzG} implies that $y=\gcd(g(b),f_m(b))$ divides $\rho$.
Thus the set $\mathcal P$ of all positive divisors of the elements of 
$\rho\mathcal Q:=\{\rho r\in\N;r\in\mathcal Q\}$ is as desired.
\qed\end{proof}

\section{Linear algebra over polynomial rings}\label{matcomp}

As was already mentioned, our general strategy to determine matrices 
over $\Z[X]$ or $\Q[X]$ is to specialize first at integral places,
to apply linear algebra techniques as described in Section \ref{intcomp}
to the matrices over $\Z$ or $\Q$ thus obtained, and subsequently to
recover the polynomial entries in question by the Chinese remainder
lifting technique described in Section \ref{polrecog}, applying degree
detection as described in Section \ref{degdetect} if necessary.
In this section we describe how we can do linear algebra over
$\Z[X]$ or $\Q[X]$ using this approach.

Since we are faced with both sparse and dense matrices,
we keep two corresponding formats for matrices over polynomial rings.
(In our application, representing matrices for $W$-graph representations,
see Definition \ref{wgraph}, are extremely sparse, while
Gram matrices for them, see Remark \ref{edirem1}, typically 
are dense; see also Example \ref{gramex}).
We have conversion and multiplication routines between them,
but whenever it comes to linear algebra computations we always use the 
dense matrix format. From the arithmetical side, we are only using 
standard matrix multiplication, but no asymptotically faster methods,
as are for example indicated in \cite[Section 12.1]{vzG}.

\absT{Nullspace}\label{polnullspace}
We have developed a solution to the following restricted 
nullspace problem only (which is sufficient for our application):

Given a matrix $A\in\Q[X]^{m\times n}$, where $m,n\in\N$, 
such that $\textrm{rk}_{\Q[X]}(\ker(A))=1$, 
the task is to determine a primitive vector $v\in\Z[X]^m$ such that
$\ker(A)=\langle v\rangle_{\Q[X]}$; then the vector $v$ is unique up to sign.

To do so, by going over to a suitable $\Q(X)$-multiple 
we may assume that $A\in\Z[X]^{m\times n}$ is primitive.
Then we specialize the matrix $A$ successively at integral places
$b_1,\ldots,b_k$, yielding matrices $A(b_j)\in\Z^{m\times n}$.
Since the rank condition on $A$ is equivalent to saying that
$\det(A')=0$ for all $(m\times m)$-submatrices $A'$ of $A$, while 
there is an $((m-1)\times(m-1))$-submatrix $A''$ of $A$ such that 
$\det(A'')\neq 0$, we have $\textrm{rk}_{\Z}(\ker(A(b)))\geq 1$
for any $b\in\Z$, and for all but finitely many such $b$ we indeed 
have $\textrm{rk}_{\Z}(\ker(A(b)))=1$.
Thus we may assume that all the chosen specializations 
$A(b_j)$ also fulfill $\textrm{rk}_{\Z}(\ker(A(b_j)))=1$.
Note that this provides an implicit check whether the 
rank condition on $A$ indeed holds.

Hence we are in a position to compute the row kernels 
$\ker(A(b_j))=\langle v_j\rangle_\Z\leq\Z^m$ as described in 
Section \ref{intnullspace},
where the $v_j\in\Z^m$ are primitive, for all $1\leq j\leq k$.
Thus the latter are of the form $v_j=\frac{1}{a_j}\cdot v(b_j)$, 
where $a_j=\gcd(v(b_j))\in\N$, and $v\in\Z[X]^m$ is the desired 
primitive solution vector from above. 
By Proposition \ref{polgcd} we conclude that the scalars $a_j$
involved indeed come from a finite pool only depending on $v$.

Now applying degree detection, see Section \ref{degdetect},
and polynomial recovery, see Section \ref{polrecog},
yields candidate vectors $0\neq\widetilde{v}\in\Q[X]^m$,
which by going over to a suitable $\Q$-multiple can be assumed to 
be primitive. Then the correctness of $\widetilde{v}$ can be independently 
verified by explicitly computing $\widetilde{v}A$ and checking 
whether this is zero.
\end{abs}

\absT{Inverse}\label{polinverse}
Given a matrix $A\in\Q[X]^{n\times n}$,
where $n\in\N$, such that $\det(A)\neq 0$,
the task is to find $B\in\Z[X]^{n\times n}$ and $c\in\Z[X]$,
such that $A^{-1}=\frac{1}{c}\cdot B\in\Q(X)^{n\times n}$ 
and the overall greatest common divisor $\gcd(B,c)\in\Z[X]$ of 
the entries of $B$ and $c$ equals $\gcd(B,c)=1$;
then the pair $(B,c)$ is unique up to sign. 

To do so, by going over to a suitable $\Q$-multiple 
we may assume that $A\in\Z[X]^{n\times n}$.
Thus the equation $BA=c\cdot E_n$
implies that $\gcd(B)$ divides $c$, and hence $B$ is primitive. 
Then we specialize the matrix $A$ successively at integral places
$b_1,\ldots,b_k$, yielding matrices $A(b_j)\in\Z^{n\times n}$.
Since for all but finitely many $b\in\Z$ we have $\det(A(b))\neq 0$, 
we may assume that all the chosen specializations $A(b_j)$ indeed
also fulfill $\det(A(b_j))\neq 0$.
Note that this provides an implicit check whether the 
invertibility condition on $A$ indeed holds.

Hence we are in a position to compute the inverses 
$A(b_j)^{-1}\in\Q^{n\times n}$ as described in Section \ref{intinverse},
yielding $B_j\in\Z^{n\times n}$ and $c_j\in\Z$, such that $B_j$ is primitive
and $A(b_j)^{-1}=\frac{1}{c_j}\cdot B_j$, for all $1\leq j\leq k$.
Thus, if $B\in\Z[X]^{n\times n}$ and $c\in\Z[X]$ are the desired solutions
from above, we infer
$$ B_j=\frac{1}{a_j}\cdot B(b_j)  
\quad\textrm{and}\quad
c_j=\frac{1}{a_j}\cdot c(b_j),
\quad\textrm{where}\quad
a_j:=\gcd(B(b_j),c(b_j))\in\N .$$ 
By Proposition \ref{polgcd} we conclude that the scalars $a_j$
involved indeed come from a finite pool only depending on $B$ and $c$.

Now applying degree detection, see Section \ref{degdetect},
and polynomial recovery, see Section \ref{polrecog}, yields candidate 
solutions $\widetilde{B}\in\Q[X]^{n\times n}$ and $\widetilde{c}\in\Q[X]^n$, 
for which by going over to a suitable $\Q$-multiple we may assume that
$\widetilde{c}\in\Z[X]^n$ and $\widetilde{B}\in\Z[X]^{n\times n}$ 
is primitive. 
Then the correctness of $(\widetilde{B},\widetilde{c})$ can be 
independently verified by explicitly computing $A\widetilde{B}$ and checking
whether it equals $\widetilde{c}\cdot E_n$.
\end{abs}

\absT{The exponent of a matrix}\label{polexp}
In view of the discussion in Section \ref{intexp}, and noting that $\Q[X]$ 
is a principal ideal domain as well, we pursue the analogy between matrix 
inverses over $\Z$ and over $\Q[X]$ still a little further.
Indeed, given a square matrix $A\in\Z[X]^{n\times n}$ such that
$\det(A)\neq 0$ as above, the polynomial $c\in\Z[X]$ 
in the expression $A^{-1}=\frac{1}{c}\cdot B$, where $B\in\Z[X]^{n\times n}$
is chosen primitive, again has another interpretation:

Let the \emph{exponent} $\exp(A)\in\Z[X]$ of $A$ be a primitive generator 
of the annihilator of the $\Q[X]$-module $\Q[X]^n/\textrm{im}(A)$,
where $\textrm{im}(A)\leq\Q[X]^n$ is the $\Q[X]$-span of the rows of $A$;
then $\exp(A)$ is unique up to sign. 
Then, similar to Section \ref{intexp}, we conclude that 
$\exp(A)$ and $c$ are associated in $\Q[X]$, 
and thus the primitivity of $\exp(A)$ yields
$$ c=\gcd(c)\cdot\exp(A)\in\Z[X] .$$

In other words, computing the inverse of $A$ as described in Section
\ref{polinverse} also yields a method to compute the exponent of $A$ 
as $\exp(A)=\frac{1}{\gcd(c)}\cdot c$. Moreover, $c$ governs modular
invertibility of $A$ as follows:
\end{abs}

\begin{prop}\label{polexpprop}
We keep the notation of Section \ref{polexp}.
Let $\{0\}\neq\mathfrak p\lhd\Z[X]$ be a prime ideal, 
let $Q_{\mathfrak p}:=\operatorname{Quot}(\Z[X]/\mathfrak p)$ be 
the field of fractions of the integral domain $\Z[X]/\mathfrak p$,
and let $A_{\mathfrak p}\in(\Z[X]/\mathfrak p)^{n\times n}$
be the matrix obtained from $A$ by reduction modulo $\mathfrak p$. 
Then $A_{\mathfrak p}$ is invertible in $Q_{\mathfrak p}^{n\times n}$ 
if and only if $c\not\in\mathfrak p$.
\end{prop}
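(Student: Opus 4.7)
The plan is to prove the two implications separately, using in each case one of the defining properties of the pair $(B,c)$ produced in Section~\ref{polexp}: the matrix identity $BA=c\cdot E_n$ in $\Z[X]^{n\times n}$, and the primitivity condition $\gcd(B,c)=1$ in $\Z[X]$.

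The easy direction is ``$c\notin\mathfrak{p}$ implies $A_{\mathfrak{p}}$ invertible''. Reducing $BA=c\cdot E_n$ modulo $\mathfrak{p}$ gives $B_{\mathfrak{p}}A_{\mathfrak{p}}=c_{\mathfrak{p}}\cdot E_n$ in $(\Z[X]/\mathfrak{p})^{n\times n}$, where $B_{\mathfrak{p}}$ and $c_{\mathfrak{p}}$ denote the obvious reductions. Since $\Z[X]/\mathfrak{p}$ is an integral domain and $c_{\mathfrak{p}}\neq 0$ by hypothesis, the element $c_{\mathfrak{p}}$ is a unit in $Q_{\mathfrak{p}}$, and $c_{\mathfrak{p}}^{-1}\cdot B_{\mathfrak{p}}$ is then a left, hence two-sided, inverse of $A_{\mathfrak{p}}$ over the field $Q_{\mathfrak{p}}$.

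For the converse I would first prove the auxiliary divisibility $c\mid\det(A)$ in $\Z[X]$. Combining $A^{-1}=\frac{1}{c}\cdot B$ with the classical formula $A^{-1}=\frac{1}{\det(A)}\cdot\operatorname{adj}(A)$ yields the identity $c\cdot\operatorname{adj}(A)=\det(A)\cdot B$ in $\Z[X]^{n\times n}$. Arguing prime-by-prime in the UFD $\Z[X]$, fix an irreducible $\pi\in\Z[X]$ and set $k:=v_\pi(c)$, $\ell:=v_\pi(\det A)$. The case $k=0$ is trivial; otherwise primitivity of $(B,c)$ forces some entry $b_{i_0j_0}$ to satisfy $v_\pi(b_{i_0j_0})=0$, and reading off $\pi$-valuations from the $(i_0,j_0)$-entry of the identity above gives $k+v_\pi\bigl(\operatorname{adj}(A)_{i_0j_0}\bigr)=\ell$, whence $\ell\geq k$. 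Thus $c\mid\det(A)$, and if $c\in\mathfrak{p}$ then $\det(A)\in\mathfrak{p}$ as well, so $\det(A_{\mathfrak{p}})=0$ in $\Z[X]/\mathfrak{p}$ and hence in $Q_{\mathfrak{p}}$, forcing $A_{\mathfrak{p}}$ to be singular over the field $Q_{\mathfrak{p}}$.

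The main obstacle is precisely this divisibility step. A naive mod-$\mathfrak{p}$ reduction of $BA=c\cdot E_n$ would try to derive $B_{\mathfrak{p}}=0$ and $c\in\mathfrak{p}$ and then contradict $\gcd(B,c)=1$ directly, but this is not conclusive when $\mathfrak{p}$ is a non-principal maximal ideal of the form $(p,g(X))$: all entries of $B$ together with $c$ can then lie in $\mathfrak{p}$ while still having global gcd~$1$ in $\Z[X]$. The passage through $\operatorname{adj}(A)$ replaces this by a clean valuation argument at each irreducible of the UFD $\Z[X]$, which sidesteps the pitfall and gives the required divisibility uniformly.
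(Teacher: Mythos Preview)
Your proof is correct. Both directions are handled cleanly: the forward implication via reduction of $BA=cE_n$, and the reverse via the divisibility $c\mid\det(A)$, which you establish by the valuation argument at each irreducible of $\Z[X]$ using $c\cdot\operatorname{adj}(A)=\det(A)\cdot B$ together with $\gcd(B,c)=1$. (One tiny remark: in the valuation step you implicitly use $\operatorname{adj}(A)_{i_0j_0}\neq 0$, which follows since $\det(A)\cdot b_{i_0j_0}\neq 0$ in the domain $\Z[X]$.)

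Your route differs from the paper's. The paper first reduces to the equivalence $A_{\mathfrak p}$ non-invertible $\Leftrightarrow \det(A)\in\mathfrak p$, and then shows the stronger fact that $\det(A)$ and $c$ have \emph{the same set of irreducible divisors} in $\Z[X]$. To do this it classifies the nonzero primes of $\Z[X]$ into the three types $(p)$, $(f)$, $(p,f)$, treats integer primes via the identity $\overline{AB}=\overline{c}E_n$ in $\F_p[X]$, and treats non-constant irreducibles via the exponent interpretation $\exp(A)=c/\gcd(c)$ from Section~\ref{polexp}. Your argument is more economical: you never invoke the classification of primes in $\Z[X]$ nor the exponent, and you only need the one-sided divisibility $c\mid\det(A)$ rather than equality of radicals. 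What the paper's approach buys is the extra information that $\det(A)$ and $c$ share the same prime support (useful elsewhere in the paper); what yours buys is a uniform, case-free proof of exactly the stated proposition. Your closing comment about why the naive reduction fails at maximal ideals $(p,f)$ is well observed and correctly identifies why the adjugate detour is the right fix.
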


\begin{proof}
The prime ideals of $\Z[X]$ being well-understood, 
we are in precisely one of the following cases:
(i) We have $\mathfrak p=(p)$, where $p\in\Z$ is a prime;
then we have $Q_{\mathfrak p}\cong\textrm{Quot}(\F_p[X])=\F_p(X)$,
a rational function field;
(ii) we have $\mathfrak p=(f)$, where $f\in\Z[X]$ is non-constant and
irreducible, hence in particular is primitive; then we have
$Q_{\mathfrak p}\cong\Q[X]/(f)$, an algebraic number field;
(iii) we have $\mathfrak p=(p,f)$, where $p$ and $f$ are as above;
then we have $Q_{\mathfrak p}=\Z[X]/\mathfrak p\cong\F_p[X]/(\overline{f})$, 
a finite field.

Now $A_{\mathfrak p}$ is non-invertible in $Q_{\mathfrak p}^{n\times n}$ 
if and only if $\det(A)\in\mathfrak p$, which holds if and only if there 
is an irreducible divisor of $\det(A)$ being contained in $\mathfrak p$.
Thus is suffices to determine (i) the primes $p\in\Z$, and (ii)
the non-constant irreducible polynomials $f\in\Z[X]$ dividing
$\det(A)$ in $\Z[X]$.

(i) 
From $A^{-1} = \frac{1}{\det(A)}\cdot\textrm{adj}(A)\in\Q(X)^{n\times n}$,
where $\textrm{adj}(A)\in\Z[X]^{n\times n}$ is the adjoint matrix of $A$, 
we infer that $c$ divides $\det(A)$ in $\Z[X]$. Hence any prime $p\in\Z$ 
dividing $\gcd(c)$ also divides $\det(A)$ in $\Z[X]$. 
Conversely, if $p$ does not divide $\gcd(c)$, then $p$-modular reduction
yields $\overline{AB}=\overline{cE_n}\neq 0\in\F_p[X]^{n\times n}$,
hence $\det(\overline{A})\neq 0\in\F_p[X]$.
Hence the primes $p\in\Z$ we are looking for 
are precisely the prime divisors of $\gcd(c)$.

(ii)
This is equivalent to finding the irreducible polynomials in $\Q[X]$
dividing $\det(A)$ in $\Q[X]$.
Again similar to Section \ref{intexp} we conclude that 
the latter are precisely the irreducible polynomials dividing $\exp(A)$.
Hence the polynomials $f\in\Z[X]$ we are looking for are precisely the
non-constant irreducible divisors of $\frac{1}{\gcd(c)}\cdot c$.
\qed\end{proof}

\absT{Product}\label{polproduct}
Given matrices $A\in\Q[X]^{l\times m}$ and $B\in\Q[X]^{m\times n}$,
where $l,m,n\in\N$, the task is to compute their product
$AB\in\Q[X]^{l\times n}$.

This is straightforwardly done: Again, by going over to suitable 
$\Q$-multiples we may assume that $A\in\Z[X]^{l\times m}$ and
$B\in\Z[X]^{m\times n}$.
Then we specialize the matrices $A$ and $B$ successively at integral places
$b_1,\ldots,b_k$, yielding matrices $A(b_j)\in\Z^{l\times m}$ and
$B(b_j)\in\Z^{m\times n}$, whose products $A(b_j)B(b_j)\in\Z^{l\times n}$ 
we compute. Now applying 
polynomial recovery, see Section \ref{polrecog}, yields candidate 
solutions $\widetilde{C}\in\Q[X]^{l\times n}$.
(Note that since no ``rescaling'' takes place here it is not necessary
to apply degree detection.)

As for correctness, there are a few necessary conditions which 
can be used as break conditions in polynomial recovery:
All entries of $\widetilde{C}$ must be polynomials with 
integer coefficients, 
and the degrees of the entries of the input matrices 
yield bounds on the degrees of those of $\widetilde{C}$. 
But these conditions are far from being sufficient, so that, in contrast 
to the tasks in Sections \ref{polnullspace} and \ref{polinverse}, 
here we do not have a general way of independently verifying correctness.
(In our application, as a very efficient break condition
we have used the fact that the entries of $\widetilde{C}$
have to be of a particular form, see Section \ref{final}.)
\end{abs}

\absT{An alternative approach}
The idea of our approach is, essentially, to reduce computations over
$\Q[X]$ to computations over $\Z$, where lifting back to 
polynomials is done in one step by combining
specialization and Chinese remainder lifting. In consequence, we 
almost entirely use arithmetic in characteristic zero (except the use
of a large prime field in the $p$-adic decomposition algorithm in
Section \ref{padicdec}). But it seems to be worth-while to say a few
more words on the following ``two-step'' approach, which was
already mentioned briefly in Sections \ref{sec0} and \ref{wgr1a}:

Assume our aim is to determine a matrix $0\neq A\in\Q[X]^{m\times n}$,
where $m,n\in\N$.
To this end, we choose pairwise distinct places 
$b_1,\ldots,b_k\in\Z$, for some $k\in\N$ such that $k>d$, 
where $d\in\N_0$ is the maximum of the degrees of 
the non-zero entries of $A$. Thus, if we are able to compute the 
specializations $A(b_j)\in\Q^{n\times n}$, for $1\leq j\leq k$,
we may recover the entries of $A$ by polynomial interpolation, 
as for example is described in \cite[Section 10.2]{vzG}. 
In turn, to find the specializations $A(b_j)$
we choose pairwise distinct primes $p_1,\ldots,p_l\in\N$, 
for some $l\in\N$, such that the denominators of all the entries of
$A(b_j)$ are coprime to $p_i$, for all $1\leq j\leq k$ and $1\leq i\leq l$.
Then reduction modulo the chosen primes yields matrices
$A_{p_i}(b_j)\in\F_{p_i}^{m\times n}$. 
Hence, if $\prod_{i=1}^l p_i$ is large enough,
and we are able to compute the modular reductions $A_{p_i}(b_j)$,
for $1\leq i\leq l$, then rational number recovery,
see Section \ref{ratrecog}, reveals the entries of $A(b_j)$.
Hence this reduces finding the matrix $A$ to finding the matrices
$A_{p_i}(b_j)$ over prime fields, for which we in turn may use 
techniques of the \MA{}.

Thus here specialization and Chinese remainder lifting are done 
in two separate steps, aiming at taking advantage of the efficiency of 
computations in prime characteristic. But the
``two-step'' approach has severe disadvantages:
The number $k$ of places to specialize at is at least as large as the
degree of the polynomials in question, hence many more and larger $b_j$ 
than in our approach are needed, increasing time and memory requirements,
presumably drastically. (In our application this means $k\lesssim 200$.)
Moreover, in order to use rational number recovery, the number $l$
of primes used for modular reduction must not be too small, at the 
expense of possibly loosing the very fast arithmetic over small finite 
fields, which otherwise is a major advantage of the \MA{}.

Actually, apart from our own experiences, this kind of approach 
is pursued in \cite{mllt},
and the figures on timings and memory consumption given there
seem to support the above comments. But it should be stressed 
that the emphasis of \cite{mllt} is on parallelizing this kind 
of computations, which we here do not consider at all.
\end{abs}

\section{Computing with representations}\label{repcomp}

As was already mentioned in Section \ref{sec0}, 
in our application we will make use of a suitable variant of the 
``standard basis algorithm'', which was originally used in \cite{parker1} 
for computations over finite fields. In this section we present
the necessary ideas from computational representation theory,
which can be formulated in terms of the following general setting:

\absT{Standard bases}\label{stdbas}
Let $\mathcal A$ be a $K$-algebra, where $K$ is a field, being
generated by the (ordered) set $A_1,\ldots,A_r$, where $r\in\N_0$.
Moreover, let $\mathfrak X\colon\mathcal A\rightarrow K^{n\times n}$
be an absolutely irreducible matrix representation of $\mathcal A$,
where $n\in\N$. Then the task is to find a ``canonical'' $K$-basis of 
the row space $K^n$ with respect to the representation $\fX$,
where we consider right actions, as is common in the computational world. 

To this end, let $A_0\in\mathcal A$ such that $\dim_K(\ker(\fX(A_0)))=1$;
note that whenever $\fX$ is irreducible such an element $A_0$ exists
if and only if $\fX$ is absolutely irreducible.
This leads to the following breadth-first search algorithm;
see also \cite{parker1}:
Choose a seed vector $0\neq u\in\ker(\fX(A_0))$,
let $\mathfrak B:=[u]$ and $\mathfrak T:=[[0,0]]$, and set $i:=1$.
As long as $i$ does not exceed the cardinality of $\mathfrak B$,
let $v$ be the $i$-th element of $\mathfrak B$. Then
for $1\leq j\leq r$ let successively $w:=v\cdot\fX(A_j)$, 
and check whether or not $w\in\langle\mathfrak B\rangle_K$.
If so, then discard $w$; if not,
then append $w$ to $\mathfrak B$, and append $[i,j]$ to $\mathfrak T$.
Having done this for all $j$, increment $i$ and recurse.

Since the growing set $\mathfrak B$ is $K$-linearly independent throughout,
this algorithms terminates after at most $n$ loops. After termination, 
$\langle\mathfrak B\rangle_K$ is a non-zero submodule of the irreducible 
$\mathcal A$-module $K^n$, and thus $\mathfrak B$ indeed is a $K$-basis. 
(Of course, we may terminate early, without any further checking,
as soon as the cardinality of $\mathfrak B$ equals $n$, since from this
point on $\mathfrak B$ would not change anymore anyway.)
The (ordered) set $\mathfrak B$ is called a \emph{standard basis} of $K^n$
with respect to the representation $\fX$, the generators $A_1,\ldots,A_r$, 
and the distinguished element $A_0$, and the ``bookkeeping list'' 
$\mathfrak T$ is called the associated \emph{Schreier tree}.

Strictly speaking, $\mathfrak B$ also depends on the chosen seed vector,
but it is essentially unique in the following sense:
If $0\neq\widetilde{u}\in\ker(\fX(A_0))$ gives rise to the 
standard basis $\widetilde{\mathfrak B}$ with Schreier tree
$\widetilde{\mathfrak T}$, then we have $\widetilde{u}=c\cdot u$, 
for some $0\neq c\in K$, and thus $\widetilde{\mathfrak B}=c\cdot\mathfrak B$ 
and $\widetilde{\mathfrak T}=\mathfrak T$.
Moreover, using the Schreier tree $\mathfrak T=[[i_1,j_j],\ldots,[i_n,j_n]]$,
we may recover $\mathfrak B=[u_1,\ldots,u_n]$, up to a scalar, without
any searching as follows: Choose $0\neq u_1\in\ker(\fX(A_0))$, and for 
$2\leq k\leq n$ let successively $u_k:=u_{i_k}\cdot\fX(A_{j_k})$.
\end{abs}

\subT{In practice}
We are able to run the above standard basis algorithm
in the following particular cases:
If $K$ is a (small) finite field, then this can of course be done using 
ideas from the \MA{}, as is already described in \cite{parker1}.

More important from our point of view is the case $K=\Q$.
Then we may assume that $u\in\Z^n$, and if additionally
$\fX(A_i)\in\Z^{n\times n}$, for all $1\leq i\leq r$, then we have
$\mathfrak B\subseteq\Z^n$, hence the key step in the above algorithm,
to decide whether or not $w\in\langle\mathfrak B\rangle_\Q$, can be done 
using the $p$-adic decomposition algorithm in Section \ref{padicdec},
where whenever $\mathfrak B$ is enlarged we also check whether
its $p$-modular reduction $\overline{\mathfrak B}\subseteq\F_p^n$ 
is $\F_p$-linearly independent; if not, then we return failure in order
to choose another prime $p$. (Note that this is reminiscent of the
strategy in Section \ref{intnullspace}.)
\end{sub} 

\absT{Computing homomorphisms}\label{hom}
We return to the general setting in Section \ref{stdbas}, and let 
$\fX'\colon\mathcal A\rightarrow K^{n\times n}$ be a 
matrix representation of $\mathcal A$, which is equivalent to $\fX$.
Then a standard basis $\mathfrak B'=[v'_1,\ldots,v'_n]$ of $K^n$ 
with respect to the representation $\fX'$ is found by choosing
$0\neq v'_1\in\ker(\fX'(A_0))$ and just applying the Schreier tree
$\mathfrak T=[[i_1,j_j],\ldots,[i_n,j_n]]$ already known from the
standard basis computation for $\fX$ by letting successively
$v'_k:=v'_{i_k}\cdot\fX'(A_{j_k})$, for $2\leq k\leq n$;
note that by assumption we indeed have $\dim_K(\ker(\fX'(A_0)))=1$.

Now let $0\neq C\in K^{n\times n}$ be an $\mathcal A$-homomorphism
from $\fX$ to $\fX'$, that is we have 
$$ \fX(A)\cdot C=C\cdot\fX'(A)
\quad\textrm{for all}\quad A\in\mathcal A ;$$ 
of course, it suffices to require this condition for the generators
$A_1,\ldots,A_r$ only. Since $\fX$ is absolutely irreducible,
it follows that $C\in\GL_n(K)$ and is unique up to a scalar.
Moreover, we have $\ker(\fX(A_0))\cdot C=\ker(\fX'(A_0))$,
and thus going over from the standard bases $\mathfrak B$ and $\mathfrak B'$
with respect to $\fX$ and $\fX'$, respectively,
to the associated invertible matrices $B$ and $B'$ with rows
$v_1,\ldots,v_n\in K^n$ and $v'_1,\ldots,v'_n\in K^n$, respectively,
we get $B\cdot C=B'$, or equivalently 
$$ C=B^{-1}\cdot B'\in\GL_n(K) .$$

Thus to determine $C$ we have to perform the following steps:
find $A_0\in\mathcal A$ such that $\dim_K(\ker(\fX(A_0)))=1$;
compute $\ker(\fX(A_0))\leq K^n$ and $\ker(\fX'(A_0))\leq K^n$;
compute a Schreier tree $\mathfrak T$ with respect to $\fX\cong\fX'$ and
$A_0$; apply the Schreier tree $\mathfrak T$ in order to 
compute standard bases $\mathfrak B$ and $\mathfrak B'$ of $K^n$ 
with respect to $\fX$ and $\fX'$, respectively;
going over to matrices, compute the inverse $B^{-1}\in\GL_n(K)$;
and compute the product $C=B^{-1}\cdot B'\in\GL_n(K)$.
\end{abs}

\subT{In practice}
If $K=\Q(X)$, the nullspaces required can be found as 
described in Section \ref{polnullspace}, where we may assume that 
$v_1$ and $v'_1$ are primitive. Moreover, computing matrix inverses 
and matrix products can be done as described in Sections \ref{polinverse}
and \ref{polproduct}, respectively; by multiplying with a suitable
element of $K$ we may assume that $C$ is primitive as well, then $C$
is unique up to sign. 
Hence for our application it remains to describe how a distinguished 
element and a Schreier tree can be found, and we have to give an efficient
break condition for the algorithm in Section \ref{polproduct}.
\end{sub}

\section{Finding standard bases for $W$-graph representations}\label{bilcomp}

We have now described the necessary infrastructure from 
linear algebra over integral domains, and some relevant general ideas 
how to compute with representations, to proceed to the explicit 
determination of Gram matrices of invariant bilinear forms for
balanced representations of Iwahori--Hecke algebras. We recall 
the setting of Section \ref{wgr1a}, which we keep from now on:

Let $(W,S)$ be a finite Coxeter group, and let $\cH_A\subseteq\cH_K$ 
be the associated generic Iwahori--Hecke algebras with equal parameters over
the ring $A=\Z[v,v^{-1}]$ and the field $K=\Q(v)$, respectively, 
being generated by $\{T_s;s\in S\}$.
Moreover, let $\fX^\lambda\colon\cH_K\rightarrow K^{n\times n}$, 
where $n=d_\lambda$, be a $W$-graph representation associated with
$\lambda\in\Lambda$, and let
$$ (\fX^\lambda)'\colon\cH_K\rightarrow K^{n\times n}\colon 
T_w\mapsto\fX^\lambda(T_{w^{-1}})^{\operatorname{tr}}
\quad\textrm{for all}\quad w\in W .$$

As far as computer implementations are concerned, it is more
convenient and more efficient to work with row vectors instead of
column vectors. Therefore, we will now work throughout with right actions
rather than left actions as in Section \ref{sec:1}.
Our aim is to find a primitive Gram matrix 
$P\in\Z[v]^{n\times n}$ for $\fX^\lambda$, 
that is, using the language of right actions, a primitive matrix such that
$$ \fX^\lambda(T_w)\cdot P = P\cdot(\fX^\lambda)'(T_w)
\quad\textrm{for all}\quad w\in W .$$ 
Thus the task is to find a non-zero $\cH_K$-homomorphism from
$\fX^\lambda$ to $(\fX^\lambda)'$. 
In order to use the approach described in Section \ref{hom},
we proceed as follows, where the basic idea of this strategy 
has already been indicated in \cite[Section 4.3]{gemu}:

\absT{Finding seed vectors}\label{seedvec}
To find a suitable seed vector $u_1\in K^n$ for the standard
basis algorithm with respect to $\fX^\lambda$, we proceed as follows: 

Specializing $v\mapsto 1$ we from $\cH_A$
recover the group algebra $\Q[W]$, and $\fX^\lambda$ corresponds to an 
irreducible representation 
$\mathfrak Y^\lambda\colon\Q[W]\rightarrow\Q^{n\times n}$.
In particular, the index and sign representations of $\cH_K$, given by
$\operatorname{ind}_\cH\colon T_s\mapsto v$ and
$\operatorname{sgn}_\cH\colon T_s\mapsto -v^{-1}$, respectively,
for all $s\in S$,
correspond to the trivial and sign representations of $\Q[W]$, given by
$\operatorname{1}_W\colon s\mapsto 1$ and
$\operatorname{sgn}_W\colon s\mapsto -1$, respectively.

As was observed by Benson and Curtis (see \cite[Section 6.3]{gepf} and the
references there), there is a subset 
$J\subseteq S$ (depending on $\lambda$, and in general not being unique),
such that the restriction of $\mathfrak Y^\lambda$ to the parabolic subgroup
$\widetilde W:=W_J\leq W$ associated with $J$ fulfills
$$ \dim_\Q\big(\Hom_{\Q[\widetilde W]}(
\operatorname{sgn}_{\widetilde W},\mathfrak Y^\lambda)\big)=1 .$$
Note that $J=\emptyset$ and $J=S$ if and only if $\mathfrak Y^\lambda$ 
equals $1_W$ and $\operatorname{sgn}_W$, respectively.  
Letting $\widetilde\cH_K\subseteq\cH_K$ be the parabolic subalgebra  
associated with $J$, this implies
$$ \dim_K\big(\Hom_{\widetilde\cH_K}(
\operatorname{sgn}_{\widetilde\cH},\fX^\lambda)\big)=1 .$$
In other words, we equivalently have
$$\dim_K\Big(\bigcap_{s\in J}\ker\big(\fX^\lambda(T_s+v^{-1})\big)\Big)=1 .$$
Now we are going to use the fact that $\fX^\lambda$ is a
$W$-graph representation:
Using the $I$-sets associated with $\fX^\lambda$, see Definition \ref{wgraph}, 
we conclude that $\ker(\fX^\lambda(T_s+v^{-1}))=\langle e_i;s\in I_i\rangle_K$
for all $s\in S$, where $e_i\in K^n$ denotes the $i$-th ``unit'' vector.
This implies
$$ \bigcap_{s\in J}\ker\big(\fX^\lambda(T_s+v^{-1})\big)
=\langle e_i;J\subseteq I_i\rangle_K .$$
Hence we may let $u_1:=e_i$, where $1\leq i\leq n$ is the unique index
such that $J\subseteq I_i$.

Note that this conversely also yields a way to find all subsets 
of $S$ fulfilling the Benson--Curtis condition:
We run through all subsets $J\subseteq S$, and just check whether
there is precisely one index $1\leq i\leq n$ such that $J\subseteq I_i$.
\end{abs}

\absT{Finding a distinguished element}\label{finddist}
The above immediate approach strongly uses the fact that 
$\fX^\lambda$ is a $W$-graph representation. Thus, in
order to find a suitable 
seed vector $u'_1\in K^n$ for the standard basis algorithm with respect
to $(\fX^\lambda)'$ we specify a distinguished element $T^\lambda\in\cH_K$ 
such that $\dim_K(\ker(\fX^\lambda(T^\lambda)))=1$. Let
$$ T^\lambda:=\big(\sum_{s\in J}T_s\big)+v^{-1}\cdot|J| 
   \in\cH_A\subseteq\cH_K .$$
Hence we have 
$\bigcap_{s\in J}\ker(\fX^\lambda(T_s+v^{-1}))
\leq\ker(\fX^\lambda(T^\lambda))$,
and it remains to be shown that 
$\dim_K(\ker(\fX^\lambda(T^\lambda)))=1$: 

Assume to the contrary that $\dim_K(\ker\big(\fX^\lambda(T^\lambda)))\geq 2$. 
Then letting 
$$ \sigma_J:=\frac{1}{|J|}\cdot\sum_{s\in J}s\in\Q[\widetilde W] ,$$ 
specializing $v\mapsto 1$ shows that 
$\dim_{\Q}(\ker(\mathfrak Y^\lambda(1+\sigma_J)))\geq 2$ as well.
Since for any vector $u\in\ker(\mathfrak Y^\lambda(1+\sigma_J))$ we have 
$u\cdot\mathfrak Y^\lambda(\sigma_J^k)=(-1)^k\cdot u$, for all $k\in\N_0$, 
Lemma \ref{converge} proven below implies that 
$\langle u\rangle_\Q\leq K^n$ is $\Q[\widetilde W]$-invariant
and carries the sign representation. Thus we have
$\dim_\Q(\Hom_{\Q[\widetilde W]}(
\operatorname{sgn}_{\widetilde W},\mathfrak Y^\lambda))\geq 2$,
a contradiction.
\end{abs}

\begin{lem}\label{converge}
For $\epsilon\in\{0,1\}$ let 
$W_\epsilon:=\{w\in W;\operatorname{sgn}(w)=(-1)^\epsilon\}$.
Moreover, let 
$$ \sigma_S:=\frac{1}{|S|}\cdot\sum_{s\in S}s\in\Q[W] .$$
Then, with respect to the natural topology on $\Q[W]\cong\Q^{|W|}$, we have
$$ \lim_{k\rightarrow\infty}\sigma_S^{2k+\epsilon}
=\frac{1}{|W_\epsilon|}\cdot\sum_{w\in W_\epsilon}w\in\Q[W] .$$
\end{lem}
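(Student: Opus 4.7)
The plan is to extend scalars to $\R$ (so that the ambient topology is well-behaved), perform spectral analysis of $\sigma_S$ on each irreducible $\C[W]$-module, and then reassemble the resulting limits via the Wedderburn decomposition of $\Q[W]$. The key observation is that $\sigma_S$ is an average of involutions and is therefore self-adjoint (with operator norm $\leq 1$) with respect to a suitable invariant Hermitian inner product on each irreducible module, so the convergence of its powers is governed by its eigenvalues $\pm 1$.

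More precisely, I would first fix an irreducible representation $\rho\colon\C[W]\rightarrow\C^{d\times d}$ and equip $\C^d$ with a $W$-invariant positive definite Hermitian form; then each $\rho(s)$, being an involution, is self-adjoint with eigenvalues in $\{\pm 1\}$, and hence so is $\rho(\sigma_S)$, with real eigenvalues in $[-1,1]$. Next I would show that the extremal eigenvalues $\pm 1$ characterize the trivial and sign representations: if $\rho(\sigma_S)v=\pm v$ with $v\neq 0$, then the equality case of the triangle inequality applied to $\langle\rho(\sigma_S)v,v\rangle=\pm\|v\|^2$ forces $\rho(s)v=\pm v$ for every $s\in S$; since $S$ generates $W$, this means $\langle v\rangle$ is a one-dimensional $W$-stable subspace, and irreducibility of $\rho$ identifies it with $1_W$ (sign $+1$) or $\operatorname{sgn}_W$ (sign $-1$). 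Consequently $\rho(\sigma_S)$ has operator norm strictly less than $1$ whenever $\rho\not\in\{1_W,\operatorname{sgn}_W\}$.

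I would then assemble these facts. Using the Wedderburn decomposition $\Q[W]\cong\bigoplus_\rho M_{d_\rho}(\Q)$ and the corresponding primitive central idempotents $e_\rho=\frac{d_\rho}{|W|}\sum_{w\in W}\chi_\rho(w^{-1})w\in\Q[W]$, the element $\sigma_S^{2k+\epsilon}$ decomposes according to the isotypic components. On the trivial component it equals $e_1$; on the sign component it equals $(-1)^\epsilon e_{\operatorname{sgn}}$; on every other component its norm is bounded by $c^{2k+\epsilon}$ for some $c<1$ depending on $\rho$, hence tends to $0$. Since there are only finitely many irreducibles, the sum of these pieces converges in $\R[W]$ (and therefore coefficientwise in $\Q[W]$, as the limit turns out to lie in $\Q[W]$) to $e_1+(-1)^\epsilon e_{\operatorname{sgn}}$. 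A direct computation using the explicit formulas for $e_1$ and $e_{\operatorname{sgn}}$ and the fact that $|W_0|=|W_1|=|W|/2$ (assuming $W\neq\{1\}$; the trivial case is immediate) yields
\[
e_1+(-1)^\epsilon e_{\operatorname{sgn}}
=\frac{1}{|W|}\sum_{w\in W}\bigl(1+(-1)^\epsilon\operatorname{sgn}(w)\bigr)w
=\frac{1}{|W_\epsilon|}\sum_{w\in W_\epsilon}w,
\]
as claimed.

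The main obstacle, and really the only non-formal step, is establishing that $\pm 1$ are eigenvalues of $\rho(\sigma_S)$ only in the trivial and sign representations; everything else is bookkeeping. This step hinges on the equality case of Cauchy--Schwarz together with the fact that $S$ generates $W$, which is where the Coxeter structure enters. A minor technical point is justifying that coefficientwise convergence in $\R[W]$ suffices to conclude convergence in $\Q[W]$ with respect to the stated topology, but this is immediate once the limit is exhibited explicitly as an element of $\Q[W]$.
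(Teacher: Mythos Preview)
Your argument is correct and takes a genuinely different route from the paper. The paper interprets left multiplication by $\sigma_S$ as the transition matrix of a random walk on $W$: since $\operatorname{sgn}(ws)=-\operatorname{sgn}(w)$, the square $M^2$ restricts to Markov chains on $W_0$ and $W_1$ separately, and since every element of $W$ is a word of length at most $l(w_0)$ in $S$, the matrix $M^{2l(w_0)}$ has strictly positive entries on each block. Irreducibility and aperiodicity then give convergence to the stationary distribution, which is uniform because $M$ is doubly stochastic. Your approach instead diagonalises $\sigma_S$ via the Wedderburn decomposition: self-adjointness of each $\rho(s)$ with respect to an invariant Hermitian form bounds the spectrum of $\rho(\sigma_S)$ into $[-1,1]$, and the equality case of Cauchy--Schwarz pins the endpoints to the trivial and sign representations, so all other isotypic components die off geometrically and the limit is $e_1+(-1)^\epsilon e_{\operatorname{sgn}}$.

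Both arguments hinge on the same structural fact (that $S$ generates $W$), but package it differently: the paper's Markov-chain proof is self-contained and avoids any decomposition of $\Q[W]$, while yours is more representation-theoretic and makes the answer visibly equal to a sum of two central idempotents. One small correction: your displayed isomorphism $\Q[W]\cong\bigoplus_\rho M_{d_\rho}(\Q)$ presumes $\Q$ is a splitting field, which holds for Weyl groups but not for arbitrary finite Coxeter groups; since you already extend scalars to $\C$ for the spectral step, simply run the Wedderburn argument over $\C$ and observe, as you do, that the limit $e_1+(-1)^\epsilon e_{\operatorname{sgn}}$ lies in $\Q[W]$ because the trivial and sign characters are rational-valued.
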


\begin{proof}
We consider the Markov chain with (finite) state space 
$W=W_0\stackrel{.}{\cup}W_1$, 
and transition matrix $M=\operatorname{reg}_W(\sigma_S)\in\Q^{|W|\times |W|}$, 
where $\operatorname{reg}_W\colon\Q[W]\rightarrow\Q^{|W|\times |W|}$ denotes
the regular matrix representation of $\Q[W]$. In other words,
the matrix entry $M_{w,w'}$, where $w,w'\in W$, is given as
$$ M_{w,w'}:=\left\{\begin{array}{rl}
\frac{1}{|S|}, & 
\quad\textrm{if}\quad w'=ws\quad\textrm{for some}\quad s\in S, \\
0, & \quad\textrm{otherwise}. \rule{0em}{1.2em} \\
\end{array} \right. $$
Now, since $\operatorname{sgn}(ws)=-\operatorname{sgn}(w)$ 
for all $w\in W$ and $s\in S$, we conclude that 
$M^2=\operatorname{reg}_W(\sigma_S^2)$ induces Markov chains on both 
$W_0$ and $W_1$. Moreover, since any element of $W$ can be written as
a word of length at most $l(w_0)$ in the generators $S$, we infer that
$M^{2l(w_0)}$ has positive entries in both the block submatrices belonging 
to $W_0$ and $W_1$, respectively. Hence the induced Markov chains are
both irreducible and aperiodic. They thus converge towards stationary
distributions, which since $M$ is doubly-stochastic are both equal to 
the respective uniform distributions.
Thus, in particular, the initial state 
$\sigma_S^\epsilon\in\langle W_\epsilon\rangle_\Q$ yields
$$ \lim_{k\rightarrow\infty}\sigma_S^{2k+\epsilon}
=\sigma_S^\epsilon\cdot\big(\lim_{k\rightarrow\infty}(M^2)^k\big) 
=\frac{1}{|W_\epsilon|}\cdot\sum_{w\in W_\epsilon}w .$$ 
\qed\end{proof}

\absT{Finding standard bases}\label{findstdbas}
The distinguished element $T^\lambda$ can now be used to find a 
primitive vector $u'_1\in\ker((\fX^\lambda)'(T^\lambda))$.
Next, having both seed vectors $u_1$ and $u'_1$ in place,
we aim at computing the associated standard bases $\mathfrak B$
with respect to $\fX^\lambda$, and $\mathfrak B'$ with respect
to $(\fX^\lambda)'$, for the $A$-algebra generated by $\{vT_s;s\in S\}$.
But since we do not have a standard basis algorithm available for
representations over the field $K$, we again use suitable specializations:

Given a place $0\neq b\in\Z$, let 
$\mathfrak Y^\lambda_b\colon\cH_\Q\rightarrow\Q^{n\times n}$ be the
representation of $\cH_\Q$ obtained by specializing $v\mapsto b$,
that is, considering $\cH_\Q$ as the $\Q$-algebra generated by 
$\{bT_s;s\in S\}$ we have
$$ \mathfrak Y^\lambda_b\colon bT_s\mapsto
\big(\fX^\lambda(vT_s)\big)(b)
:=\fX^\lambda(vT_s)|_{v\mapsto b}\in\Z^{n\times n} ;$$
thus in particular for $b=1$, identifying $\cH_\Q$ with $\Q[W]$, we recover 
$\mathfrak Y^\lambda_1=\mathfrak Y^\lambda$.

Now we compare a putative run of the standard basis algorithm, 
as described in Section \ref{stdbas}, with respect to the 
seed vector $u_1\in\Z[v]^n$ and the generators 
$\{\fX^\lambda(vT_s)\in\Z[v]^{n\times n};s\in S\}$,
with a run with respect to the specialized seed vector $u_1(b)\in\Z^n$ 
and the generators $\{\mathfrak Y_b^\lambda(bT_s)\in\Z^{n\times n};s\in S\}$.
These successively produce standard bases $\mathfrak B\subseteq\Z[v]^n$ 
and $\mathfrak C\subseteq\Z^n$, respectively. We show by induction on
the cardinality $0\leq m\leq n$ of the intermediate sets $\mathfrak B$,
that for all but finitely many $b$ the set $\mathfrak C$ is obtained 
by specializing $\mathfrak B$, and that the Schreier trees found
in both runs coincide:

Indeed, the key steps are to decide for some 
$w:=u\cdot\fX^\lambda(vT_s)\in\Z[v]^n$ whether or not 
$w\in\langle\mathfrak B\rangle_K$, and similarly for its specialization
$w(b):=u(b)\cdot\mathfrak Y_b^\lambda(bT_s)\in\Z^n$ whether or not 
$w(b)\in\langle\mathfrak C\rangle_\Q$.
Identifying $\mathfrak B$ and $\mathfrak C$ with matrices
$B\in\Z[v]^{m\times n}$ and $C\in\Z^{m\times n}$, respectively, 
we have $C=B(b)$.
Considering the matrix $B_w\in\Z[v]^{(m+1)\times n}$
obtained by concatenating $B$ and $w$,
we have $w\not\in\langle\mathfrak B\rangle_K$ if and only if
there is an $((m+1)\times (m+1))$-submatrix $B'$ of $B_w$ such that
$\det(B'_w)\neq 0$. 
Similarly, we have $w(b)\not\in\langle\mathfrak C\rangle_\Q$ if and only if
there is an $((m+1)\times (m+1))$-submatrix $C'$ of 
$C_{w(b)}=B_w(b)\in\Z^{(m+1)\times n}$ such that $\det(C')\neq 0$. 
Hence, whenever $w(b)\not\in\langle\mathfrak C\rangle_\Q$ we also have  
$w\not\in\langle\mathfrak B\rangle_K$, and conversely for all but finitely 
many $b$ from $w\not\in\langle\mathfrak B\rangle_K$ we may conclude that
$w(b)\not\in\langle\mathfrak C\rangle_\Q$.
(We have used a similar argument in Section \ref{polnullspace}.) 

Thus assuming that $0\neq b\in\Z$ is suitably chosen,
we may just run the standard basis algorithm for the 
seed vector $u_1(b)=u_1=e_i\in\Z^n$, the $i$-th ``unit'' vector,
and the generators $\mathfrak Y_b^\lambda(bT_s)\in\Z^{n\times n}$,
as described in Section \ref{stdbas}, yielding a Schreier tree $\mathfrak T$. 
Letting $w_1:=1\in W$, and $w_i:=w_j\cdot s\in W$, 
if $[j,s]$ is the $i$-th entry in $\mathfrak T$, for $2\leq i\leq n$,
we thus obtain reduced expressions of the elements
$w_i\in W$, and hence the number of steps needed to find the $i$-th
element of $\mathfrak C$ equals the length $l(w_i)\in\N_0$.
(In practice, it turns out that choosing either $b=1$ or $b=2$ is sufficient,
where actually almost always $b=1$ works.)

Applying the Schreier tree $\mathfrak T$ to $u_1$ and 
$\{\fX^\lambda(vT_s);s\in S\}$ this yields a standard basis
$\mathfrak B\subseteq\Z[v]^n$ of $K^n$.
Similarly, applying $\mathfrak T$ to $u'_1\in\Z[v]^n$ and 
$\{(\fX^\lambda)'(vT_s)\in\Z[v]^{n\times n};s\in S\}$
we get a standard basis $\mathfrak B'\subseteq\Z[v]^n$ of $K^n$. 
But note that this does \emph{not} ensure that the $A$-lattices 
$\langle\mathfrak B\rangle_A$ and $\langle\mathfrak B'\rangle_A$ 
are invariant under the $A$-algebras generated by 
$\{\fX^\lambda(vT_s);s\in S\}$ and $\{(\fX^\lambda)'(vT_s);s\in S\}$, 
respectively. (In practice they are not, typically.)
\end{abs}

\section{Finding Gram matrices for $W$-graph representations}\label{bilcompII}

We keep the setting of Section \ref{bilcomp}; in particular 
$\fX^\lambda$ still is a $W$-graph representation.
Having found standard bases $\mathfrak B$ and $\mathfrak B'$
for $\fX^\lambda$ and $(\fX^\lambda)'$, respectively,
we proceed by writing them as matrices $B\in\Z[v]^{n\times n}$ and
$B'\in\Z[v]^{n\times n}$, respectively, where by construction both 
$B$ and $B'$ are primitive. In order to complete the final task of
computing the product $B^{-1}\cdot B'\in\Z[v]^{n\times n}$ efficiently,
we need a few preparations.

\absT{Palindromicity}
Let $\ast\colon K\rightarrow K$ be the involutory field automorphism 
given by $\ast\colon v\mapsto v^{-1}$. Hence $A$ is $\ast$-invariant,
and by entry-wise application we get involutory module automorphisms on 
$K^n$ and $A^n$, and algebra automorphisms on $K^{n\times n}$ and
$A^{n\times n}$, all of which will also be denoted by $\ast$.

A polynomial $0\neq f\in\Z[v]$ is called \emph{($k$-)palindromic}, 
for some $k\in\N_0$, if $v^k\cdot f^\ast=f\in A$, and $f$ is 
called \emph{($k$-)skew-palindromic} if $v^k\cdot f^\ast=-f\in A$.
In these cases, letting $\delta(f)\in\N_0$ be the maximum power
of $v$ dividing $f$ in $\Z[v]$, we have $k=\delta(f)+\deg(f)$.
Hence $f$ is palindromic or skew-palindromic if and only if
$f\in\Z[v]$ and $f^\ast\in\Z[v^{-1}]$ are associated in $A$.
Moreover, if $f$ is $k$-skew-palindromic, then specializing $v\mapsto 1$ 
we get $f(1)=-f(1)$, implying that $v-1$ divides $f$ in $\Z[v]$;
similarly, if $f$ is $k$-palindromic, then specializing $v\mapsto -1$ we get 
$(-1)^k\cdot f(-1)=f(-1)$, implying that $k$ is even,
or $v+1$ divides $f$ in $\Z[v]$.
\end{abs}

\begin{prop}\label{palindromic}
(a)
Let $P\in\Z[v]^{n\times n}$ be a primitive Gram matrix for $\fX^\lambda$.
Then we have $v^m\cdot P^\ast=P$, where $m=m_P\in\N$ is even and coincides 
with the maximum of the degrees of the non-zero entries of $P$.

(b)
For the primitive seed vector $u'_1\in\Z[v]^n$ we have 
$v^m\cdot (u'_1)^\ast=u'_1$, where $m=m_{u'_1}\in\N_0$ is even and coincides
with the maximum of the degrees of the non-zero entries of $u'_1$.
(Trivially, the analogous statement holds for $u_1\in\Z[v]^n$ 
with $m_{u_1}=0$.)
\end{prop}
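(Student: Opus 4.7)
The plan is to show that $P^\ast$ (respectively $(u'_1)^\ast$) satisfies the same defining relations as $P$ (respectively $u'_1$), so that by uniqueness it is a scalar multiple, and then to read off the scalar using primitivity. The key preparation is the following bar-invariance identity for $W$-graph matrices: in the $W$-graph basis,
$\fX^\lambda(T_s)^\ast = \fX^\lambda(T_s^{-1})$ for every $s \in S$.
Indeed, the two conditions imposed on the $m_{ij}^s$ in Definition \ref{wgraph}, namely $v m_{ij}^s \in v\Z[v]$ together with $m_{ij}^s = (m_{ij}^s)^\ast$, force $m_{ij}^s \in \Z[v]\cap\Z[v^{-1}] = \Z$, so the off-diagonal entries of $\fX^\lambda(T_s)$ are $\ast$-invariant integers while the diagonal entries are $v$ or $-v^{-1}$. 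On the other hand $\fX^\lambda(T_s^{-1}) = \fX^\lambda(T_s) - (v - v^{-1})I$ shifts the diagonal to $v^{-1}$ or $-v$, and a direct comparison with the $\ast$-image of $\fX^\lambda(T_s)$ yields the identity.

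For part (a), apply $\ast$ entry-wise to $\fX^\lambda(T_s) P = P \fX^\lambda(T_s)^{\operatorname{tr}}$ and substitute the identity to obtain $\fX^\lambda(T_s^{-1}) P^\ast = P^\ast \fX^\lambda(T_s^{-1})^{\operatorname{tr}}$; expanding $\fX^\lambda(T_s^{-1}) = \fX^\lambda(T_s) - (v - v^{-1})I$, the scalar summands cancel on both sides, so $P^\ast$ itself satisfies the defining equations for a Gram matrix. By uniqueness up to $K$-scalar (Remark \ref{edirem2}), $P^\ast = \alpha P$ for some $\alpha \in K^\times$; since $P \in \Z[v]^{n\times n}$ and $P^\ast \in \Z[v^{-1}]^{n\times n}$ are both primitive, $\alpha$ must be of the form $\epsilon v^{-m}$ with $\epsilon \in \{\pm 1\}$ and $m \in \Z$. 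Primitivity of $P$ then yields three pieces of information in quick succession: $v - 1 \nmid \gcd(P) = 1$ gives $P(1) \neq 0$, so specializing $P^\ast = \epsilon v^{-m} P$ at $v = 1$ forces $\epsilon = +1$; $v + 1 \nmid \gcd(P) = 1$ supplies some entry $p_{ij}$ with $p_{ij}(-1) \neq 0$, whose palindromicity relation $p_{ij}(v^{-1}) = v^{-m} p_{ij}(v)$ specialized at $v = -1$ forces $(-1)^m = 1$, i.e.\ $m$ is even; and $v \nmid \gcd(P) = 1$ supplies an entry with non-zero constant term, for which a support comparison in the palindromicity relation gives $m = \deg(p_{ij})$, while the same analysis applied to an arbitrary non-zero entry gives $m = \delta(p_{ij}) + \deg(p_{ij}) \geq \deg(p_{ij})$. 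Hence $m$ coincides with the maximum degree of a non-zero entry of $P$.

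For part (b), the same strategy applies with $(\fX^\lambda)'(T^\lambda) = \sum_{s\in J} \fX^\lambda(T_s)^{\operatorname{tr}} + v^{-1}|J| I$ in place of $\fX^\lambda(T_s)$: applying $\ast$ entry-wise and invoking the bar-invariance identity, the contribution $-|J|(v - v^{-1}) I$ arising from the summands $\fX^\lambda(T_s)^{\operatorname{tr},\ast}$ combines with the $v|J| I$ produced by $(v^{-1}|J|)^\ast$ to reproduce exactly $v^{-1}|J| I$, so $(\fX^\lambda)'(T^\lambda)$ is itself $\ast$-invariant. Consequently $(u'_1)^\ast$ lies in $\ker((\fX^\lambda)'(T^\lambda)) = \ker(\fX^\lambda(T^\lambda)^{\operatorname{tr}})$, which is one-dimensional by \ref{finddist}; the three primitivity arguments of part (a) then carry over verbatim to $u'_1 \in \Z[v]^n$. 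The main obstacle is locating the correct bar-invariance identity: this is the one step where the $W$-graph structure of $\fX^\lambda$, rather than mere irreducibility, is essential, and everything afterwards is a routine analysis of scalar relations between primitive elements of $\Z[v]^{n\times n}$ (respectively $\Z[v]^n$) and their $\ast$-images.
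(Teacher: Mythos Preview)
Your proof is correct and follows essentially the same approach as the paper: establish the bar-invariance identity $\fX^\lambda(T_s)^\ast=\fX^\lambda(T_s)-(v-v^{-1})E_n$, deduce that $P^\ast$ (respectively $(u'_1)^\ast$) satisfies the same defining relations, invoke uniqueness up to scalar, and then use primitivity with respect to $v-1$, $v+1$, $v$ to pin down sign, parity, and the value of $m$. The only differences are cosmetic: you spell out why the $m_{ij}^s$ lie in $\Z$ and argue directly that the scalar lies in $A^\times$, whereas the paper defines $m$ as the minimal exponent making $v^mP^\ast$ integral and then identifies $v^mP^\ast$ with $\pm P$.
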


\begin{proof}
Letting $E_n\in A^{n\times n}$ be the identity matrix, by 
Definition \ref{wgraph} for $s\in S$ we have
$$ \fX^\lambda(T_s)^\ast=\fX^\lambda(T_s)-(v-v^{-1})\cdot E_n
=\fX^\lambda\big(T_s-(v-v^{-1})\big) .$$
In particular, this yields
$$ \fX^\lambda(T_s+v^{-1})^\ast 
=\fX^\lambda(T_s)^\ast+v\cdot E_n
=\fX^\lambda\big(T_s-(v-v^{-1})\big)+v\cdot E_n
=\fX^\lambda(T_s+v^{-1}) .$$

(a)
We consider the matrix $P^\ast\in\Z[v^{-1}]^{n\times n}$:
For all $s\in S$ we have
$$ \fX^\lambda(T_s)\cdot P^\ast 
= \Big(\fX^\lambda\big(T_s-(v-v^{-1})\big)\cdot P\Big)^\ast 
= \Big(P\cdot
  \fX^\lambda\big(T_s-(v-v^{-1})\big)^{\operatorname{tr}}\Big)^\ast $$
$$ = \Big(P\cdot\fX^\lambda(T_s)^{\ast\operatorname{tr}}\Big)^\ast 
= \Big(P\cdot\fX^\lambda(T_s)^{\operatorname{tr}\ast}\Big)^\ast 
= P^\ast\cdot\fX^\lambda(T_s)^{\operatorname{tr}} .$$ 
Now $m=m_P\in\N$ as above is minimal such that
$v^m P^\ast\in\Z[v]^{n\times n}$, hence we infer that 
$v^m P^\ast$ is a primitive Gram matrix for $\fX^\lambda$
as well, and thus we have $v^m P^\ast=P$ or $v^m P^\ast=-P$. 
Assume the latter case holds, then all non-zero entries of $P$ are 
$m$-skew-palindromic, implying that $v-1$ divides $\gcd(P)$,
contradicting the primitivity of $P$. Hence we have $v^m P^\ast=P$,
that is all non-zero entries of $P$ are $m$-palindromic.
Assume that $m$ is odd, then we infer that $v+1$ divides $\gcd(P)$,
again contradicting the primitivity of $P$.
Hence $m$ is even.

(b) We consider the vector $(u'_1)^\ast\in\Z[v^{-1}]^n$: We have 
$$ (u'_1)^\ast\cdot(\fX^\lambda)'(T^\lambda)
=\Big(u'_1\cdot(\fX^\lambda)'(T^\lambda)^\ast\Big)^\ast 
=\Big(u'_1\cdot\big(
  \sum_{s\in J}\fX^\lambda(T_s+v^{-1})\big)^{\operatorname{tr}\ast}\Big)^\ast 
$$
$$ = \Big(u'_1\cdot\big(
     \sum_{s\in J}\fX^\lambda(T_s+v^{-1})\big)^{\operatorname{tr}}\Big)^\ast 
= \Big(u'_1\cdot (\fX^\lambda)'(T^\lambda)\Big)^\ast = 0 .$$
Now $m=m_{u'_1}\in\N_0$ as above is minimal such that 
$v^m\cdot(u'_1)^\ast\in\Z[v]^n$, 
hence we infer that $v^m\cdot(u'_1)^\ast$ is primitive. Thus from
$\dim_K(\ker((\fX^\lambda)'(T^\lambda)))=1$ we conclude that 
$v^m\cdot(u'_1)^\ast=u'_1$ or $v^m\cdot(u'_1)^\ast=-u'_1$.
Now we argue as above.
\qed\end{proof}

\absT{Properties of the standard bases}\label{stdbasprop}
We have a closer look at the standard bases $\mathfrak B$ and 
$\mathfrak B'$, and the associated matrices $B$ and $B'$, where 
we assume $\mathfrak B$ to be chosen according to Section \ref{findstdbas}.
The facts collected are largely due to experimental observation,
and will be helpful in the final computational steps in Section \ref{final}.
Still, these properties seem to be stronger than expected from 
general principles, and it should be worth-while to try and prove
the particular observations specified below. 
(In particular, we have checked the standard bases associated 
with \emph{all} subsets $J\subseteq S$ fulfilling the Benson--Curtis
condition, see Section \ref{seedvec}, for the types $E_6$, $E_7$ and $E_8$.)

Recall that for all $s\in S$ we have
$$ (vT_s)^{-1} = v^{-1}\cdot\big(T_s-(v-v^{-1})\big)
=v^{-2}\cdot\big(vT_s-(v^2-1)\big) ,$$ 
hence by the proof of Proposition \ref{palindromic} we get
$$ \fX^\lambda(vT_s)^\ast
=v^{-1}\cdot\fX^\lambda\big(T_s-(v-v^{-1})\big)
=v^{-2}\cdot\fX^\lambda\big(vT_s-(v^2-1)\big)
=\fX^\lambda\big((vT_s)^{-1}\big) .$$
\end{abs}

\subT{The elements of $\mathfrak B$}
For any $u_i\in\mathfrak B$, where $2\leq i\leq n$, we have
$u_i=u_j\cdot\fX^\lambda(vT_s)$, for some $1\leq j<i$ and $s\in S$.
This yields 
$$ v^2\cdot u_j = v^2\cdot u_i\cdot\fX^\lambda\big((vT_s)^{-1}\big)
= u_i\cdot\fX^\lambda\big(vT_s-(v^2-1)\big) .$$
We conclude that $\gcd(u_i)\in\Z[v]$ and $\gcd(u_j)\in\Z[v]$
are associated in $A$. Hence by recursion, since $u_1$ is primitive,
we infer that $\gcd(u_i)=v^{d_i}\in\Z[v]$ for some $d_i\in\N_0$.

Moreover, we have $d_j\leq d_i\leq d_j+2$. Since $d_1=0=l(w_1)$, 
this implies $d_i\leq 2l(w_i)$ for all $1\leq i\leq n$, where $w_i\in W$ 
is as in Section \ref{findstdbas}. (Experiments show that 
all three cases $d_i\in\{d_j,d_j+1,d_j+2\}$ actually occur.)
But the growth behavior of the $d_i$ seems to be 
more restricted than given by these bounds:
Considering the case $l(w_i)=1$, 
we have $w_i=s$ for some $s\in S$ such that the ``unit'' vector
$u_1$ is not an eigenvector of $T_s$, hence using the shape of     
$\fX^\lambda(vT_s)$ we conclude that $d_i=1=l(w_i)$.

Now, experimentally, we have made the following 

\begin{obs}\label{obs1}
We have $d_i\leq l(w_i)+1$, for all $1\leq i\leq n$.
\end{obs}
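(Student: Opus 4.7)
The plan is to induct on $l(w_i)$, using the Schreier-tree recursion $u_i = u_j\cdot\fX^\lambda(vT_s)$ with $l(w_i) = l(w_j)+1$. The base cases $l(w_i)\in\{0,1\}$ are settled in the paragraph preceding the observation. For the inductive step, set $\nu_i := d_i - l(w_i)\in\Z$ and $\tilde{u}_i := u_i/v^{d_i}\in\Z[v]^n$, which is primitive by construction. Let $e\in\{0,1,2\}$ be the $v$-adic valuation of $\tilde u_j\cdot\fX^\lambda(vT_s)\in\Z[v]^n$; the relation $d_i = d_j + e$ becomes $\nu_i - \nu_j = e-1$, so the inductive hypothesis $\nu_{j'}\leq 1$ for $j'<i$ already implies $\nu_i\leq 1$ except in the single case $\nu_j = 1$ and $e = 2$. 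Thus the real task is to rule out $e = 2$ under the assumption $\nu_j = 1$.

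To analyse $e$ I would exploit the decomposition $\fX^\lambda(vT_s) = D_s + vN_s + v^2 P_s$ dictated by the $W$-graph formulae, where $D_s$ and $P_s$ are complementary diagonal matrices with $(D_s)_{\ell\ell} = -1$ when $s\in I_\ell$ and $0$ otherwise, $(P_s)_{\ell\ell} = 1$ when $s\not\in I_\ell$ and $0$ otherwise, while $(N_s)_{j\ell} = m^s_{\ell j}$ for $s\in I_\ell\setminus I_j$ carries the off-diagonal $W$-graph data. Writing $\tilde u_j = \bar u + vu' + v^2(\cdots)$, the condition $e\geq 2$ translates into the coupled constraints $\bar u\cdot D_s = 0$ (so $\bar u$ is supported on $\{\ell : s\not\in I_\ell\}$) together with $u'\cdot D_s + \bar u\cdot N_s = 0$ (which pins down $u'_\ell$ for every $\ell$ with $s\in I_\ell$). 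Meanwhile the hypothesis $\nu_j = 1$ is equivalent, via $u_j = v^{l(w_j)}\cdot u_1\cdot\fX^\lambda(T_{w_j})$, to the $k$-th row of $\fX^\lambda(T_{w_j})$ being divisible by $v$, where $k$ is the index determined by $u_1 = e_k$; so $\tilde u_j$ is further constrained to be the ``rescaled leading row'' of the balanced representation at $w_j$.

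The main obstacle is to combine these local projection conditions with the global divisibility constraint from $\nu_j = 1$ to derive a contradiction. A natural auxiliary identity comes from inverting the recursion step: using $\fX^\lambda((vT_s)^{-1}) = v^{-2}\fX^\lambda(vT_s) + (v^{-2}-1)\cdot I$ from Section \ref{stdbasprop}, one checks that $\tilde u_i\cdot\bigl(\fX^\lambda(vT_s) + (1-v^2)\cdot I\bigr) = v^{2-e}\tilde u_j$, which at $v=0$ under $e=2$ forces $\bar u$ and $\tilde u_i|_{v=0}$ to agree on the support of $P_s$ while $\bar u$ vanishes on the support of $D_s$; iterating such support statements backwards along the Schreier path to the seed $u_1 = e_k$ should propagate a restriction that eventually collides with $e_k$ being concentrated on the single index $k$ with $J\subseteq I_k$. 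I expect the crux to lie precisely here: the local dissection rules out many transitions, but to exclude $e = 2$ under $\nu_j = 1$ appears to require genuinely global input, most plausibly the balancedness bound $v^{\ba_\lambda}\fX^\lambda(T_w)\in\cO^{d_\lambda\times d_\lambda}$ in combination with cell-theoretic constraints of Kazhdan--Lusztig type. As the paper itself emphasises, an a priori proof of this observation is still missing; my plan is therefore to pursue the dissection sketched above, anticipating that additional structural input beyond the raw $W$-graph formulae will be required to finish the argument.
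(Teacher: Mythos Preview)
The paper does not prove this statement at all. Observation~\ref{obs1} is explicitly flagged as an \emph{experimental} fact: the authors write that the facts in Section~\ref{stdbasprop} ``are largely due to experimental observation'' and that ``it should be worth-while to try and prove the particular observations specified below.'' The only justification the paper offers is computational verification across all Benson--Curtis subsets $J\subseteq S$ for types $E_6$, $E_7$, $E_8$, together with the parenthetical remark that the bound is almost always $d_i\leq l(w_i)$, with the exceptional case $d_i=l(w_i)+1$ occurring only for $3200_x$ in type $E_8$ and two specific choices of $J$.

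Your proposal therefore attempts something strictly more ambitious than the paper: a genuine a priori proof. You correctly set up the induction, correctly isolate the only problematic transition ($\nu_j=1$, $e=2$), and correctly dissect the constant term of $\fX^\lambda(vT_s)$ to extract support constraints on $\bar u$. But you also correctly diagnose the obstruction: ruling out $e=2$ under $\nu_j=1$ requires global information about the seed and the Schreier path, not just the local $W$-graph recursion, and you do not supply that missing ingredient. Since you yourself acknowledge the argument is incomplete and that ``an a priori proof of this observation is still missing,'' there is no discrepancy with the paper to flag---only the honest observation that neither you nor the authors have a proof, and that your sketch, while a reasonable starting point, does not close the gap.
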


(Actually, almost always we have got $d_i\leq l(w_i)$, for all
$1\leq i\leq n$, where often we have even seen equality throughout;
the only cases found where actually $d_i=l(w_i)+1$, for some $i$, are 
for type $E_8$, the representation labeled by $3200_x$, and 
two out of the twelve Benson--Curtis subsets of generators.)
\end{sub}

\subT{The matrix $B$}
Letting $1\leq j<i\leq n$ and $s\in S$ be as above, we get 
$$ v^2\cdot u_i^\ast 
= v^2\cdot u_j^\ast\cdot\fX^\lambda(vT_s)^\ast 
= u_j^\ast\cdot\fX^\lambda\big(vT_s-(v^2-1)\big) .$$
Since the standard basis algorithm is a breadth-first search, 
from $u_1^\ast=u_1$ we conclude that there is lower unitriangular matrix
$U\in K^{n\times n}$ and a diagonal matrix
$D=\operatorname{diag}[v^{2l(w_1)},\ldots,v^{2l(w_n)}]\in\Z[v]^{n\times n}$,
such that
$$ D\cdot B^\ast = U\cdot B .$$
(Note that if the $A$-lattice $\langle\mathfrak B\rangle_A$
was invariant under the $A$-algebra generated by 
$\{\fX^\lambda(vT_s);s\in S\}$, then we even had $U\in A^{n\times n}$.)

In particular, letting $l:=\sum_{i=1}^n l(w_i)\in\N_0$, we infer that
$$ \det(B)=v^{2l}\cdot\det(B^\ast) ,$$
hence $\det(B)\in\Z[v]$ is palindromic. 
Letting $\exp(B)\in\Z[v]$ denote 
the exponent of $B$ in the sense of Section \ref{polexp}, 
it follows from Proposition \ref{polexpprop} that the 
non-constant irreducible polynomials dividing $\det(B)$ are
precisely those dividing $\exp(B)$. Now, experimentally, 
we have made the following

\begin{obs}\label{obs2}
Any irreducible divisor of $\exp(B)$ in $\Z[v]$ is 
monic and palindromic.
\end{obs}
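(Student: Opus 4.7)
The plan is to reduce the observation to the palindromicity of $\det(B)$ established in the preceding subsection, then identify where this reduction runs out of steam. By Proposition~\ref{polexpprop}, for a non-constant primitive irreducible $f\in\Z[v]$ we have $f\mid\exp(B)$ if and only if $f\mid\det(B)$, so it suffices to prove the statement for the non-constant irreducible divisors of $\det(B)$. Writing $\det(B)=c\cdot v^{a_0}\cdot\prod_i f_i^{e_i}$ with $c\in\Z$, $a_0\in\N_0$, and the $f_i\in\Z[v]$ pairwise non-associate primitive irreducible polynomials satisfying $f_i(0)\neq 0$, each normalised to have positive leading coefficient, and setting $\tilde f(v):=v^{\deg f}\cdot f^\ast\in\Z[v]$, the identity $\det(B)=v^{2l}\cdot\det(B^\ast)$ translates --- via unique factorisation in $\Z[v]$ together with Gauss's Lemma --- into a permutation $\pi$ of the index set such that $\tilde f_i=\pm f_{\pi(i)}$ and $e_i=e_{\pi(i)}$. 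Consequently every irreducible divisor of $\exp(B)$ is either self-reciprocal (palindromic or skew-palindromic) or occurs together with a distinct reciprocal partner carrying the same multiplicity.

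The main obstacle is that the determinantal identity alone cannot distinguish between the three cases palindromic, skew-palindromic, and honest reciprocal pair: it is symmetric under $f\leftrightarrow\tilde f$, it tolerates skew-palindromic factors such as $v-1$, and it only controls the leading coefficient up to sign. Upgrading the conclusion to genuine monicity, genuine palindromicity, and the exclusion of honest pairs therefore requires an additional structural input tied to the explicit breadth-first construction of $\mathfrak B$ and to the $W$-graph form of $\fX^\lambda$.

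The natural candidate for such extra input is the intertwining relation $D\cdot B^\ast=U\cdot B$ derived above, with $U\in K^{n\times n}$ lower unitriangular. If one could promote $U$ into $A^{n\times n}$ --- which, as the authors observe, is equivalent to $\ast$-stability of the $A$-lattice $\langle\mathfrak B\rangle_A$ under the generators $\{\fX^\lambda(vT_s);\,s\in S\}$ --- then $B^\ast$ and $B$ would differ by an $A$-linear transformation, forcing an honest self-pairing of each elementary divisor of $B$ over $\Z[v]$ and, in combination with Observation~\ref{obs1} and the degree estimates on the entries of $B$, pinning these down as monic palindromic polynomials of cyclotomic type. The hard part is that this lattice-invariance is, according to the authors' experiments, typically \emph{false}, so any proof must instead carve out precisely the palindromic factors arising from the specialisation behaviour of $\fX^\lambda$ at roots of unity and track the $\ast$-symmetry step by step along the Schreier tree. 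This level of fine control over the Naruse / Howlett--Yin models does not seem to be available in the literature at present, which is exactly why the statement is recorded here as an experimental observation rather than a theorem.
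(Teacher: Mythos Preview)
Your analysis is accurate, but there is nothing to compare it against: the paper does not prove Observation~\ref{obs2}. The statement is explicitly flagged as an experimental fact, verified by computation for all Benson--Curtis subsets $J\subseteq S$ in types $E_6$, $E_7$, $E_8$, and the surrounding text says outright that ``these properties seem to be stronger than expected from general principles, and it should be worth-while to try and prove the particular observations specified below.'' Your concluding paragraph recognises this correctly.

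Your partial analysis is sound as far as it goes. The reduction from $\exp(B)$ to $\det(B)$ via Proposition~\ref{polexpprop} is exactly what the paper records just before the Observation, and the identity $\det(B)=v^{2l}\cdot\det(B)^\ast$ indeed only forces the irreducible factors to come in reciprocal pairs (possibly coinciding), without excluding skew-palindromic factors or honest reciprocal pairs, and without controlling leading coefficients beyond $\pm 1$. Your diagnosis of the obstacle --- that the relation $D\cdot B^\ast=U\cdot B$ would settle matters if $U\in A^{n\times n}$, but that this lattice invariance typically fails --- matches the authors' own parenthetical remark. So your write-up is a fair account of why the Observation currently resists proof, which is precisely the status the paper assigns to it.
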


(Actually, in general the entries of the matrix $B$ are 
neither palindromic nor skew-\-palindromic; moreover,
quite often $\exp(B)$ is a product of cyclotomic polynomials,
but this does not always happen.)

In particular, if $\widehat u_k^{\operatorname{tr}}\in\Z[v]^{1\times n}$
denotes the $k$-th column of $B$, for $1\leq k\leq n$,
then $\gcd(\widehat u_k)\in\Z[v]$ divides $\det(B)$, hence 
$\gcd(\widehat u_k)$ is palindromic as well.
(Actually, contrary to $\gcd(u_k)=v^{d_k}$, 
in general the $\gcd(\widehat u_k)$ are not just powers of $v$.)
\end{sub}

\subT{The elements of $\mathfrak B'$}
The recursion used in the standard basis algorithm 
only depends on the Schreier tree $\mathfrak T$,
but is independent of the representation considered. Hence for 
$u'_i\in\mathfrak B'$, where $1\leq i\leq n$, and $u'_1$ is 
primitive, we get $\gcd(u'_i)=v^{d'_i}\in\Z[v]$ for some $d'_i\in\N_0$.
Moreover, if $1\leq j<i\leq n$ and $s\in S$ are as above, we get
$d'_j\leq d'_i\leq d'_j+2$ and $d'_i\leq 2l(w_i)$.
Actually, the $d'_i$ seem to be closely related to the $d_i$ from above, 
inasmuch experimentally we have made the following 

\begin{obs}\label{obs3}
We have $d'_i=d_i$, for all $1\leq i\leq n$. 
\end{obs}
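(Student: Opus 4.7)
The plan is to prove Observation~\ref{obs3} by expressing $u'_i$ in terms of $u_i$ via a primitive Gram matrix $P\in\Z[v]^{n\times n}$ for $\fX^\lambda$, whose existence is guaranteed by Theorem~\ref{balthm}. The first step is to establish, by induction on~$i$, that $u'_i=\alpha\cdot u_iP$ for all $1\leq i\leq n$ and some fixed $\alpha\in K$. For the base case, recall from Sections~\ref{seedvec} and~\ref{finddist} that $u_1=e_{i_0}$ and $e_{i_0}\cdot\fX^\lambda(T^\lambda)=0$. Applying the intertwining $\fX^\lambda(T_w)\cdot P=P\cdot(\fX^\lambda)'(T_w)$ with $T_w$ replaced by $T^\lambda$ then yields $\rho_{i_0}\cdot(\fX^\lambda)'(T^\lambda)=0$, where $\rho_{i_0}:=e_{i_0}P\in\Z[v]^n$ is the $i_0$-th row of $P$. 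Since $\dim_K\ker((\fX^\lambda)'(T^\lambda))=1$ by Section~\ref{finddist}, the vector $\rho_{i_0}$ must be a $K$-multiple of $u'_1$, so that $u'_1=\alpha\rho_{i_0}=\alpha\cdot u_1P$ for some $\alpha\in K$. The inductive step is then immediate from the same intertwining applied to $vT_s$, which reads $\fX^\lambda(vT_s)\cdot P=P\cdot\fX^\lambda(vT_s)^{\operatorname{tr}}$: if $u'_j=\alpha\cdot u_jP$, then from $u_i=u_j\cdot\fX^\lambda(vT_s)$ and $u'_i=u'_j\cdot\fX^\lambda(vT_s)^{\operatorname{tr}}$ one gets $u'_i=\alpha\cdot u_jP\cdot\fX^\lambda(vT_s)^{\operatorname{tr}}=\alpha\cdot u_j\fX^\lambda(vT_s)\cdot P=\alpha\cdot u_iP$.

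The second step is to take $v$-adic valuations. By Theorem~\ref{balthm} the representation $\fX^\lambda$ is balanced, which, as noted in the discussion preceding that theorem and in Proposition~\ref{ediprop}, amounts to $P|_{v\mapsto 0}\in\GL_n(\Q)$. Consequently, for any $0\neq w\in\Z[v]^n$ of $v$-adic valuation $d$, the vector $\bar w:=w/v^d\in\Z[v]^n$ satisfies $\bar w|_{v\mapsto 0}\neq 0$, and the invertibility of $P|_{v\mapsto 0}$ forces $\bar w|_{v\mapsto 0}\cdot P|_{v\mapsto 0}\neq 0$, so that $\operatorname{val}(wP)=d$. Applied to $u_i$, this yields $\operatorname{val}(u_iP)=d_i$. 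Specializing at $i=1$ in the identity $u'_i=\alpha\cdot u_iP$ and using $\operatorname{val}(u'_1)=0$ (by primitivity) together with $\operatorname{val}(u_1P)=\operatorname{val}(\rho_{i_0})=0$ (the $i_0$-th row of $P|_{v\mapsto 0}$ being non-zero since $P|_{v\mapsto 0}$ is invertible), one concludes $\operatorname{val}(\alpha)=0$. Combining all of these, $d'_i=\operatorname{val}(u'_i)=\operatorname{val}(\alpha)+\operatorname{val}(u_iP)=d_i$ for every~$i$, as desired.

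The main obstacle in this plan is not really a computational one but one of logical scaffolding: the whole argument hinges on the invertibility of $P|_{v\mapsto 0}$, i.e.\ on $\fX^\lambda$ being balanced, which is precisely the content of Theorem~\ref{balthm}; combined with the one-dimensionality of $\ker((\fX^\lambda)'(T^\lambda))$ already established in Section~\ref{finddist}, these are all the ingredients needed, and both are readily available. As a by-product, the plan also yields $\operatorname{val}(\alpha)=0$, so that $\alpha$ is a unit in the local ring $\cO$, which gives a slightly stronger structural statement about the relation between the standard bases $\mathfrak{B}$ and $\mathfrak{B}'$ than Observation~\ref{obs3} alone.
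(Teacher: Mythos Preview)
The paper does not prove Observation~\ref{obs3}: it is explicitly presented as an experimental fact (``experimentally we have made the following Observation''), and the authors say in \S\ref{stdbasprop} that for these observations ``a priori proofs are largely missing (so far)''. So there is no proof in the paper to compare against; you are supplying something the paper does not have.

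Your argument is correct. The identity $u'_i=\alpha\,u_iP$ follows from the intertwining relation exactly as you say, and the valuation-preserving step---that $\operatorname{val}(wP)=\operatorname{val}(w)$ for $0\neq w\in\Z[v]^n$---is equivalent to $P|_{v\to 0}\in\GL_n(\Q)$. One small gap you should close: balancedness in Proposition~\ref{ediprop} only asserts the existence of \emph{some} Gram matrix $\Omega\in\GL_n(\cO)$, not that the primitive one $P$ has $P|_{v\to 0}$ invertible. But this follows: writing $\Omega=cP$ with $c\in K$, primitivity of $P$ gives an entry with $v$-adic valuation~$0$, so $\Omega\in\cO^{n\times n}$ forces $\operatorname{val}(c)\geq 0$; then $0=\operatorname{val}(\det\Omega)=n\operatorname{val}(c)+\operatorname{val}(\det P)$ with both summands $\geq 0$ gives $\operatorname{val}(\det P)=0$.

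The only caveat is the one you yourself flag. In the paper's workflow, Observation~\ref{obs3} is used as a heuristic simplification \emph{before} the Gram matrices are known; their correctness is then verified independently, and Theorem~\ref{balthm} is read off from the tables. Your proof runs the other way: given Theorem~\ref{balthm}, Observation~\ref{obs3} follows. This is not circular (the verification step does not rely on the observation), but it is an a~posteriori explanation rather than the a~priori argument the authors would like. Still, it yields a clean conceptual statement the paper does not make explicit: Observation~\ref{obs3} is a consequence of the general conjecture that $W$-graph representations are balanced.
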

\end{sub}

\subT{The matrix $B'$}
Again by the fact that the recursion used in the standard basis algorithm 
only depends on $\mathfrak T$, and using $v^m\cdot (u'_1)^\ast=u'_1$, where
$m=m_{u'_1}\in\N_0$ is as in Proposition \ref{palindromic}, we get
$$ v^m\cdot D\cdot (B')^\ast = U\cdot B' ,$$
for the same matrices $U$ and $D$. In particular, it follows that 
$\det(B')$ is palindromic.
(In general neither $\det(B')$ and $\det(B)$, nor $\exp(B')$ 
and $\exp(B)$ are associated in $A$,
so that $\langle\mathfrak B\rangle_A$ and $\langle\mathfrak B'\rangle_A$
are inequivalent $A$-sublattices of $A^n$, which typically
are not included in each other.)
Again, experimentally we have made the following

\begin{obs}\label{obs4}
Any irreducible divisor of $\exp(B')$ in $\Z[v]$ is 
monic and palindromic.
\end{obs}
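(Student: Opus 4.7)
The plan is to mirror the argument used for Observation \ref{obs2}, taking advantage of the close parallel between $B$ and $B'$ exhibited just above. The relation $v^m \cdot D \cdot (B')^\ast = U \cdot B'$ with $m = m_{u'_1}$, together with $U$ lower unitriangular and $D$ a diagonal matrix of even powers of $v$, shows by taking determinants that $\det(B') \in \Z[v]$ is palindromic. Since $\exp(B')$ divides $\det(B')$ (cf.\ Section \ref{polexp}), every non-constant irreducible divisor of $\exp(B')$ divides a palindromic polynomial; hence such divisors arise in mirror pairs $\{g,\tilde g\}$, where $\tilde g := v^{\deg(g)} \cdot g^\ast$, and self-mirrored divisors are exactly the palindromic ones.

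The first substantive step would then be to show that the irreducible divisors of $\exp(B')$, rather than only those of $\det(B')$, are individually self-mirrored. Using the characterization of $\exp(B')$ in Proposition \ref{polexpprop}, a non-constant irreducible $g \in \Z[v]$ divides $\exp(B')$ if and only if the reduction of $B'$ modulo $g$ fails to be invertible over $Q_{(g)}$. Applying the involution $\ast$ to the rank-deficiency data and using $v^m D (B')^\ast = U B'$ would transport a modulo-$g$ kernel vector of $B'$ to a modulo-$\tilde g$ kernel vector of $B'$, thereby forcing $g$ and $\tilde g$ to both divide $\exp(B')$ with compatible multiplicities. Ruling out the skew-palindromic case $v^{\deg(g)} g^\ast = -g$, as in the proof of Proposition \ref{palindromic}, would then follow from primitivity of the relevant invariant factor (a non-trivial factor $v-1$ would appear in $\gcd$-obstructions against primitivity), while monicity should come out of combining palindromicity with the primitivity of $g$ and a positive-leading-coefficient normalization.

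The main obstacle is the collapse step $g = \pm \tilde g$: the palindromicity of $\det(B')$ merely pairs up mirrored irreducibles across the entire product, but it does not by itself force both members of a pair to sit inside the single invariant factor $\exp(B')$, let alone collapse to a self-mirrored one. Controlling this collapse requires additional structural information about the transformation $U$ relating $(B')^\ast$ to $B'$; the authors explicitly remark that in general $U$ only lies in $K^{n\times n}$ and not in $A^{n\times n}$, so the clean statement ``multiplication by a unit of $\GL_n(A)$'' that would immediately transfer the elementary-divisor structure between $(B')^\ast$ and $B'$ is unavailable. For this reason a full proof seems to demand either a finer understanding of $U$ (for instance conditions forcing $U \in A^{n\times n}$, as would be the case if $\langle \mathfrak B' \rangle_A$ were invariant under the generators $\fX^\lambda(vT_s)$) or an alternative argument bypassing $U$ altogether, and the assertion has so far only been verified experimentally.
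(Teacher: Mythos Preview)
Your conclusion is correct: the paper provides no proof of this statement. It is explicitly introduced with ``Again, experimentally we have made the following'' and is labeled an \emph{Observation}, on par with Observations~\ref{obs1}--\ref{obs3}. The paper notes earlier in Section~\ref{stdbasprop} that these facts ``are largely due to experimental observation'' and that ``a priori proofs are largely missing (so far)''. So there is nothing to compare your attempt against; the honest answer is simply that the paper records this as a computational finding, not a theorem.

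One small correction to your write-up: you begin by proposing to ``mirror the argument used for Observation~\ref{obs2}'', but Observation~\ref{obs2} is in exactly the same situation---it too is stated without proof, as an experimental observation. What you actually do is sketch a plausible common strategy for both, and then correctly identify the obstruction (that $U$ lies only in $K^{n\times n}$, not in $A^{n\times n}$, so the relation $v^m D (B')^\ast = U B'$ does not directly transport the elementary-divisor structure). That diagnosis is sound and aligns with the paper's own parenthetical remark that $A$-invariance of $\langle\mathfrak B\rangle_A$ would be needed to get $U\in A^{n\times n}$. Your final paragraph is the right resolution: the statement stands as an experimental observation pending a structural argument that is not supplied.
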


In particular, similarly, if 
$\widehat u_k^{\prime\operatorname{tr}}\in\Z[v]^{1\times n}$
denotes the $k$-th column of $B'$, for $1\leq k\leq n$, 
then $\gcd(\widehat u_k^{\prime})\in\Z[v]$ is palindromic.
\end{sub}

\subT{The product $B^{-1}\cdot B'$}
In combination the above yields
$$ v^m\cdot (B^{-1}\cdot B')^\ast 
= v^m\cdot (B^\ast)^{-1}\cdot (B')^\ast 
= (D^{-1}\cdot U\cdot B)^{-1}\cdot (D^{-1}\cdot U\cdot B')
= B^{-1}\cdot B' .$$
Hence the non-zero entries of $B^{-1}\cdot B'$ are palindromic. 

Letting $0\neq b\in\Z$ and
$\widehat B\in\Z[v]^{n\times n}$ primitive such that 
$B^{-1}=\frac{1}{b\cdot\exp(B)}\cdot\widehat B$,
we get 
$$ b\cdot\exp(B)\cdot B^{-1}\cdot B'
=\widehat B\cdot B'=c\cdot P ,$$
where $P\in\Z[v]^{n\times n}$ is a primitive Gram matrix, 
and $0\neq c\in\Z[v]$.
In particular, since by Observation \ref{obs2} 
the exponent $\exp(B)$ is palindromic,
we conclude that the non-zero entries of $\widehat B\cdot B'$ 
are palindromic as well.
 
Moreover, letting $\widetilde m=m_{\exp(B)}\in\N_0$
such that $v^{\widetilde m}\cdot\exp(B)^\ast=\exp(B)$, we get
$$ v^{m+\widetilde m}\cdot(b\cdot\exp(B)\cdot B^{-1}\cdot B')^\ast
= b\cdot\exp(B)\cdot B^{-1}\cdot B'\in\Z[v]^{n\times n} .$$
Hence from $v^{m_P}\cdot P^\ast=P$,
where $m_P\in\N_0$ is as in Proposition \ref{palindromic}, we get 
$$ m_P \leq m+\widetilde m=m_{u'_1}+m_{\exp(B)} ,$$ 
providing an upper bound on the degrees of the non-zero entries of $P$.   
\end{sub}

\absT{The final product}\label{final}
We are now prepared to do the last computational steps. To do so,
we could quite straightforwardly compute first the inverse $B^{-1}$,
that is essentially $\widehat B$, and then the product $\widehat B\cdot B'$.
But it will substantially add to the efficiency if we keep the degrees
of the non-zero entries of the matrices involved as small as possible.
Now we have already observed above that the rows of $B$ and $B'$ are
far from being primitive, and it turns out in practice that this 
also holds for their columns. We take advantage of this as follows:

Keeping the notation of Section \ref{stdbasprop}, let
$R:=\operatorname{diag}[v^{d_1},\ldots,v^{d_n}]\in\Z[v]^{n\times n}$.
Then the rows of $R^{-1}\cdot B\in\Z[v]^{n\times n}$ are primitive.
As for its columns, 
letting $\widetilde u_k^{\operatorname{tr}}\in\Z[v]^{1\times n}$
denote the $k$-th column of $R^{-1}\cdot B$, for $1\leq k\leq n$, let 
$$ C:=\operatorname{diag}[\gcd(\widetilde u_1),\ldots,\gcd(\widetilde u_n)]
    \in\Z[v]^{n\times n} .$$
Since by Observation \ref{obs2} the polynomial $\gcd(\widehat u_k)$ 
is palindromic, using the particular form of $R$, we conclude that the 
$\gcd(\widetilde u_k)$ are palindromic as well. 
We let $0\neq\widehat c\in\Z[v]$ and $\widehat C\in\Z[v]^{n\times n}$ 
be primitive such that $C^{-1}=\frac{1}{\widehat c}\cdot\widehat C$.
The latter are of course straightforwardly computed, where both 
$\widehat c$ and the diagonal entries of $\widehat C$ are palindromic.

Then we get $\widetilde B\in\Z[v]^{n\times n}$ such that 
$B=R\cdot\widetilde B\cdot C$, where now all the rows and all the columns
of $\widetilde B$ are primitive.
We use the algorithm in Section \ref{polinverse} to compute 
$0\neq\widehat b\in\Z[v]$ and $\widehat B\in\Z[v]^{n\times n}$ primitive 
such that $\widetilde B^{-1}=\frac{1}{\widehat b}\cdot\widehat B$,
Since by Observation \ref{obs2} the exponent $\exp(B)$ is palindromic,
using the particular form of $R$ and $C$, 
we conclude that $\widehat b$ is palindromic as well.
Thus altogether we have
$$ B^{-1}=\frac{1}{\widehat b\cdot \widehat c}\cdot 
   \widehat C\cdot\widehat B\cdot R^{-1} .$$

Similarly, let 
$R':=\operatorname{diag}[v^{d'_1},\ldots,v^{d'_n}]\in\Z[v]^{n\times n}$ and 
$$ C':=\operatorname{diag}[\gcd(\widetilde u'_1),\ldots,\gcd(\widetilde u'_n)]
       \in\Z[v]^{n\times n} ,$$
where $\widetilde u_k^{\prime\operatorname{tr}}\in\Z[v]^{1\times n}$
denotes the $k$-th column of $(R')^{-1}\cdot B'$, for $1\leq k\leq n$.
As above, using Observation \ref{obs4} implying the palindromicity of
$\gcd(\widehat u'_k)$, we conclude that the diagonal entries
of $C'$ are palindromic as well, and thus those of $(C')^{-1}$ are too.
Then we get $\widetilde B'\in\Z[v]^{n\times n}$
such that $B'=R'\cdot\widetilde B'\cdot C'$, where now all the rows and all 
the columns of $\widetilde B$ are primitive.

In combination this yields
$$ Q:= \widehat b\cdot \widehat c\cdot B^{-1}\cdot B'
=\widehat C\cdot\widehat B\cdot R^{-1}\cdot R'\cdot\widetilde B'\cdot C' .$$
By the above considerations we conclude that the non-zero entries of $Q$
are palindromic, which entails that those of 
$\widehat B\cdot R^{-1}\cdot R'\cdot\widetilde B'$ are as well. 
Now by Observation \ref{obs3} we have $R'=R$, hence this simplifies to 
$$ Q = \widehat C\cdot(\widehat B\cdot\widetilde B')\cdot C'
   \in\Z[v]^{n\times n} ,$$
where the non-zero entries of 
$\widehat B\cdot\widetilde B'\in\Z[v]^{n\times n}$ are palindromic.
\end{abs}

\subT{In practice}
To find $Q$, finally, we apply the matrix multiplication algorithm
in Section \ref{polproduct} to compute the product 
$\widehat B\cdot\widetilde B'$. 
As was already mentioned, in order to 
apply it efficiently we need good break conditions to discard 
erroneous guesses quickly: Apart from requiring that rational number
recovery, see Section \ref{ratrecog}, returns only integral coefficients
but not rational ones, it turns out that checking for palindromicity
is highly effective in this respect. 

Having found a good candidate for 
$\widehat B\cdot\widetilde B'\in\Z[v]^{n\times n}$,
multiplying with the diagonal matrices $\widehat C\in\Z[v]^{n\times n}$ 
and $C'\in\Z[v]^{n\times n}$ is straightforward.
Note that, since the result is expected to be a symmetric matrix,
it is sufficient to compute only the lower triangular half
of the product.
Thus we get a candidate for a primitive Gram matrix $P$ from 
$Q=\gcd(Q)\cdot P\in\Z[v]^{n\times n}$.
(In many cases $Q$ already is primitive, but this does not 
happen always, in which cases $\gcd(Q)$ typically has a 
smallish degree.)

As independent verification we of course just explicitly check 
whether the candidate $P$ fulfills the condition
$$ \fX^\lambda(vT_s)\cdot P=P\cdot\fX^\lambda(vT_s)^{\operatorname{tr}}
\in\Z[v]^{n\times n}
\quad\textrm{for all}\quad 
s\in S .$$
\end{sub}

\section{Timings}\label{sec:timings}

We conclude by providing running times and workspace requirements for our
computations in types $E_7$ and $E_8$, and by presenting an explicit 
example for type $E_6$.  

\absT{Timings} \label{timings}
In Table \ref{timingstbl}, we give the running time (on a single
processor running at a clock speed of $3.5\textrm{GHz}$) and 
\GAP{} workspace requirements needed to compute
primitive Gram matrices for types $E_7$ and $E_8$,
and the irreducible $W$-graph representations 
of $\cH_K$ given in \cite{How}, \cite{HowYin}.
The figures for $E_7$ should be compared with those 
given in Section \ref{wgr1a} for the approach used there.
Recalling that in \cite[Remark 4.10]{gemu}
degree $2500$ was the limit of feasibility, in Table \ref{timingstbl2}
we present the resources now needed for the individual 
representations of degree at least $2500$,
where for comparison we repeat the first three columns of the 
relevant part of Table \ref{Mmaxd}.

Finally, in Table \ref{timingstbl3} we give some details about the
various steps in the computation for the unique representation of
largest degree, which is labeled by $7168_w$.
In the two last columns we indicate the actual size of
the object under consideration in the \GAP{} workspace, and 
the disc space needed to store it (as an uncompressed text file), 
respectively; the difference is accounted for by the
space consumption of the data structure we are using within \GAP{},
where matrices with polynomial entries are kept as 
lists of lists of (short) lists of (small long) integers.
In particular, in the workspace needed to compute the product,
next to the matrices $\widehat B$ and $\widetilde B'$ and 
(the lower triangular half of) the product $\widehat B\cdot\widetilde B'$, 
we also keep various specializations of the right hand factor
$\widetilde B'$, which have a cumulative size of $7.1\textrm{GB}$.
Hence to compute a primitive Gram matrix for the representation
labeled by $7168_w$ we need a running time of
$1183\textrm{min}\sim 20\textrm{h}$ and a workspace of size 
$31.5\textrm{GB}$.  
\end{abs}

\begin{table}\caption{Time and space consumption}\label{timingstbl}
$$ \begin{array}{|ccc|rr|}
\hline
\rule{2em}{0em} & \rule{1.5em}{0em}\textrm{degree}\rule{1.5em}{0em} 
                & \rule{0.5em}{0em}\textrm{no.~repr.}\rule{0.5em}{0em} 
                & \rule{2em}{0em}\textrm{time} 
                & \rule{1em}{0em}\textrm{workspace}\rule{0em}{1em} \\
\hline
\hline
E_7 & \textrm{all} & 60 & 4\textrm{min} & 0.2\textrm{GB} \rule{0em}{1em} \\
\hline
\hline
E_8 & \leq 1000 & 50 & 30\textrm{min} & 0.7\textrm{GB} \rule{0em}{1em} \\
    & 1000\textrm{---}2000 & 20 & 137\textrm{min} 
    & 2.2\textrm{GB} \rule{0em}{1em} \\
    & 2000\textrm{---}2500 & 10 & 329\textrm{min} 
    & 4.3\textrm{GB} \rule{0em}{1em} \\
    & 2500\textrm{---}3000 & 5 & 350\textrm{min} 
    & 5.9\textrm{GB} \rule{0em}{1em} \\
    & 3000\textrm{---}4000 & 7 & 874\textrm{min} 
    & 11.6\textrm{GB} \rule{0em}{1em} \\
    & 4000\textrm{---}5000 & 13 & 3175\textrm{min} 
    & 16.3\textrm{GB} \rule{0em}{1em} \\
    & 5000\textrm{---}7000 & 6 & 2784\textrm{min} 
    & 23.2\textrm{GB} \rule{0em}{1em} \\
    & \geq 7000 & 1 & 1183\textrm{min} & 31.5\textrm{GB} \rule{0em}{1em} \\
\hline
\end{array} $$
\end{table}

\begin{table}\caption{Time and space consumption for degree $\geq 2500$}
\label{timingstbl2}
$$ \begin{array}{|ccc|rr|}
\hline
\rule{1em}{0em}E_8\rule{1em}{0em} 
    & \rule{0.5em}{0em}m_P\rule{0.5em}{0em} 
    & \rule{1em}{0em}\textrm{abs.val.}\rule{1em}{0em} 
    & \rule{2em}{0em}\textrm{time} 
    & \rule{1em}{0em}\textrm{workspace}\rule{0em}{1em} \\
\hline
\hline
2688_y & 24 & 169180 & 39\textrm{min} & 3.9\textrm{GB}\rule{0em}{1em}\\ 
2800_z & 20 & 38038 & 61\textrm{min} & 3.7\textrm{GB}\rule{0em}{1em}\\ 
2800_z' & 30 & 882222 & 116\textrm{min} & 5.9\textrm{GB}\rule{0em}{1em}\\ 
2835_x & 24 & 1344484 & 52\textrm{min} & 3.1\textrm{GB}\rule{0em}{1em}\\ 
2835_x' & 32 & 5391418 & 82\textrm{min} & 5.3\textrm{GB}\rule{0em}{1em}\\
\hline
\hline
3150_y & 26 & 6166994 & 72\textrm{min} & 5.8\textrm{GB}\rule{0em}{1em}\\ 
3200_x & 24 & 266284 & 79\textrm{min} & 4.9\textrm{GB}\rule{0em}{1em}\\ 
3200_x' & 30 & 587345 & 104\textrm{min} & 6.1\textrm{GB}\rule{0em}{1em}\\ 
3240_z & 16 & 25586 & 60\textrm{min} & 4.0\textrm{GB}\rule{0em}{1em}\\ 
3240_z' & 48 & 33653538&326\textrm{min}&11.6\textrm{GB}\rule{0em}{1em}\\ 
3360_z & 20 & 29722 & 74\textrm{min} & 5.1\textrm{GB}\rule{0em}{1em}\\ 
3360_z' & 32 & 775084 & 159\textrm{min} & 8.1\textrm{GB}\rule{0em}{1em}\\
\hline
\hline
4096_x & 22 & 531634 & 156\textrm{min} & 8.0\textrm{GB}\rule{0em}{1em}\\ 
4096_x' & 44 & 234956568&392\textrm{min}&16.0\textrm{GB}\rule{0em}{1em}\\ 
4096_z & 22 & 531634 & 143\textrm{min} & 8.1\textrm{GB}\rule{0em}{1em}\\ 
4096_z' & 44 &234956568&428\textrm{min}&16.1\textrm{GB}\rule{0em}{1em}\\ 
4200_y & 28 & 58249760 & 171\textrm{min}&10.1\textrm{GB}\rule{0em}{1em}\\ 
4200_x & 24 & 5413484 & 171\textrm{min} & 9.8\textrm{GB}\rule{0em}{1em}\\ 
4200_x' & 36 & 129331224&277\textrm{min}&13.3\textrm{GB}\rule{0em}{1em}\\ 
4200_z & 26 & 728053 &183\textrm{min} & 10.4\textrm{GB}\rule{0em}{1em}\\ 
4200_z' & 28 & 1298612 &199\textrm{min}&10.3\textrm{GB}\rule{0em}{1em}\\ 
4480_y & 32 &85556320920&239\textrm{min}&13.9\textrm{GB}\rule{0em}{1em}\\ 
4536_y & 28 & 3887856 & 180\textrm{min} &11.7\textrm{GB}\rule{0em}{1em}\\ 
4536_z & 24 & 2728756 & 217\textrm{min}&11.4\textrm{GB}\rule{0em}{1em}\\ 
4536_z' & 38 & 50779421&419\textrm{min}&16.3\textrm{GB}\rule{0em}{1em}\\ 
\hline
\hline
5600_w & 26 & 372230 & 331\textrm{min} & 16.6\textrm{GB}\rule{0em}{1em}\\ 
5600_z & 26 & 3115126 & 335\textrm{min}&15.4\textrm{GB}\rule{0em}{1em}\\ 
5600_z' & 30 & 3848044 &473\textrm{min}&17.5\textrm{GB}\rule{0em}{1em}\\ 
5670_y & 30 & 10762741 & 351\textrm{min}&21.7\textrm{GB}\rule{0em}{1em}\\ 
6075_x & 26 & 894864 & 542\textrm{min} & 19.5\textrm{GB}\rule{0em}{1em}\\ 
6075_x' & 34 & 10488013 &752\textrm{min}&23.2\textrm{GB}\rule{0em}{1em}\\ 
\hline
\hline
7168_w & 32&1190470476&1183\textrm{min}&31.5\textrm{GB}\rule{0em}{1em}\\ 
\hline
\end{array} $$
\end{table}

\begin{table}\caption{Time and space consumption for $7168_w$}
\label{timingstbl3}
$$ \begin{array}{|c|rr|rr|}
\hline
\rule{0.5em}{0em} 7168_w \rule{0.5em}{0em} 
 & \rule{2em}{0em} \textrm{time} 
 & \rule{1em}{0em} \textrm{workspace} & \rule{2em}{0em} \textrm{space} 
 & \rule{2em}{0em} \textrm{disc} \rule{0em}{1em} \\
\hline
\hline
\mathfrak T & 9\textrm{min} & 0.6\textrm{GB} & & \rule{0em}{1em} \\
u'_1 & 5\textrm{min} & 1.3\textrm{GB} \rule{0em}{1em} & & \\
\widehat B & 925\textrm{min} & 7.6\textrm{GB} & 1.7\textrm{GB} 
           & 0.3\textrm{GB} \rule{0em}{1em} \\
\widetilde B' & 29\textrm{min} & 17.5\textrm{GB} & 12.6\textrm{GB}
              & 4.7\textrm{GB} \rule{0em}{1em} \\
\widehat B\cdot\widetilde B' & 207\textrm{min} & 31.5\textrm{GB} 
         & 5.8\textrm{GB} & 2.4\textrm{GB} \rule{0em}{1em} \\
P & 8\textrm{min} & 7.9\textrm{GB} & 5.8\textrm{GB} 
  & 2.5\textrm{GB} \rule{0em}{1em} \\
\hline
\end{array} $$
\end{table}

\absT{An explicit example}\label{gramex}
We conclude by revisiting the (tiny) example already presented 
in \cite[Example 4.9]{gemu} (which of course in practice runs
in a fraction of a second):
Let $W$ be of type $E_6$ with Dynkin diagram

\begin{center}\small
\begin{picture}(200,30)
\put(60, 25){\circle*{5}}
\put(58, 30){$s_1$}
\put(60, 25){\line(1,0){20}}
\put(80, 25){\circle*{5}}
\put(78, 30){$s_3$}
\put(80, 25){\line(1,0){20}}
\put(100, 25){\circle*{5}}
\put(98, 30){$s_4$}
\put(100, 25){\line(0,-1){20}}
\put(100, 05){\circle*{5}}
\put(98, -4){$s_2$}
\put(100, 25){\line(1,0){20}}
\put(120, 25){\circle*{5}}
\put(118, 30){$s_5$}
\put(120, 25){\line(1,0){20}}
\put(140, 25){\circle*{5}}
\put(138, 30){$s_6$}
\end{picture}
\end{center}

We consider the irreducible $W$-graph representation of $\cH_K$,
see \cite{Naruse0}, labeled by the representation $10_s$ of $\Q[W]$,
which is the unique one of degree $10$, see Table \ref{Mmaxd0}. 
The $W$-graph in question is depicted in \cite[Example 4.9]{gemu}, hence we 
do not repeat it here. But to illustrate the shape, and in particular
the sparseness of the representing matrices for the generators 
$vT_{s_1},\ldots,vT_{s_6}$ we present a few of them:

{\small
$$ vT_1\mapsto\begin{bmatrix}
v^2&.&.&v&.&.&.&.&v&. \\
.&v^2&.&v&.&.&.&.&.&v \\
.&.&-1&.&.&.&.&.&.&. \\
.&.&.&-1&.&.&.&.&.&. \\
.&.&.&.&-1&.&.&.&.&. \\
.&.&v&.&.&v^2&.&.&.&. \\
.&.&.&.&.&.&v^2&.&v&v \\
.&.&.&.&v&.&.&v^2&.&. \\
.&.&.&.&.&.&.&.&-1&. \\
.&.&.&.&.&.&.&.&.&-1 \\
\end{bmatrix}
\quad vT_6\mapsto
\begin{bmatrix}
v^2&.&.&.&.&.&.&v&v&. \\
.&v^2&.&.&.&.&.&v&.&v \\
.&.&v^2&.&.&.&.&.&v&v \\
.&.&.&v^2&v&.&.&.&.&. \\
.&.&.&.&-1&.&.&.&.&. \\
.&.&.&.&.&v^2&v&.&.&. \\
.&.&.&.&.&.&-1&.&.&. \\
.&.&.&.&.&.&.&-1&.&. \\
.&.&.&.&.&.&.&.&-1&. \\
.&.&.&.&.&.&.&.&.&-1 \\
\end{bmatrix} $$ }


As it turns out, there are $22$ possible choices of a distinguished subset 
$J\subseteq S$. We choose $J:=\{s_1,s_2,s_3,s_5,s_6\}$, 
in accordance with \cite[Table C.4]{gepf}. Then associated primitive
seed vectors $u_1$ and $u'_1$ are as given below,
in the first row of the matrices $B$ and $\widetilde B'$, respectively.
Running the standard basis algorithm on the specialization of the 
above $W$-graph representation with respect to $v\mapsto 1$ yields
the following Schreier tree $\mathfrak T$, which we depict as an
oriented graph, whose vertices $1,\ldots,10$ correspond to the
vectors in the (ordered) standard bases, and where an arrow from vertex $j$
to vertex $i$ with label $s_k$ says that $[j,s_k]$ is the $i$-th 
entry of $\mathfrak T$:

\begin{center}\small
\begin{picture}(125,60)
\put(-2,30){$1$}
\put(0,25){\circle*{5}}
\put(2,25){\vector(1,0){21}}
\put(7,19){$s_4$}
\put(27,30){$2$}
\put(25,25){\circle*{5}}
\put(27,25){\vector(1,0){21}}
\put(32,19){$s_2$}
\put(23,-12){$5$}
\put(25,0){\circle*{5}}
\put(25,23){\vector(0,-1){21}}
\put(16,10){$s_5$}
\put(23,55){$4$}
\put(25,50){\circle*{5}}
\put(25,27){\vector(0,1){21}}
\put(16,37){$s_3$}
\put(48,55){$8$}
\put(50,50){\circle*{5}}
\put(27,50){\vector(1,0){21}}
\put(32,44){$s_5$}
\put(48,30){$3$}
\put(50,25){\circle*{5}}
\put(52,25){\vector(1,0){21}}
\put(57,19){$s_3$}
\put(48,-12){$7$}
\put(50,0){\circle*{5}}
\put(50,23){\vector(0,-1){21}}
\put(41,10){$s_5$}
\put(73,30){$6$}
\put(75,25){\circle*{5}}
\put(77,25){\vector(1,0){21}}
\put(82,19){$s_5$}
\put(98,30){$9$}
\put(100,25){\circle*{5}}
\put(102,25){\vector(1,0){21}}
\put(107,19){$s_4$}
\put(120,30){$10$}
\put(125,25){\circle*{5}}
\end{picture}
\vspace*{0.7em}
\end{center}

We find the standard basis $\mathfrak B$ with associated matrix $B$
as shown below. (It is not always the case that the entries of $B$
are only monomials.) Hence we have 
$R=\operatorname{diag}[v^{d_1},\ldots,v^{d_{10}}]$, where 
$[d_1,\ldots,d_{10}]=[0,1,2,2,2,3,3,3,4,5]=[l(w_1),\ldots,l(w_{10})]$,
and $C$ is the identity matrix. Thus we get the matrix $\widetilde B$, 
and from that $\widehat b=1$ and the matrix $\widehat B$ as also shown below.
Note that the entries of $\widehat B$ are not necessarily palindromic or 
skew-palindromic, and that the maximum degree of the non-zero entries of 
$B$, $\widetilde B$ and $\widehat B$ equals $8$, $3$ and $5$, respectively:

{\small
$$ B= \begin{bmatrix}
.&.&.&.&.&.&.&.&.&1 \\
.&.&.&.&v&.&.&.&.&v^2 \\
.&.&.&.&v^3&.&.&.&v^2&. \\
.&.&.&.&v^3&.&.&v^2&.&. \\
.&.&.&v^2&v^3&.&.&.&.&. \\
.&.&.&.&v^5&.&v^3&v^4&v^4&v^4 \\
.&.&v^3&v^4&v^5&.&.&.&v^4&v^4 \\
.&v^3&.&v^4&v^5&.&.&v^4&.&v^4 \\
.&v^5&v^5&v^6&v^7&v^4&v^5&v^6&v^6&v^6 \\
v^5&v^7&v^7&v^6&.&v^6&v^7&v^6&v^6&v^8 \\
\end{bmatrix}
\quad 
\widetilde B= \begin{bmatrix} 
.&.&.&.&.&.&.&.&.&1 \\
.&.&.&.&1&.&.&.&.&v \\
.&.&.&.&v&.&.&.&1&. \\
.&.&.&.&v&.&.&1&.&. \\
.&.&.&1&v&.&.&.&.&. \\
.&.&.&.&v^2&.&1&v&v&v \\
.&.&1&v&v^2&.&.&.&v&v \\
.&1&.&v&v^2&.&.&v&.&v \\
.&v&v&v^2&v^3&1&v&v^2&v^2&v^2 \\
1&v^2&v^2&v&.&v&v^2&v&v&v^3 \\
\end{bmatrix} $$
$$ \widehat B= \begin{bmatrix}
2v^5-3v^3&-2v^4+3v^2&v^3-v&v^3-v&v^3-v&.&.&.&-v&1 \\
-v^3-v&v^2&.&-v&-v&.&.&1&.&. \\
-v^3-v&v^2&-v&.&-v&.&1&.&.&. \\
v^2&-v&.&.&1&.&.&.&.&. \\
-v&1&.&.&.&.&.&.&.&. \\
v^4+2v^2&-v^3&v^2&v^2&v^2&-v&-v&-v&1&. \\
-v^3-v&v^2&-v&-v&.&1&.&.&.&. \\
v^2&-v&.&1&.&.&.&.&.&. \\
v^2&-v&1&.&.&.&.&.&.&. \\
1&.&.&.&.&.&.&.&.&. \\
\end{bmatrix} $$ }


Similarly, we find the standard basis $\mathfrak B'$ 
with associated matrix $B'$. As it turns out we indeed have $R'=R$, 
and $C'$ is the identity matrix. This yields the matrix $\widetilde B'$
as shown below. Note that the entries of $\widetilde B'$ are not necessarily
palindromic or skew-palindromic, and that the maximum degree of the
non-zero entries of $\widetilde B'$ is $9$:

{\small 
$$ \widetilde B'= \left[\begin{array}{ccccc}
2v^3&v^5+2v^3+v&v^5+2v^3+v&-v^4-v^2&v^5+2v^3+v \\
-2v^2&v^6-v^2&v^6-v^2&v^3+v&-v^4-2v^2-1 \\
-v^5+v&-2v^5&-v^5+v&v^6+v^4&-v^7-2v^5-v^3 \\
-v^5+v&-v^5+v&-2v^5&v^6+v^4&-v^7-2v^5-v^3 \\
-v^5+v&-v^5+v&-v^5+v&-v^2-1&-v^7-2v^5-v^3 \\
2v^4&2v^4&2v^4&v^7-v^5&-v^8+v^4 \\
2v^4&2v^4&v^4-1&-v^5-v^3&-v^8+v^4 \\
2v^4&v^4-1&2v^4&-v^5-v^3&-v^8+v^4 \\
v^7+v^5-v^3+v&-2v^3&-2v^3&-v^6+v^4&-v^9+v^7-v^5-v^3 \\
-v^6-v^4+v^2-1&-2v^6&-2v^6&v^5-v^3&v^8-v^6+v^4+v^2 \\
\end{array}\right. $$
$$ \left.\begin{array}{ccccc}
-v^4-v^2&v^5+2v^3+v&-v^4-v^2&-v^4-v^2&-v^6-2v^4-2v^2-1 \\
-v^5+v^3&v^6-v^2&v^3+v&v^3+v&-v^7-v^5 \\
v^4-v^2&-v^5+v&v^6+v^4&-v^2-1&v^6+v^4 \\
v^4-v^2&-v^5+v&-v^2-1&v^6+v^4&v^6+v^4 \\
v^4-v^2&-2v^5&v^6+v^4&v^6+v^4&v^6+v^4 \\
-v^3+v&v^4-1&-v^5-v^3&-v^5-v^3&-v^5-v^3 \\
-v^3+v&2v^4&v^7-v^5&-v^5-v^3&-v^5-v^3 \\
-v^3+v&2v^4&-v^5-v^3&v^7-v^5&-v^5-v^3 \\
v^2-1&-2v^3&-v^6+v^4&-v^6+v^4&v^4+v^2 \\
v^7+v^5&-2v^6&v^5-v^3&v^5-v^3&-v^9+v^7 \\
\end{array}\right] $$ }

From this we get $Q=\widehat B\cdot\widetilde B'$. As it turns out
we already have $\gcd(Q)=1$, thus we may let $P=-Q$ be as shown below.
Indeed, independent verification shows that $P$ is a primitive
Gram matrix as desired, coinciding with the one already given in 
\cite[Example 4.9]{gemu}. Note that indeed $P$ is a completely dense 
matrix, all of whose entries are $6$-palindromic, where the maximum 
degree occurring is $6$, and that in accordance with Table \ref{Mmaxd0}
the largest coefficient occurring has absolute value $3$, and that 
the specialization $v\mapsto 0$ yields the identity matrix:

{\small
$$ \left[\begin{array}{ccccc}
v^6+3v^4+3v^2+1&2v^4+2v^2&2v^4+2v^2&-v^5-2v^3-v&2v^4+2v^2 \\
2v^4+2v^2&v^6+3v^4+3v^2+1&2v^4+2v^2&-v^5-2v^3-v&2v^4+2v^2 \\
2v^4+2v^2&2v^4+2v^2&v^6+3v^4+3v^2+1&-v^5-2v^3-v&2v^4+2v^2 \\
-v^5-2v^3-v&-v^5-2v^3-v&-v^5-2v^3-v&v^6+2v^4+2v^2+1&-v^5-2v^3-v \\
2v^4+2v^2&2v^4+2v^2&2v^4+2v^2&-v^5-2v^3-v&v^6+3v^4+3v^2+1 \\
-v^5-2v^3-v&-v^5-2v^3-v&-v^5-2v^3-v&v^4+v^2&-2v^3 \\
2v^4+2v^2&2v^4+2v^2&2v^4+2v^2&-2v^3&2v^4+2v^2 \\
-v^5-2v^3-v&-v^5-2v^3-v&-2v^3&v^4+v^2&-v^5-2v^3-v \\
-v^5-2v^3-v&-2v^3&-v^5-2v^3-v&v^4+v^2&-v^5-2v^3-v \\
-2v^3&-v^5-2v^3-v&-v^5-2v^3-v&v^4+v^2&-v^5-2v^3-v \\
\end{array}\right. $$ 
$$ \left.\begin{array}{ccccc}
-v^5-2v^3-v&2v^4+2v^2&-v^5-2v^3-v&-v^5-2v^3-v&-2v^3 \\
-v^5-2v^3-v&2v^4+2v^2&-v^5-2v^3-v&-2v^3&-v^5-2v^3-v \\ 
-v^5-2v^3-v&2v^4+2v^2&-2v^3&-v^5-2v^3-v&-v^5-2v^3-v \\
v^4+v^2&-2v^3&v^4+v^2&v^4+v^2&v^4+v^2 \\
-2v^3&2v^4+2v^2&-v^5-2v^3-v&-v^5-2v^3-v&-v^5-2v^3-v \\
v^6+2v^4+2v^2+1&-v^5-2v^3-v&v^4+v^2&v^4+v^2&v^4+v^2 \\
-v^5-2v^3-v&v^6+3v^4+3v^2+1&-v^5-2v^3-v&-v^5-2v^3-v&-v^5-2v^3-v \\
v^4+v^2&-v^5-2v^3-v&v^6+2v^4+2v^2+1&v^4+v^2&v^4+v^2 \\
v^4+v^2&-v^5-2v^3-v&v^4+v^2&v^6+2v^4+2v^2+1&v^4+v^2 \\
v^4+v^2&-v^5-2v^3-v&v^4+v^2&v^4+v^2&v^6+2v^4+2v^2+1 \\
\end{array}\right] $$ }
\end{abs}


\end{document}